\def \bbR{\mathbb{R}}
\def \R{\mathbb{R}}
\def \P{\mathbb{P}}
\def \E{I\!\!E}
\def \bf{\textbf}
\def \it{\textit}
\def \cB{\mathcal{B}}
\newcommand{\dtmur}{\widetilde\mu(dr,de)}
\DeclareMathSymbol{\leqslant}{\mathalpha}{AMSa}{"36} 
\DeclareMathSymbol{\geqslant}{\mathalpha}{AMSa}{"3E} 
\renewcommand{\leq}{\;\leqslant\;}                   
\renewcommand{\geq}{\;\geqslant\;}                   
\newcommand{\integ}[2]{\displaystyle \int_{#1}^{#2}}
\newcommand{\beq}{\begin{eqnarray}}
\newcommand{\eeq}{\end{eqnarray}}
\newcommand{\beqn}{\begin{eqnarray*}}
\newcommand{\eeqn}{\end{eqnarray*}}
\newtheorem{theorem}{Theorem}
\newtheorem{lemma}{Lemma}
\newtheorem{proposition}{Proposition}
\newtheorem{definition}{Definition}
\newtheorem{corollary}{Corollary}
\newtheorem{remark}{Remark}
\begin{document}
\begin{frontmatter}
\title{The  obstacle problem for semilinear parabolic partial integro-differential
equations}
\date{}
\runtitle{}

\author{\fnms{Anis}
 \snm{MATOUSSI}\corref{}\ead[label=e1]{anis.matoussi@univ-lemans.fr}}
 \thankstext{t3}{Research partly supported by the Chair {\it Financial Risks} of the {\it Risk Foundation} sponsored by Soci\'et\'e G\'en\'erale, the Chair {\it Derivatives of the Future} sponsored by the {F\'ed\'eration Bancaire Fran\c{c}aise}, and the Chair {\it Finance and Sustainable Development} sponsored by EDF and Calyon }
\address{
 Universit\'e du Maine \\
 Institut du Risque et de l'Assurance\\
Laboratoire Manceau de Math\'ematiques\\\printead{e1}
 }

\author{\fnms{Wissal}
 \snm{SABBAGH}\corref{}\ead[label=e2]{wissal.sabbagh.etu@univ-lemans.fr}}
\address{Universit\'e du Maine\\
 Institut du Risque et de l'Assurance\\
 Laboratoire Manceau de Math\'ematiques\\
\printead{e2}
}

\author{\fnms{Chao}
 \snm{ZHOU}\corref{}\ead[label=e3]{matzc@nus.edu.sg}}
\address{National University of Singapore \\
Department of Mathematics\\
\printead{e3}
}


\runauthor{A. Matoussi, W. Sabbagh, C. Zhou}

\vspace{3mm}

%
%

\begin{abstract} 
 This paper presents a probabilistic interpretation for the  weak Sobolev solution of the obstacle problem for semilinear parabolic partial integro-differential equations (PIDEs). 
 The results of L\'eandre \cite{l} concerning the homeomorphic property for the solution of SDEs with jumps are used to construct random test functions for the variational equation for such PIDEs. This results in the natural connection with the associated Reflected Backward Stochastic Differential Equations with jumps (RBSDEs), namely Feynman Kac's formula for the solution of the PIDEs. Moreover it gives an application to the pricing and hedging of contingent claims with constraints in the wealth or portfolio processes in financial markets including jumps.\\

\bf{MSC}: 60H15; 60G46; 35R60\\
\bf{Keywords}: Reflected backward stochastic differential equation, partial
parabolic integro-differential equations, jump diffusion process, obstacle problem, stochastic flow, flow of
diffeomorphisms.
\end{abstract}

\end{frontmatter}

\section{Introduction}
Our main interest is to study the 
following partial integro-differential equations (in short PIDEs)
of parabolic type:
\begin{equation}\label{pde1}
\begin{array}{ll}
(\partial_t+\mathcal L)u(t,x)+f(t,x,u(t,x),\sigma^\ast\nabla
u(t,x),u(t,x+\beta(x,\cdot))-u(t,x))=0\\
\end{array}
 \end{equation}
 over the time interval $[0,T]$, with a given final condition $u_T = g$, $f$ is a nonlinear function and  $\mathcal L={\mathcal K}_1+{\mathcal K}_2$ is the second order integro-differential operator associated with a jump diffusion which is defined
 component by component with
 \begin{equation}\label{operator}
 \begin{array}{lll}
 {\mathcal K}_1\varphi(x)&=&\displaystyle\sum_{i=1}^{d}b^i(x)\frac{\partial}{\partial
 x_i}\varphi(x)+\frac{1}{2}\sum_{i,j=1}^{d}a^{ij}(x)\frac{\partial^2}{\partial
 x_i\partial x_j}\varphi(x)\mbox{ and }\\
{\mathcal
K}_2\varphi(x)&=&\displaystyle\int_{\E}\Big(\varphi(x+\beta(x,e))-\varphi(x)-\sum_{i=1}^{d}\beta^i(x,e)\frac{\partial}{\partial
 x_i}\varphi(x)\Big)\lambda(de),\quad \varphi \in C^2(\bbR^d).
\end{array}
\end{equation}

This class of PIDEs appears in the pricing and hedging contigent claims in financial markets including jumps. Matache, von
Petersdorff and Schwab \cite{MPS04} have studied a particular case where $f$ is linear  in $(y,z)$ and not depends on $v$ (the jump size variable). They have shown the existence and
uniqueness of the Sobolev solution of the variational form of some
types of PIDEs, stemming from pricing problems in L\'evy markets. They
used an analytic method in order to derive a numerical schema based
on the wavelet Galerkin method.

Our nonlinear PIDEs (\ref{pde1})  include the case of pricing of contingent claims with constraints in the wealth or portfolio processes. As an example, hedging claims with higher interest rate for borrowing may be considered in a financial market with jumps. El Karoui, Peng and Quenez \cite{ElPQ97} have studied this example  in a continuous financial market where the non linear source function $f$ is given by $f(t,x,y,\tilde{z})=r_ty+\theta_t\sigma_t\tilde{z}-(R_t-r_t)(y-\sum_{i=1}^{n} \tilde{z}^i).$ \\
In the classical literature, the obstacle problem is related to the variational inequalities which were first studied by Mignot-Puel \cite{MignotPuel75}, and then by Michel Pierre \cite{Pierre, PIERRE} (see also Bensoussan-Lions \cite{BensoussanLions78} ,  Kinderleherer-Stampacchia \cite{KS80} and Bally et al  \cite{BCEF}). More recently,    
the pricing and hedging of American options in the Markovian case and the related  obstacle problem for PDEs, was studied by El Karoui et al \cite{Elk2} (see also El Karoui, Pardoux and Quenez \cite{ElkPaQ97}  and El Karoui, Hamad\`ene, Matoussi \cite{ElkMH08}) .\\
In the case where $f$  does not depend on $u$ and  $\nabla u$, , the equation \eqref{pde1} becomes a linear parabolic PDE. 
If $ h : [0,T] \times \mathbb{R}^d  \longrightarrow \mathbb{R} $ is a given function such that $ h (T,x) \leq g (x)$, we may roughly say that the solution of the obstacle problem for \eqref{pde1} is a function $ u \in \mathbf{L}^2 \big([0,T]; H^1 (\mathbb{R}^d) \big)$ such that
the following conditions are satisfied in $(0,T) \times \mathbb{R}^d$ :
{\small
\begin{equation}
\begin{split}
\label{OLPIDE}
& (i) \; \;    u \geq h , \quad dt\otimes dx - \mbox{a.e.},  \\
& (ii)\;\;  \partial_t u  +
 \;  \mathcal L  u  + f  \leq 0 \\
& (iii)\;\;   \big(u - h \big) \big( \partial_t u  +
 \; \mathcal L u  + f \big) = 0 .\\
 & (iv)  \; \;  u_T = g, \quad dx-\mbox{a.e.}
\end{split}
\end{equation}}
The relation $(ii)$ means that the distribution appearing in the LHS of the inequality is a non-positive measure. The relation
$(iii)$ is not rigourously stated. We may roughly say that one has $\partial_t u  +
 \; \mathcal L  u  + f   = 0$ on the set $ \{ u > h \}$. 
 
   In the case of the obstacle problem for PDEs (when the non local term operator $\mathcal K_2$ =0),  if one expresses the obstacle problem in terms of variational inequalities it should also be required that the solution has a minimality property (see Mignot-Puel \cite{MignotPuel75} or Bensoussan-Lions \cite{BensoussanLions78} p.250). The work of El Karoui et al \cite{Elk2} treats the obstacle problem for \eqref{pde1} within the framework of backward stochastic differential equations (BSDEs in short). Namely the equation \eqref{pde1} is considered with $f$ depending on $u$ and $\nabla u$ ,  $ \lambda = 0$ and $\beta =0$  and the obstacle $h$ is continuous. The solution is represented stochastically as a process  and the main new  object of this BSDE framework is a continuous increasing process that controls the set $\{u=h\}$. This increasing process determines in fact the measure from the relation $(ii)$.  Bally et al \cite{BCEF} (see also Matoussi and Xu \cite{MX08})  point out that the continuity of this process allows the classical notion of strong variational solution to be extended (see Theorem 2.2 of \cite{BensoussanLions78} p.238) and express the solution to the obstacle as a pair $(u, \nu)$ where $ \nu$ equals the LHS of $(ii)$ and is supported by the set  $\{u=h\}$. \\
Barles, Buckdahn and Pardoux \cite{bbp97} have provided  a  probabilistic interpretation for  the viscosity solution of (\ref{pde1})  by using 
a forward BSDE with jumps.  Situ \cite{situ} has studied  the Sobolev solution of
(\ref{pde1}) via an appropriate BSDE with jumps, whose method is
mainly based on Sobolev's embedding theorem.\\
More recently, Matoussi and Stoica \cite{MS10}  studied  the obstacle problem for parabolic quasilinear SPDEs and gave a probabilistic interpretation of  the reflected measure $\nu$  in terms of the associated increasing process which is a component of solution of reflected BSDEs. Such measures are called regular measures or Revuz measures. Their method is based on probabilistic  quasi-sure analysis. 

 Michel Pierre \cite{Pierre, PIERRE}  has studied parabolic PDEs with obstacles using the parabolic potential as a tool. He proved that the solution uniquely exists and is quasi-continuous with respect to socalled analytical capacity.  Moreover he gave a representation of the reflected measure $\nu$ in terms of the associated regular potential and the approach used  is based  on analytical quasi-sure analysis.  More recently,  Denis, Matoussi and  Zhang \cite{DMZ12} have extended the approach of  Michel Pierre \cite{Pierre, PIERRE}  
for the obstacle problem of quasilinear SPDEs.

 More precisely, our main interest is to consider the final condition to be a fixed function $g\in \mathbf{L}^2 \big(\mathbb{R}^d\big)$ and the obstacle $h$ be a  continuous function  $ h : [0,T] \times\mathbb{ R}^d \longrightarrow \mathbb{R }$. Then the obstacle problem for the equation \eqref{pde1} is defined as a  pair $ (u, \nu)$, where $ \nu$ is a regular measure concentrated on $\{u=h\}$ and $ u \in \mathbf{L}^2 \big( [0,T] \times R^d; R)\big)$ satisfies the following relations:
\small
\begin{equation}
\begin{split}
\label{OPDIE}
& (i') \; \;    u \geq h , \quad d\mathbb{P}\otimes dt\otimes dx - \mbox{a.e.},  \\
& (ii')\;\; \partial_t u (t,x) + 
 \;  \mathcal L  u (t,x)  + f(t,x,u (t,x),   \sigma^\ast\nabla
u(t,x),u(t,x+\beta(x,\cdot))-u(t,x)) = - \nu (dt,dx),  \\
& (iii')\;\;  \nu \big(u > h \big) =0, \quad a.s.,\\
 & (iv')  \; \;  u_T = g, \quad  dx-\mbox{a.e.}.
\end{split}
\end{equation}
$\nu$ represents the quantity which makes it possible to pass from inequality $(ii)$ to equality $(ii')$ and to get the uniqueness result for the obstacle problem.
In Section \ref{section:obstacle}, the rigorous  sense of the relation $(iii')$  which is based on the  probabilistic representation of the measure $\nu$  and  plays the role of  quasi-continuity of $u$ in this context will be explained. The main result of the paper is Theorem \ref{mr2} which ensures the existence and uniqueness of the solution $(u, \nu)$ of the obstacle problem for \eqref{pde1} using a probabilistic method based on reflected BSDEs with jumps. The proof is based on the penalization procedure.  It can be noted that the quasi-sure approaches for the PIDEs  (probabilistic   \cite{MS10} or analytical one  \cite{DMZ12}) are unsuccessful. It remains unclear, until now, how to define the analytical potential associated to the operator $\mathcal L$ specially for the non local operator $\mathcal K_2$. Therefore, it is not obvious how to define the associated analytical capacity. Thus, the stochastic flow method developed by Bally and Matoussi in \cite{bm} for a class of parabolic semilinear SPDEs is used in this context. \\ 
As a preliminary work,  first we present the existence  and uniqueness of Sobolev's solution of  PIDE \eqref{pde1} (without obstacle) and provide a probabilistic interpretation  by using  solution of BSDEs driven by a Brownian motion and an independent random measure.  
The concern is to solve our problem by developing a stochastic flow method based on the results of L\'eandre \cite{l} about the homeomorphic property for the solution of SDEs with jumps. The key element in \cite{bm} is to use  
the inversion of stochastic flow which transforms the variational
formulation of the PDEs to the associated BSDEs. Thus it plays the
same role as It\^o's formula in the case of the classical solution of
PDEs. 
Note that more recently, in \cite{jourdain07}  based on stochastic
flow arguments, the author shows that the probabilistic equivalent
formulation of Dupire's PDE is the Put-Call duality equality in
local volatility models including exponential L\'evy jumps. Also in
\cite{Mrad09}  and \cite{ElKM13}, the inversion of stochastic flow technics  are  used  for building
  a family of forward utilities  for a given optimal portfolio.

This paper is organized as following: in section 2, first the basic assumptions and the definitions of the solutions
for PIDEs are presented. We provide useful results on stochastic flow associated with the forward SDEs with jumps, then in this
 setting a class of  random test functions and their semimartingale decomposition are introduced. Finally, an equivalence norm result is given in the jump diffusion case.
   In section 3, we prove the existence and uniqueness results
   of the solution of our PIDEs  and give the associated probabilistic interpretation via the FBSDEs
   with jumps. The uniqueness is a consequence of the variational formulation of the PIDEs  written with random test functions and the uniqueness of the solution of the FBSDE.
    The existence of the solution is established by an approximation penalization  procedure, a priori estimates 
    and the equivalence norm results. In section 4,  we prove existence and uniqueness of  the solution of the obstacle problem for the PIDEs.  The proof of this result differs from that of Bally et al  \cite{BCEF} since we have to consider the stochastic flow associated with the forward jump diffusion process. In particular, the jump part appearing of the tightness result for the approximation measure has to be taken into account. 
    In the Appendix, we first give the proof of the equivalence norm results, then prove a regularity result for the BSDEs solution with respect to the
     time-state variable $(t,x)$, in order to relate the solution of BSDEs to the classical solution of our PIDEs. Finally, we give a proof of a  technical lemma which is crucial for the existence of the  regular measure part of the solution of our obstacle problem for PIDEs.
\section{Hypotheses and preliminaries}
\label{hypotheses}
Let  $T>0$  be a finite time horizon and $(\Omega, \mathcal{F},
\mathbb{P})$ be a complete probability space on which is defined two
independent processes:

- a $d$-dimensional Brownian motion $W_t = (W_t^1,\cdots, W_t^d)$;

- a Poisson random measure $\mu=\mu(dt,de)$ on
$([0,T]\times \E,{\mathcal B}([0,T])\otimes{\mathcal B}_{\E})$, where $\E=\R^{l}\setminus\{0\}$ is equipped with its Borel field ${\mathcal B}_{\E}$, with compensator $\upsilon(dt,de)=\lambda(de)dt$, such that $\{\tilde{\mu}([0,T]\times A)= (\mu-\upsilon)([0,T]\times A)\}_{t\geq 0}$ is a martingale for all $A\in {\mathcal B}_{\E}$ satisfying $\lambda(A)< \infty$. $\lambda$ is assumed to be a $\sigma-$ finite measure on 
$(\E,{\mathcal B}_{\E})$ satisfying 
$$ \int_{\E} (1\wedge |e|^2) \lambda(de) < +\infty$$
Denote $\widetilde{\mathcal P}={\mathcal P}\otimes{\mathcal B}_{\E}$ where $\mathcal P$ is the predictable $\sigma$-field on $\Omega\times [0,T]$.


Let $({\mathcal F}_t)_{t\geq 0}$ be the filtration generated by the
above two processes and augmented by the $P$-null sets of $\mathcal
F$. Besides let us define:
\medskip

- $|X|$ the Euclidean norm of a vector $X$;\smallskip

- $\mathbf{L}^2(\E,{\mathcal B}_{\E},\lambda; \R^n)$ (noted as
$\mathbf{L}^2_{\lambda}$ for convenience) the set of measurable functions
from $ (\E,{\mathcal B}_{\E},\lambda)$ to $\R^n$ endowed with the
topology of convergence in measure and for $v\in \mathbf{L}^2(\E,{\mathcal
B}_{\E},\lambda; \R^n)$ 
$$\begin{array}{cc}

\|v\|^2=\integ{\E}{}|v(e)|^2\lambda(de)\in \R^+\cup\{+\infty\};
\end{array}$$

-$\mathbf{L}^p_n({\mathcal F}_T)$ the space of $n$-dimensional
${\mathcal F}_T$-measurable random variables $\xi$ such that
$$\begin{array}{ll}
          \|\xi\|_{L^p}^p:=E(|\xi|^p)<+\infty;
  \end{array}$$

-${\mathcal H}^p_{n\times d}([0,T])$ the space of $\R^{n\times
d}$-valued ${\mathcal P}$-measurable process $Z=(Z_t)_{t\leq T}$
such that
$$\begin{array}{ll}
          \|Z\|_{{\mathcal H}^p}^p:=E[(\integ{0}{T}|Z_t|^2dt)^{p/2}]<+\infty;
  \end{array}$$

- ${\mathcal S}^p_n([0,T])$ the space of $n$-dimensional ${\mathcal
F}_t$-adapted c\`adl\`ag processes $Y=(Y_t)_{t\leq T}$ such that
$$\|Y\|_{{\mathcal S}^p}^p:=E[\sup_{t\leq T}|Y_t|^p]<+\infty;$$

- $\mathcal {A}^{p}_n(t,T)$ the space of $n$-dimensional ${\mathcal
F}_t$-adapted non-decreasing  c\`adl\`ag processes  $K=(K_t)_{t\leq T}$
such that
$$\|K\|_{{\mathcal A}^p}^p:=E[|K_T|^p]<+\infty;$$

-${\mathcal L}^p_n([0,T])$ the space of $\R^{n}$-valued $\widetilde
{\mathcal P}$-measurable mappings $V(\omega,t,e)$ such that
$$\begin{array}{ll}
          \|V\|_{{\mathcal L}^p}^p:=E[(\integ{0}{T}\|V_t\|^2dt)^{p/2}]
          =E[(\integ{0}{T}\int_{\E}|V_t(e)|^2\lambda(de)dt)^{p/2}]<+\infty.
  \end{array}$$

- $C^k_{l,b}(\R^p,\R^q)$ the set of $C^k$-functions which grow at most linearly at infinity and whose partial
derivatives of order less than or equal to $k$ are bounded.

- $\mathbf{L}_{\rho}^2\left( \mathbb{{R}}^d\right) $ will be the  basic Hilbert space  of our framework. We employ the usual notation for its scalar product and its norm,%
$$ \left( u,v\right)_{\rho} =\int_{\mathbb{R}^d}u\left( x\right) v\left(
x\right) \rho (x) dx,\;\left\| u\right\| _2=\left(
\int_{\mathbb{R}^d}u^2\left( x\right) \rho (x) dx\right) ^{\frac
12}. $$ where $ \rho$ is a continuous positive and integrable
weight function. We assume additionally that $\frac{1}{\rho}$ is locally integrable.
\\In general, we shall use the
notation
\[ (u,v)=\int_{\mathbb{R}^d} u(x)v(x)\, dx,\]
where $u$, $v$ are measurable functions defined in $\mathbb{R }^d$
and $uv \in \mathbf{L}^1 (\mathbb{R}^d )$.

Our evolution problem will be considered over a fixed time interval
$[0,T]$ and the norm for an element of $\mathbf{L}_{\rho}^2\left(
[0,T] \times \mathbb{{R}}^d\right) $ will be denoted by
$$\left\| u\right\| _{2,2}=\left(\int_0^T  \int_{\mathbb{R}^d} |u (t,x)|^2 \rho(x)dx dt \right)^{\frac 12}. $$
We usually omit the subscript when $n=1$. We assume the following hypotheses : \\[0.2cm]
\textbf{(A1)}  $g$ belongs to $\mathbf{L}_{\rho}^2(\R^d)$;
\\[0.2cm]
\textbf{(A2)} $f:[0,T]\times \R^d\times \R^m\times \R^{m\times
d}\times \mathbf{L}^2(\E,{\mathcal B}_{\E},\lambda; \R^m)\rightarrow\R^m$ is measurable in $(t,x,y,z,v)$ and
satisfies $ f^0 \in \mathbf{L}_{\rho}^2\left( [0,T] \times
\mathbb{{R}}^d\right) $ where $f^0 := f ( \cdot
,\cdot,0, 0,0)$.\\[0.2cm]
\textbf{(A3)} $f$ satisfies Lipschitz condition in $(y,z,v)$,
i.e., there exists a constant $C$ such that for any
$(t,x)\in[0,T]\times \R^d$ and $(y,z,v),(y',z',v')\in
\R^m\times\R^{m\times d}\times \mathbf{L}^2(\E,{\mathcal
B}_{\E},\lambda; \R^m)$:
$$|f(t,x,y,z,v)-f(t,x,y',z',v')|\leq C(|y-y'|+|z-z'|+\|v-v'\|);$$
\textbf{(A4)}  $b\in C^3_{l,b}(\R^d;\R^d)$, $\sigma\in
C^3_{l,b}(\R^d;\R^{d\times d})$, $\beta\,:\,\R^d\times \E\to \R^d$
be measurable and for all $e\in \E$, $\beta(\cdot,e)\in
C^3_{l,b}(\R^d;\R^d)$, and for some $K>0$ and for all $x\in \R^d$,
$e\in \E$,
$$|\beta(x,e)|\leq K(1\wedge|e|),\quad |D^{\alpha}\beta(x,e)|\leq
K(1\wedge|e|)\,\,\mathrm{for}\,\, 1\leq |\alpha|\leq 3,
$$
where $\alpha=(\alpha_1,\alpha_2,\cdots,\alpha_d)$ is a multi-index
and $|\alpha|=\alpha_1+\alpha_2+\cdots+\alpha_d.$ $D^{\alpha}$ is
the differential operator
$D^{\alpha}=\displaystyle\frac{\partial^{|\alpha|}}{(\partial^{\alpha_1}x_1)(\partial^{\alpha_2}x_2)\cdots(\partial^{\alpha_d}x_d)}$.


\subsection{Weak formulation for the partial differential-integral equations}
\label{definition:solution}

The space of test functions which we employ in the definition of
weak solutions of the evolution equations  \eqref{pde1} is $
\mathcal{D}_T  = \mathcal{C}^{\infty} (\left[0,T]\right) \otimes
\mathcal{C}_c^{\infty} \left(\mathbb{R}^d\right)$, where
$\mathcal{C}^{\infty} \left([0,T]\right)$ denotes the space of real
functions which can be extended as infinite differentiable functions
in the neighborhood of $[0,T]$ and $
\mathcal{C}_c^{\infty}\left(\mathbb{R}^d\right)$ is the space of
infinite differentiable functions with compact support in
$\mathbb{R}^d$. We denote the space of solutions  by
$$ {\mathcal H}_T:=\{u\in \mathbf{L}_{\rho}^2([0,T]\times \R^d) \; \big|\; \sigma^\ast\nabla u\in \mathbf{L}_{\rho}^2([0,T]\times \R^d)\} $$
endowed with the norm
$$\begin{array}{ll}
\|u\|_{{\mathcal H}_T}=
\Big(\displaystyle\int_{\mathbb{R}^d}\int_0^T[|u(s,x)|^2+|(\sigma^\ast\nabla
u)(s,x)|^2]ds\rho(x)dx \Big)^{1/2},
\end{array}
$$
where we denote the gradient by $\nabla u (t,x) = \big(\partial_1 u
(t,x), \cdot \cdot \cdot, \partial_d u (t,x) \big)$.

\begin{definition} We say that $ u \in \mathcal{H}_T $ is a Sobolev solution of PIDE  $\left(
\ref{pde1}\right) $ if the following
relation holds, for each $\phi \in \mathcal{D}_T ,$
\begin{equation}\label{wspde1}
\begin{array}{ll}
\displaystyle\int_t^T(u(s,x),\partial_s\phi(s,x))ds+(u(t,x),\phi(t,x))-(g(x),\phi(T,x))
-\int_t^T( u(s,x),\mathcal L^\ast \phi(s,x))ds
\\=\displaystyle\int_t^T(f(s,x,u(s,x),\sigma^\ast\nabla u(s,x),u(s,x+\beta(x,\cdot))-u(s,x)),\phi(s,x))ds.
\end{array}
\end{equation}
where ${\mathcal L}^\ast$ is the adjoint operator of ${\mathcal L}$.
We denote by $ u:=\mathcal{U }(g, f)$ such a solution.
\end{definition}
\subsection{Stochastic flow of diffeomorphisms and random test functions}
\label{Flow} In this section, we shall study the stochastic flow associated with the forward jump diffusion component. The main motivation is to generalize in the jump setting the flow technics which was first introduced in
\cite{bm} for the study of semilinear PDE's. Let
$(X_{t,s}(x))_{t\leq s\leq T}$ be the strong solution  of the equation:
 \begin{equation}\label{sde}
  X_{t,s}(x)=x+\integ{t}{s}b(X_{t,r}(x))dr+\integ{t}{s}\sigma(X_{t,r}(x))dW_r+\integ{t}{s}\int_{\E}
  \beta(X_{t,r-}(x),e)\dtmur.
\end{equation}
The existence and uniqueness of this solution was proved in
Fujiwara and Kunita \cite{fk}. Moreover, we have the following
properties  (see Theorem 2.2 and Theorem 2.3 in \cite{fk}):
\begin{proposition}\label{estimatesde}
For each $t>0$, there exists a version of $\{X_{t,s}(x);\,x\in
\R^d,\,s\geq t\}$ such that $X_{t,s}(\cdot)$ is a $C^2(\R^d)$-valued
c\`adl\`ag process. Moreover:

(i) $X_{t,s}(\cdot)$ and  $X_{0,s-t}(\cdot)$ have the same
distribution, $0\leq t\leq s$;

(ii) $X_{t_0,t_1},\,X_{t_1,t_2},\ldots,X_{t_{n-1}, t_n}$ are
  independent, for all $n\in \mathbb{N}$, $0\leq t_0<t_1<\cdots<t_n$;

(iii) $X_{t,r}(x)=X_{s,r} \circ X_{t,s} (x)$, $0\leq t<s<r$.

 Furthermore,
for all $p\geq 2$, there exists $M_p$ such that for all $0\leq t<s$,
$x,x'\in\R^d$, $h,h'\in\R\backslash{\{0\}}$,
$$\begin{array}{ll}
E(\sup\limits_{t\leq r\leq s}|X_{t,r}(x)-x|^p)\leq
M_p(s-t)(1+|x|^p),\\
E(\sup\limits_{t\leq r\leq s}|X_{t,r}(x)-X_{t,r}(x')-(x-x')|^p)\leq
M_p(s-t)(|x-x'|^p),\\
E(\sup\limits_{t\leq r\leq s}|\Delta_h^i[X_{t,r}(x)-x]|^p)\leq
M_p(s-t),\\
E(\sup\limits_{t\leq r\leq s}|\Delta_h^i X_{t,r}(x)-\Delta_{h'}^i
X_{t,r}(x')|^p)\leq M_p(s-t)(|x-x'|^p+|h-h'|^p),
\end{array}$$
where $\Delta_h^ig(x)=\frac{1}{h}(g(x+he_i)-g(x))$, and
$(e_1,\cdots,e_d)$ is an orthonormal basis of $\R^d$.
\end{proposition}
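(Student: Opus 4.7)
The plan is to reduce the entire statement to the theorems of Fujiwara--Kunita by verifying that the coefficients $b,\sigma,\beta$ fall within their framework. Under hypothesis \textbf{(A4)}, the drift and diffusion are globally Lipschitz with bounded derivatives up to order three, while $\beta$ is Lipschitz in $x$ uniformly in $e$ and satisfies the integrability control $|\beta(x,e)|+|D^\alpha\beta(x,e)|\leq K(1\wedge|e|)$ for $1\leq|\alpha|\leq 3$. Together with the assumption $\int_{\mathcal E}(1\wedge |e|^2)\lambda(de)<\infty$, this yields $\int_{\mathcal E}|\beta(x,e)|^2\lambda(de)<\infty$, which is exactly the square-integrability hypothesis required for the compensated jump integral in \eqref{sde}.

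First, I would revisit the standard Picard construction of $X_{t,s}(x)$ in $\mathcal S^p$: writing the jump integral with the compensated measure $\widetilde\mu$, one applies BDG for Poisson stochastic integrals, the Lipschitz bounds from \textbf{(A4)}, and Gronwall to obtain the $L^p$-moment bound $\mathbb{E}[\sup_{t\leq r\leq s}|X_{t,r}(x)-x|^p]\leq M_p(s-t)(1+|x|^p)$. Running the same argument on the difference $X_{t,r}(x)-X_{t,r}(x')$ and then on the first-order difference quotient $\Delta_h^i X_{t,r}(x)$ (which satisfies a linear SDE obtained by subtracting the equations at $x+he_i$ and $x$, and whose coefficients are bounded in view of the $C^3_{l,b}$ assumption on $b,\sigma,\beta$) furnishes the middle two inequalities of the proposition. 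Comparing $\Delta_h^i X_{t,r}(x)$ with $\Delta_{h'}^i X_{t,r}(x')$ gives the last inequality, again by BDG applied to the compensated Poisson integral and a use of the assumed uniform control $|D^{\alpha}\beta(x,e)|\leq K(1\wedge |e|)$ to make the integrand square-integrable in $e$.

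Once these four moment estimates are established for arbitrary $p\geq 2$, a $C^2(\mathbb R^d)$-valued càdlàg version exists by Kolmogorov's continuity theorem applied in the spatial variable (taking $p$ large enough to beat the dimension $d$); the càdlàg regularity in $s$ is inherited from the jump structure of the driving noise. The distributional statement (i) and the independence statement (ii) both follow from the stationarity and independence of increments of $W$ and $\widetilde\mu$ combined with strong uniqueness of \eqref{sde}. The flow property (iii), $X_{t,r}(x)=X_{s,r}\circ X_{t,s}(x)$, is again a consequence of strong uniqueness: both sides solve the same SDE on $[s,r]$ with initial condition $X_{t,s}(x)$ at time $s$. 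The main technical obstacle is the last difference-quotient estimate, since one must simultaneously control the jump integrand's dependence on both the spatial variable $x$ and the increment parameter $h$; the clean way is to write the integrand as a double telescoping sum, use the Lipschitz property of $D\beta$ in $x$ for one term and the $C^3_{l,b}$ bound on $D^2\beta$ for the other, and then close the loop with Gronwall in $r$.
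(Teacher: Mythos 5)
Your proposal is correct and matches the paper's treatment: the paper gives no proof of this proposition at all, simply invoking Theorems 2.2 and 2.3 of Fujiwara--Kunita \cite{fk} after noting that existence and uniqueness for \eqref{sde} hold under \textbf{(A4)}, which is exactly the reduction you carry out in your first paragraph. Your additional sketch of the internal argument (Picard iteration, BDG for the compensated Poisson integral using $|\beta(x,e)|,|D^\alpha\beta(x,e)|\leq K(1\wedge|e|)$ together with $\int_{\E}(1\wedge|e|^2)\lambda(de)<\infty$, Gronwall, a Kolmogorov-type criterion for c\`adl\`ag random fields, and strong uniqueness for (i)--(iii)) is the standard route and is sound, though the paper itself does not reproduce it.
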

It is also known that  the stochastic flow  solution of a continuous
SDE satisfies the homeomorphic property (see Bismut \cite{b}, Kunita
\cite{k2}, \cite{k3}). But this property fails for the solution of
SDE with jumps in general. P.-A. Meyer in \cite{M81} (Remark p.111),
gave a counterexample with the following exponential
equation:
$$ X_{0,t} (x)  = x + \int_0^t  X_{0,s-}  dZ_s
$$ where $ Z$ is semimartingale, $Z_0=0$, such that $Z$ has a jump of size $-1$ at some stopping time $\tau$, $\tau >0$ a.s.
Then all trajectories of $X$, starting at any initial value $x$,
become zero at $\tau$ and stay there after $\tau$. This may be seen
trivially by the explicit form of the solution given by  the
Dol\'eans-Dade exponential:
$$
X_{0,t} (x) = x \exp \big(Z_t - \frac{1}{2}[Z,Z]^c_t \big) \prod_{0
< s \leq t} \big( 1 + \Delta Z_s \big) e^{-\Delta Z_s}.
$$
In the general setting of non-linear SDE, at the jump time $\tau$,
the solution jumps from $ X_{0,\tau-} (x)$ to $ X_{0,\tau-} (x) +
\beta (X_{0,\tau-} (x))$. L\'eandre \cite{l} gave a necessary and
sufficient condition under which  the homeomorphic property is
preserved at the jump time, namely,  for each $e \in {\E}$, the maps
$ H_e \, : \, x \mapsto \, x + \beta(x,e)$ should be one to one and
onto.  One can read also  Fujiwara and Kunita
\cite{fk} and  Protter \cite{protter} for  more
details on the subject. Therefore,  we  assume additionally that, for
each $ e \in \E$,  \textit{the linkage operator}:
$$\textbf{(A5)} \quad  \;  \mathrm{H}_e \, :\, x \mapsto \, x + \beta(x,e) \mbox{ is a }
C^2\mbox{-diffeomorphism.}$$ We denote by
$\mathrm{H}_e^{-1}$ the inverse map of $ \mathrm{H}_e$, and set $
h(x,e) := x - \mathrm{H}_e^{-1}(x)$. We have  the following result
where the proof can be found in  \cite{k4} (Theorem 3.13, p.359):
\begin{proposition}\label{flow}
Assume the assumptions \textbf{(A4)} and \textbf{(A5)} hold. Then
$\{X_{t,s}(x); x \in \mathbb{R}^d \}$ is a $C^2$-diffeomorphism a.s.
stochastic flow. Moreover the inverse of the flow satisfies the
following backward SDE
\begin{equation}\label{inverse:flow}
\begin{split}
X_{t,s}^{-1}(y) &  = y - \int_t^s \widehat{b}(X_{r,s}^{-1}(y)) dr  -
\int_t^s \sigma (X_{r,s}^{-1} (y)) \overleftarrow{dW}_r - \int_t^s
\int_{\E}  {\beta} (X_{r,s}^{-1} (y),e) \widetilde{\mu}  (\overleftarrow{dr},de)\\
 & \quad  + \int_t^s \int_{\E} \widehat{\beta}(X_{r,s}^{-1} (y),e) {\mu} (\overleftarrow{dr},de). \\
\end{split}
\end{equation}
for any  $t<s$,  where
\begin{equation}
\label{drift:backward}
\begin{split}
\widehat{b}(x) =  b(x) -  \sum_{i,j}\frac{\partial \sigma^j (x)
}{\partial x_i}  \sigma^{ij} (x) \quad \mbox{and}\quad
\widehat{\beta} (x,e) =  \beta (x, e) -  h (x,e).
\end{split}
\end{equation}
\end{proposition}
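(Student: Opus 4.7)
The plan is to establish the two assertions separately. First I would prove the $C^2$-diffeomorphism property by splitting the contributions from the continuous and jump parts of the driving noise, and then derive the backward stochastic differential equation for the inverse by combining a change-of-variables identity with backward It\^o calculus.

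For the diffeomorphism property, the $C^2$-regularity in the space variable is essentially contained in Proposition~\ref{estimatesde}: Kolmogorov's continuity criterion applied to the moment estimates on $\Delta_h^i X_{t,r}(x)$ produces a version with twice continuously differentiable trajectories. What remains is invertibility. Between jumps, the classical Bismut--Kunita theory (\cite{b}, \cite{k2}, \cite{k3}) shows that the continuous part of the flow is a $C^2$-diffeomorphism. At each jump time $\tau$ with mark $e$, the flow performs the map $x \mapsto \mathrm{H}_e(x) = x + \beta(x,e)$; hypothesis~\textbf{(A5)} guarantees that this is a $C^2$-diffeomorphism, so its composition with the continuous flow preserves both smoothness and invertibility. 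Piecing together these contributions over the countably many jumps with $|e|$ bounded away from zero, and controlling the accumulation of small jumps via the bound $|\beta(x,e)| \leq K(1\wedge|e|)$ from~\textbf{(A4)}, yields the global $C^2$-diffeomorphism property.

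For the backward equation~\eqref{inverse:flow}, the starting identity is $X_{r,s}(X_{r,s}^{-1}(y)) = y$, constant in $r \in [t,s]$. Fixing $s$ and treating $r$ as the running variable, I would differentiate this identity in the backward direction, applying a generalized backward It\^o--Ventzell formula. The continuous martingale part of $X_{r,s}$ produces, through the Stratonovich-to-It\^o conversion performed in the backward time direction, the correction term $\sum_{i,j}(\partial \sigma^j / \partial x_i)\,\sigma^{ij}$ subtracted from $b$, giving the modified drift $\widehat{b}$ of~\eqref{drift:backward}. The jump part is handled by observing that when $X_{r,s}$ makes a forward jump of size $\beta(X_{r-,s}^{-1}(y),e)$ at time $r$ with mark $e$, the inverse must instantaneously move by $-h(X_{r-,s}^{-1}(y),e)$, where $h(x,e) = x - \mathrm{H}_e^{-1}(x)$ is precisely the compensating jump ensuring $X_{r-,s}^{-1}(y) - h = \mathrm{H}_e^{-1}(X_{r-,s}^{-1}(y))$. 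Writing the resulting jump integral in compensated form against $\widetilde{\mu}(\overleftarrow{dr},de)$ then produces the last two integrals of~\eqref{inverse:flow}: the compensated part carries $\beta$, while the remaining pure-jump Poisson integral carries $\widehat{\beta} = \beta - h$, exactly reflecting the mismatch between the forward jump of the flow and the true jump of its inverse.

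The main technical obstacle will be making the backward It\^o--Ventzell formula rigorous in this jump setting: one must verify that the backward filtration generated by the future increments of $W$ and $\mu$ is adequate to interpret $\overleftarrow{dW}_r$ and $\mu(\overleftarrow{dr},de)$ as stochastic integrators, and check the required integrability of $h(\cdot,e)$ in $e$, which follows from~\textbf{(A5)} together with $|\beta(x,e)| \leq K(1\wedge|e|)$ and the inverse function theorem applied to $\mathrm{H}_e$. Once these analytic foundations are set, the identification of the coefficients $\widehat{b}$ and $\widehat{\beta}$ comes directly from matching the semimartingale decompositions on the two sides of $X_{r,s}(X_{r,s}^{-1}(y)) = y$.
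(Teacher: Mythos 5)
First, a point of reference: the paper does not actually prove Proposition \ref{flow}; it is imported from Kunita \cite{k4} (Theorem 3.13, p.~359), with the homeomorphic property for jump flows going back to L\'eandre \cite{l} and Fujiwara--Kunita \cite{fk}. Your outline reconstructs the strategy of exactly that literature -- interlacing the continuous Bismut--Kunita flow with the linkage maps $\mathrm{H}_e$ at the jumps, then differentiating the identity $X_{r,s}(X_{r,s}^{-1}(y))=y$ backwards in $r$ with a backward It\^o--Ventzell formula to identify $\widehat b$ and $\widehat\beta$ -- so the route is the right one, and your reading of the jump of the inverse as $-h(\cdot,e)$, hence $\widehat\beta=\beta-h$ after compensation, is correct.

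The genuine gap is in the diffeomorphism part. ``Piecing together these contributions over the countably many jumps with $|e|$ bounded away from zero'' is a proof only when the relevant restriction of $\lambda$ is finite; here $\lambda$ is merely $\sigma$-finite with $\int_{\E}(1\wedge|e|^2)\lambda(de)<+\infty$, so the jump times of $\mu$ are a.s.\ dense in $[t,s]$ and there is no first jump from which to start the composition. The actual argument truncates to $\{|e|>\varepsilon\}$, obtains the diffeomorphism property of each truncated flow by interlacing, and must then show that the truncated flows, their inverses and their derivatives up to order two converge (uniformly on compacts) as $\varepsilon\to 0$, and that injectivity and surjectivity survive the limit -- the last two points requiring separate estimates (two-point moment bounds of the type appearing in Proposition \ref{estimatesde} for injectivity, a one-point compactification argument for surjectivity). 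This is precisely where the derivative bounds $|D^{\alpha}\beta(x,e)|\leq K(1\wedge|e|)$ of \textbf{(A4)} are consumed, and it is the substance of \cite{l} and \cite{fk}; ``controlling the accumulation of small jumps'' names the difficulty without resolving it. A second, smaller, debt: the backward It\^o formula for flows with jumps that you invoke to derive \eqref{inverse:flow} is itself one of the main technical results of \cite{k4}, so it must be either established or cited as such rather than ``made rigorous'' in passing; once it is granted, your identification of the coefficients is correct.
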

The explicit form (\ref{inverse:flow}) will be used in the proof of
the equivalence of norms (Proposition
\ref{equivalence:normes}).\medskip

\begin{remark}
 In (\ref{inverse:flow}), the three terms $\int_t^s \sigma (X_{r,s}^{-1} (y)) \overleftarrow{dW}_r$,
  $ \int_t^s\int_{\E}  {\beta} (X_{r,s}^{-1} (y),e) \widetilde{\mu}
(\overleftarrow{dr},de)$ and \\$\int_t^s \int_{\E}
\widehat{\beta}(X_{r,s}^{-1} (y),e) {\mu} (\overleftarrow{dr},de)$
are backward It\^o integrals. We refer the readers to literature
\cite{k4} for the definition (\cite{k4} p. 358). For convenience,
we give the definition of the backward It\^o integral with respect
to a Brownian motion. Let $f(r)$ be a right continuous backward
adapted process, then the backward It\^o integral is defined by
$$\int_t^s f(r) \overleftarrow{dW}_r:=\lim_{|\Pi|\rightarrow
0}\sum_{k} f(t_{k+1})(W_{t_{k+1}}-W_{t_k}),$$ where
$\Pi=\{t=t_0<t_1<\cdots<t_n=s\}$ are partitions of the interval
$[t,s]$. The other two terms can be defined similarly. Note that the inverse flow $X^{-1}_{r,s}$ is backward adapted, so
 we may define the backward integrals such as $\int_t^s \sigma (X_{r,s}^{-1} (y))
 \overleftarrow{dW}_r$ etc..
\end{remark}
\begin{remark}
In the paper of Ouknine and Turpin \cite{ouknineturpin06}, the
authors have weakened the regularity of the coefficients $b$ and
$\sigma$ of the diffusion, but added additional boundedness on them.
Since this improvement is not essential and the same discussion is
also valid for our case, we omit it.
\end{remark}\medskip

We denote by $J(X_{t,s}^{-1}(x))$ the determinant of the Jacobian
matrix of $X_{t,s}^{-1}(x)$, which is positive and
$J(X_{t,t}^{-1}(x))=1$. For $\phi\in C_c^{\infty}(\R^d)$, we define
a process $\phi_t:\,\Omega\times [t,T]\times \R^d\rightarrow \R$ by
\begin{equation}
\label{random:testfunction}
\phi_t(s,x):=\phi(X_{t,s}^{-1}(x))J(X_{t,s}^{-1}(x)).\end{equation}

We know that for $v\in \mathbf{L}^2(\R^d)$, the composition of $v$
with the stochastic flow is
$$(v\circ X_{t,s} (\cdot), \phi):=(v,\phi_t(s,\cdot)).$$

In fact, by a change of variable, we have $$(v\circ X_{t,s} (\cdot),
\phi)=\int_{\R^d}v(X_{t,s}
(x))\phi(x)dx=\int_{\R^d}v(y)\phi(X_{t,s}^{-1}(y))J(X_{t,s}^{-1}(y))dy
=(v,\phi_t(s,\cdot)).$$ Since  $ (\phi_t(s,x))_{ t\leq s}$ is a
process,  we may not use it directly as a test function because \\
$\int_t^T(u(s,\cdot),\partial_s\phi_t(s,\cdot))ds$ has no sense. However
$\phi_t(s,x)$ is a semimartingale and we have the following
decomposition of $\phi_t(s,x)$:
\begin{lemma}\label{decomposition}
For every function $\phi\in C_c^{\infty}(\R^d),$
\begin{equation}\label{decomp}\begin{array}{ll}
\phi_t(s,x)&=\phi(x)+\displaystyle\int_t^s{\mathcal
L}^\ast\phi_t(r,x)dr-\sum_{j=1}^{d}\int_t^s\left(\sum_{i=1}^{d}\frac{\partial}{\partial
x_i}(\sigma^{ij}(x)\phi_t(r,x))\right)dW_r^j\\&+\displaystyle\int_t^s\int_{\E}
{\mathcal A}_e ^\ast \phi_t(r-,x)\dtmur,
 \end{array} \end{equation}
where  ${\mathcal A}_e u(t,x)=u(t, \mathrm{H}_e (x))-u(t,x)$,
${\mathcal A}^\ast_e u(t,x)=u(t,\mathrm{H}^{-1}_e (x))
J(\mathrm{H}^{-1}_e (x))-u(t,x)$ and  ${\mathcal L}^\ast$ is the
adjoint operator of ${\mathcal L}$.
\end{lemma}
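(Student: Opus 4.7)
The plan is to derive the decomposition via a duality argument based on Itô's formula applied to $v(X_{t,s}(x))$ for an auxiliary test function $v \in C_c^{\infty}(\mathbb{R}^d)$, then integrating against $\phi(x)\,dx$, and finally identifying the adjoint operators by change of variables using the Jacobian $J(X_{t,s}^{-1}(x))$ that already appears in the definition \eqref{random:testfunction} of $\phi_t$.

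First, I would fix $v\in C_c^{\infty}(\mathbb{R}^d)$ and apply the classical Itô formula for jump diffusions to $v(X_{t,s}(x))$, using the forward SDE \eqref{sde}. This yields
\begin{equation*}
v(X_{t,s}(x)) = v(x) + \int_t^s \mathcal{L} v(X_{t,r}(x))\,dr + \int_t^s (\nabla v\,\sigma)(X_{t,r}(x))\,dW_r + \int_t^s\!\!\int_{\mathcal{E}} \bigl[v(X_{t,r-}(x)+\beta(X_{t,r-}(x),e)) - v(X_{t,r-}(x))\bigr] \widetilde{\mu}(dr,de).
\end{equation*}
Multiplying by $\phi(x)$, integrating over $\mathbb{R}^d$, and invoking the (stochastic) Fubini theorem to interchange $dx$ with $dr$, $dW_r$, and $\widetilde{\mu}(dr,de)$, I would then perform the change of variable $y=X_{t,r}(x)$, so that $\phi(x)\,dx = \phi(X_{t,r}^{-1}(y)) J(X_{t,r}^{-1}(y))\,dy = \phi_t(r,y)\,dy$, in each integrand.

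Next I would identify the three operators in the right-hand side. For the drift, by definition of the adjoint, $(\mathcal{L}v,\phi_t(r,\cdot))=(v,\mathcal{L}^{*}\phi_t(r,\cdot))$. For the Brownian term, one integration by parts in $y$ converts $\sum_i \sigma^{ij}(y)\partial_i v(y)\cdot\phi_t(r,y)$ into $-v(y)\sum_i \partial_i(\sigma^{ij}(y)\phi_t(r,y))$. For the jump term, the further change of variable $z=\mathrm{H}_e(y)$, with $dy=J(\mathrm{H}_e^{-1}(z))\,dz$, gives
\[
\bigl(v\circ \mathrm{H}_e - v,\phi_t(r-,\cdot)\bigr) = \bigl(v,\phi_t(r-,\mathrm{H}_e^{-1}(\cdot))\,J(\mathrm{H}_e^{-1}(\cdot)) - \phi_t(r-,\cdot)\bigr) = (v,\mathcal{A}_e^{*}\phi_t(r-,\cdot)),
\]
which is exactly the dual of $\mathcal{A}_e$. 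Combining everything, I obtain the decomposition tested against $v$, and since $v\in C_c^{\infty}$ is arbitrary the identity upgrades to the pointwise statement \eqref{decomp}; the $C^2$-diffeomorphism property of the flow (Proposition \ref{flow}) guarantees that $\phi_t(s,\cdot)\in C^2_c(\mathbb{R}^d)$, so $\mathcal{L}^{*}\phi_t(r,\cdot)$, $\partial_i(\sigma^{ij}\phi_t)$, and $\mathcal{A}_e^{*}\phi_t$ are all well defined classically.

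The main technical obstacles I expect are (i) rigorously justifying the stochastic Fubini interchange for the compensated Poisson integral, which requires the $L^2$-integrability of $(e,x)\mapsto v(\mathrm{H}_e(X_{t,r-}(x)))-v(X_{t,r-}(x))$ uniformly in the relevant variables, and this follows from assumption \textbf{(A4)} ($|\beta(x,e)|\leq K(1\wedge|e|)$) together with the compact support of $\phi$ and the moment estimates of Proposition \ref{estimatesde}; and (ii) ensuring that the pointwise-in-$x$ semimartingale identity (as opposed to the distributional one in $x$) makes sense — this is handled by taking a sequence of mollifying test functions $v_n$ approximating Dirac masses and using the $C^2$-regularity in $x$ of both sides, where continuity of the coefficients $b,\sigma,\beta$ and their derivatives under \textbf{(A4)}–\textbf{(A5)} delivers the required uniform convergence on compacts.
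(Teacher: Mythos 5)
Your proposal is correct and follows essentially the same route as the paper: both proofs test the identity against an auxiliary $v\in C_c^{\infty}(\R^d)$, apply It\^o's formula to $v(X_{t,s}(\cdot))$, use the stochastic Fubini theorem and the change of variables involving $J(X_{t,r}^{-1})$ to recognize $\phi_t$, and identify $\mathcal A_e^\ast$ via the substitution $y=\mathrm H_e^{-1}(x)$ before concluding by arbitrariness of $v$. The only difference is cosmetic (you start from It\^o and integrate against $\phi$, the paper starts from $\int v\,(\phi_t(s,\cdot)-\phi)\,dx$ and reduces to It\^o), and your added remarks on justifying the Fubini interchange and the pointwise upgrade are sensible supplements rather than deviations.
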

\begin{proof}  Assume that $v\in C_c^{\infty}(\R^d)$. Applying
the
change of variable $y=X_{t,s}^{-1}(x)$, we can get\\
$\begin{array}{ll} \integ{\R^d}{}v(x)(\phi_t(s,x)-\phi(x))dx&=
\integ{\R^d}{}v(x)(\phi(X_{t,s}^{-1}(x))J(X_{t,s}^{-1}(x))-\phi(X_{t,t}^{-1}(x))J(X_{t,t}^{-1}(x))dx\\
&=\integ{\R^d}{}(v(X_{t,s}(y))\phi(y)-v(y)\phi(y))dy\\
&=\integ{\R^d}{}\phi(y)(v(X_{t,s}(y))-v(y))dy.
\end{array}$\\
As $v$ is smooth enough, using It\^{o}'s formula for $
v(X_{t,s}(y))$, we have\\
$\begin{array}{ll} v(X_{t,s}(y))-v(y)&=\integ{t}{s}{\mathcal{L}}
v(X_{t,r-}(y))dr+\integ{t}{s}\sum_{i=1}^{d}\frac{\partial
v}{\partial
x_i}(X_{t,r}(y))\sum_{j=1}^{d}\sigma^{ij}(X_{t,r}(y))dW_r^j\\
&+\integ{t}{s}\int_{\E}{\mathcal A}_e v(X_{t,r-}(y))\dtmur.
\end{array}$\\
 Therefore,
$$\begin{array}{ll}
&\integ{\R^d}{}v(x)(\phi_t(s,x)-\phi(x))dx\\
=&\integ{\R^d}{}\phi(y)\Big\{\integ{t}{s}\mathcal{L}
v(X_{t,r}(y))dr+\integ{t}{s}\sum_{i=1}^{d}\frac{\partial v}{\partial
x_i}(X_{t,r}(y))\sum_{j=1}^{d}\sigma^{ij}(X_{t,r}(y))dW_r^j\Big\}dy\\
+&\integ{\R^d}{}\phi(y)\integ{t}{s}\int_{\E}{\mathcal A}_e
v(X_{t,r-}(y))\dtmur dy.
\end{array}$$
Since the first term has been dealt in \cite{bm}, and the adjoint
operator of ${\mathcal L}$ exists thanks to \cite{MPS04}, we focus
only on the second term. Using the stochastic Fubini theorem and
the change of variable $x=X_{t,r}(y)$  we obtain
\begin{equation*}
\begin{split}
\integ{\R^d}{}\phi(y)\integ{t}{s}\int_{\E}{\mathcal A}_e v(X_{t,r-}(y))\dtmur dy
&= \integ{t}{s}\int_{\E}\integ{\R^d}{}\phi(y){\mathcal A}_e v(X_{t,r-}(y))dy\dtmur\\
&=\integ{t}{s}\int_{\E}\integ{\R^d}{}\phi_t(r-,x){\mathcal A}_e
v(x)dx\dtmur.
\end{split}
\end{equation*}

Finally, we  use the change of variable $y = \mathrm{H}_e^{-1} (x)$
in the right hand side of the previous expression \beqn
&&\integ{t}{s}\int_{\E}\integ{\R^d}{}\phi_t(r-,x){\mathcal A}_e
v(x)dx\dtmur
\\&=& \integ{t}{s}\int_{\E}\integ{\R^d}{}\phi_t(r-,x) \big( v(\mathrm{H}_e (x))-v(x) \big) \,
dx\dtmur\\
&=&  \integ{t}{s}\int_{\E}\integ{\R^d}{}v(x){\mathcal
A}^\ast_e\phi_t(r-,x)dx\dtmur. \eeqn Since $v$ is an arbitrary
function, the lemma is proved.
\end{proof}
\vspace*{0.6cm}

We also  need  equivalence of norms result  which plays
an important role in the proof of the existence of the solution for
PIDE as a connection between the functional norms and random norms.
For continuous SDEs, this result  was first proved by Barles and Lesigne \cite{bl} by
using an analytic method. In \cite{bm}, the authors have proved the
result with a probabilistic method.  Note that Klimisiak \cite{Klimsiak} have extended this estimes for   Markov process associated  to a non-homogenious divergence  operator. The following result  generalize Proposition 5.1 in \cite{bm}  (see also \cite{bl}) in the case of a diffusion process with jumps, and the proof will be given in Appendix \ref{appendix:equivalencenorm}.
\begin{proposition}\label{equivalence:normes}
There exists two constants $c>0$ and $C>0$ such that for every
$t\leq s\leq T$ and $\varphi\in L^1(\R^d,dx)$,
\begin{equation}\label{equi1} c\int_{\R^d}|\varphi(x)|\rho(x)dx\leq
\int_{\R^d}E(|\varphi(X_{t,s}(x))|)\rho(x)dx\leq
C\int_{\R^d}|\varphi(x)|\rho(x)dx. 
\end{equation} 
Moreover, for
every $\Psi\in L^1([0,T]\times\R^d,dt\otimes dx)$,

\begin{equation} \label{equi2}
c\int_{\R^d}\int_t^T|\Psi(s,x)|ds\rho(x)dx \leq
\int_{\R^d}\int_t^TE(|\Psi(s,X_{t,s}(x))|)ds\rho(x)dx\leq
C\int_{\R^d}\int_t^T|\Psi(s,x)|ds\rho(x)dx.
\end{equation}
 
\end{proposition}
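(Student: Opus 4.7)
The plan is to reduce \eqref{equi1} to a pointwise two-sided estimate on the expected pull-back density $E[\rho(X_{t,s}^{-1}(y))J(X_{t,s}^{-1}(y))]$, and then to obtain that estimate from moment bounds on the inverse flow and its Jacobian. By Proposition \ref{flow}, $x\mapsto X_{t,s}(x)$ is almost surely a $C^2$-diffeomorphism with strictly positive Jacobian, so Fubini combined with the change of variable $y=X_{t,s}(x)$ yields
\[
\int_{\R^d}E\bigl(|\varphi(X_{t,s}(x))|\bigr)\rho(x)\,dx \;=\; \int_{\R^d}|\varphi(y)|\,E\bigl[\rho(X_{t,s}^{-1}(y))J(X_{t,s}^{-1}(y))\bigr]\,dy.
\]
Hence \eqref{equi1} is equivalent to the existence of constants $0<c\leq C<\infty$, independent of $(t,s,y)$, such that $c\,\rho(y)\leq E[\rho(X_{t,s}^{-1}(y))J(X_{t,s}^{-1}(y))]\leq C\,\rho(y)$. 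The time-integrated estimate \eqref{equi2} then follows by integrating in $s\in[t,T]$ and reapplying Fubini.

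For the upper bound I would use that the admissible weights $\rho$ (in the class considered throughout the paper, essentially polynomial-decay weights) satisfy a multiplicative comparison $\rho(y+z)\leq \rho(y)\Phi(|z|)$ with $\Phi$ of polynomial growth, so
\[
E\bigl[\rho(X_{t,s}^{-1}(y))J(X_{t,s}^{-1}(y))\bigr]\;\leq\; \rho(y)\,E\bigl[\Phi(|X_{t,s}^{-1}(y)-y|)\,J(X_{t,s}^{-1}(y))\bigr].
\]
A Cauchy--Schwarz then reduces matters to uniform moment bounds $\sup_{0\leq t\leq s\leq T}E|X_{t,s}^{-1}(y)-y|^p\leq C_p(1+|y|^p)$ and $\sup_{0\leq t\leq s\leq T}E[J(X_{t,s}^{-1}(y))^{p}]\leq C_p$ for every $p\geq 1$; the former follows from the backward SDE \eqref{inverse:flow} and Kunita's backward BDG calculus \cite{k4} under \textbf{(A4)}, mirroring Proposition \ref{estimatesde}. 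The lower bound is obtained by the same scheme applied in the reverse direction, using the companion comparison $\rho(y)\leq \rho(y+z)\Psi(|z|)$ and the additional moment bound on $J(X_{t,s}^{-1}(y))^{-p}$ for $p\geq 1$.

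The main obstacle, and the point where the jump case departs from \cite{bm}, is the control of $J(X_{t,s}^{-1}(y))^{\pm p}$. In the continuous case $J$ is the exponential of a continuous semimartingale and standard exponential-martingale estimates suffice. Here $J$ accumulates multiplicative factors $\det\bigl(I+\nabla_x\beta(X_{t,r-},e)\bigr)$ at each jump time of $\mu$, so I would exploit assumption \textbf{(A4)} (which provides $|\beta|\leq K(1\wedge|e|)$ and bounded spatial derivatives) together with the diffeomorphism assumption \textbf{(A5)} on $H_e$ to ensure that these multiplicative factors remain bounded away from $0$ and $+\infty$ with $\lambda$-integrable compensated logarithms. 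Combining L\'eandre's homeomorphism estimates \cite{l} with exponential-martingale inequalities for pure-jump semimartingales then yields the two-sided moment bound required to close the argument. Since the remaining computations are technical but routine, the full details are deferred to Appendix \ref{appendix:equivalencenorm}.
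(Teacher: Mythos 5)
Your opening reduction is exactly the paper's: change variables $y=X_{t,s}(x)$ and reduce \eqref{equi1} to a two-sided bound on $E\bigl[\rho(X_{t,s}^{-1}(y))J(X_{t,s}^{-1}(y))\bigr]/\rho(y)$, with \eqref{equi2} following by integration in $s$. From there, however, the paper takes a different and much lighter route for the Jacobian than the one you propose: it differentiates the backward SDE \eqref{inverse:flow} to write $\nabla X_{t,s}^{-1}(y)=I+\Gamma_{t,s}(y)$, uses the elementary squeeze $1-\|\Gamma_{t,s}(y)\|\leq J(X_{t,s}^{-1}(y))\leq 1+\|\Gamma_{t,s}(y)\|$, and then proves $E\|\Gamma_{t,s}(y)\|^2\leq K(s-t)$ by splitting $\Gamma$ into its continuous part (already handled in \cite{bm}) and its jump part (a backward Burkholder--Davis--Gundy estimate using $|D^\alpha\beta(x,e)|\leq K(1\wedge|e|)$ from \textbf{(A4)}). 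This yields $|E[J(X_{t,s}^{-1}(y))]-1|\leq 2\sqrt{K(s-t)}$, so both the upper and the lower bound come from a single first-moment estimate; no negative moments of $J$ and no exponential-martingale machinery are ever needed.

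This matters because the two technical pillars of your plan do not hold up as stated. First, for the lower bound you need $E[J(X_{t,s}^{-1}(y))^{-p}]$, which you propose to control by showing the multiplicative jump factors $\det\bigl(I+\nabla_x\beta(\cdot,e)\bigr)$ are "bounded away from $0$." Assumption \textbf{(A5)} only says each $H_e$ is a $C^2$-diffeomorphism, i.e.\ the determinant is nonvanishing for each fixed $e$; neither \textbf{(A4)} nor \textbf{(A5)} gives a uniform positive lower bound over $x$ and over large $|e|$, so this step has a genuine gap (one the paper's first-moment argument simply avoids). Second, your Cauchy--Schwarz step for the weight ratio does not close: with $\rho(x)=(1+|x|)^{-p}$ and $\Phi(r)=(1+r)^{p}$, the available displacement bound is $E|X_{t,s}^{-1}(y)-y|^{2p}\leq C(1+|y|^{2p})$, so $\bigl(E[\Phi(|X_{t,s}^{-1}(y)-y|)^2]\bigr)^{1/2}$ is of order $(1+|y|)^{p}\sim\rho(y)^{-1}$, and your chain of inequalities degenerates to $E[\rho(X_{t,s}^{-1}(y))J]\leq C$ rather than $\leq C\rho(y)$. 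Controlling the ratio $\rho(X_{t,s}^{-1}(y))/\rho(y)$ requires a multiplicative comparison of $1+|X_{t,s}^{-1}(y)|$ with $1+|y|$ in the spirit of \cite{bl} and \cite{bm}, not an additive displacement estimate. So while the overall architecture is right, both the Jacobian bound and the weight bound need to be reworked along the lines the paper actually follows.
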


 We give now the following result which allows us to link by a natural way the solution of PIDE with the associated BSDE.
 Roughly speaking, if we choose in the variational formulation \eqref{wspde1} the random functions $\phi_t(\cdot,\cdot)$
 defined by \eqref{random:testfunction}, as a test functions, then we obtain the  associated BSDE.
 In fact, this result plays the same role as It\^o's formula used in \cite{pp1} and \cite{p1} (see \cite{pp1}, Theorem 3.1, p. 20)
 to relate the solution of some semilinear PDE's with the associated BSDE:
\begin{proposition}
\label{weak:Itoformula} Assume that all the previous assumptions
hold. Let $u\in {\mathcal H_T}$ be a weak solution of
PIDE(\ref{pde1}), then for $s\in[t,T]$ and $\phi\in
C_c^{\infty}(\R^d)$, \begin{equation}\label{wspde2}
\begin{array}{ll}
\displaystyle\int_{\R^d}\int_s^Tu(r,x)d\phi_t(r,x)dx+(u(s,x),\phi_t(s,x))-(g(x),\phi_t(T,x))
-\int_s^T( u(r,\cdot),\mathcal L^\ast \phi_t(r,\cdot))dr\\=\displaystyle\int_{\R^d}\int_s^Tf(r,x,u(r,x),\sigma^\ast\nabla
u(r,x),u(r,x+\beta(x,\cdot))-u(r,x))\phi_t(r,x)drdx. \quad a.s.
\end{array}
\end{equation}
where  $\int_{\R^d}\int_s^Tu(r,x)d\phi_t(r,x)dx $ is well defined
thanks to the semimartingale decomposition result (Lemma
\ref{decomposition}).
\end{proposition}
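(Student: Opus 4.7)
The plan is to derive \eqref{wspde2} from the deterministic variational identity \eqref{wspde1} by a spatial regularization combined with an It\^o-type product rule adapted to the jump setting. The proof proceeds in three steps.

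First, I mollify $u$ in the spatial variable. Let $(\theta_\varepsilon)_{\varepsilon > 0}$ be a smooth mollifier on $\R^d$ and set $u^\varepsilon(s,x) := (u(s,\cdot) * \theta_\varepsilon)(x)$. Then $u^\varepsilon$ is $C^\infty$ in $x$, belongs to $\mathcal{H}_T$, and converges to $u$ in $\mathcal{H}_T$ as $\varepsilon \to 0$. Convolving \eqref{wspde1} with $\theta_\varepsilon$ and using that convolution commutes with $\partial_s$ and $\nabla$ shows that $u^\varepsilon$ satisfies, weakly in $s$ and classically in $x$,
\[
\partial_s u^\varepsilon + \mathcal{L} u^\varepsilon + f^\varepsilon + R_\varepsilon = 0, \qquad u^\varepsilon(T,\cdot) = g * \theta_\varepsilon,
\]
where $f^\varepsilon$ denotes the convolution of $f(\cdot,\cdot,u,\sigma^\ast\nabla u, u(\cdot+\beta)-u)$ with $\theta_\varepsilon$, and $R_\varepsilon$ is a commutator between $\theta_\varepsilon *$ and $\mathcal{L}$ that tends to zero in $\mathbf{L}^2_\rho([0,T]\times\R^d)$ by a standard Friedrichs-lemma argument, using the regularity (A4) of $b,\sigma,\beta$.

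Second, for the smooth approximation I apply the It\^o product formula to the real-valued process $s \mapsto (u^\varepsilon(s,\cdot), \phi_t(s,\cdot))$. Its first factor has finite variation in $s$ (its time derivative is given pointwise in $x$ by the displayed PIDE, with no martingale part), while the second factor admits the explicit semimartingale decomposition \eqref{decomp} from Lemma \ref{decomposition}, so the quadratic covariation vanishes. Substituting $\partial_r u^\varepsilon$ from the PIDE and using the spatial duality $(\mathcal{L} u^\varepsilon(r,\cdot), \phi_t(r,\cdot)) = (u^\varepsilon(r,\cdot), \mathcal{L}^\ast \phi_t(r,\cdot))$, which holds $\omega$-wise because $u^\varepsilon$ is smooth, I recover exactly \eqref{wspde2} with $u$ replaced by $u^\varepsilon$ and $g$ by $g*\theta_\varepsilon$, plus an extra vanishing drift $\int_s^T (R_\varepsilon(r,\cdot), \phi_t(r,\cdot))\, dr$. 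The stochastic integral $\int_{\R^d}\int_s^T u^\varepsilon(r,x)\, d\phi_t(r,x)\, dx$ is well defined term by term from \eqref{decomp}.

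Finally, I let $\varepsilon \to 0$. The four non-stochastic terms converge thanks to $u^\varepsilon \to u$ and $\sigma^\ast\nabla u^\varepsilon \to \sigma^\ast\nabla u$ in $\mathbf{L}^2_\rho$, the Lipschitz assumption (A3) on $f$, and the equivalence of norms (Proposition \ref{equivalence:normes}). For the stochastic integral one must control its Brownian part, handled cleanly by It\^o's isometry, and its compensated Poisson part $\int_s^T\int_{\E}\bigl(u^\varepsilon(r-,\cdot) - u(r-,\cdot), \mathcal{A}_e^\ast \phi_t(r-,\cdot)\bigr)\, \widetilde\mu(dr,de)$. I expect the main obstacle to be the latter: by the Poisson It\^o isometry its $L^2$-norm equals $E \int_s^T\int_{\E} (u^\varepsilon - u, \mathcal{A}_e^\ast \phi_t)^2\, \lambda(de)\,dr$, so one has to produce a dominating function $(1\wedge|e|)^2 \in L^1(\lambda)$ by exploiting the diffeomorphism property (A5) together with $|\beta(x,e)| \leq K(1\wedge|e|)$ from (A4) to bound $\mathcal{A}_e^\ast \phi_t$ pointwise on the support of $\phi$; combined with the $\mathcal{H}_T$-convergence $u^\varepsilon \to u$ and dominated convergence, this closes the passage to the limit.
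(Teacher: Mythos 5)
Your strategy is genuinely different from the paper's. The paper does not write the proof out at all: as the remark following the statement explains, it relies on the polygonal (Wong--Zakai type) approximation of the Brownian motion, following step by step Proposition 2.3 of \cite{bm}; the approximated test function $\phi^n_t$ is then pathwise absolutely continuous in time between the (finitely many, since the remark leans on $\lambda$ being finite) jump times, so it can be inserted directly into the deterministic variational formulation \eqref{wspde1}, and one then passes to the limit in $n$. You instead keep the exact random test function $\phi_t$ and regularize the solution $u$ in space, combining Lemma \ref{decomposition} with the It\^o product rule for a continuous finite-variation factor against a semimartingale. This buys two things: it does not require $\lambda$ to be finite (only $\int_{\E}(1\wedge|e|^2)\lambda(de)<\infty$), and it makes explicit where \textbf{(A4)}--\textbf{(A5)} enter, namely in the pointwise domination of $\mathcal A^\ast_e\phi_t$ by a multiple of $(1\wedge|e|)$ for the Poisson isometry.

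There is, however, one step whose justification as written does not go through: the claim that the commutator $R_\varepsilon=(\mathcal L u)\ast\theta_\varepsilon-\mathcal L(u\ast\theta_\varepsilon)$ tends to $0$ in $\mathbf{L}^2_\rho([0,T]\times\R^d)$ ``by a standard Friedrichs-lemma argument''. The Friedrichs/DiPerna--Lions commutator lemma concerns first-order operators acting on locally integrable derivatives; here $\mathcal L$ contains the second-order term $\tfrac12 a^{ij}\partial^2_{x_ix_j}$ and the nonlocal term $\mathcal K_2$, which involves composition with the variable shift $x\mapsto x+\beta(x,e)$ and is not covered by that lemma at all. Moreover the only regularity available is $u,\ \sigma^\ast\nabla u\in\mathbf{L}^2_\rho$; since $\sigma$ may be degenerate under \textbf{(A4)}, $\nabla u$ need not be locally integrable, so even the first-order version of the lemma is inapplicable, and the strong convergence of $R_\varepsilon$ is unproven (and doubtful in general). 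The repair is to observe that $R_\varepsilon$ only ever enters through the pairing $\int_s^T(R_\varepsilon(r,\cdot),\phi_t(r,\cdot))\,dr$: transposing the mollifier gives
\[
(R_\varepsilon(r,\cdot),\phi_t(r,\cdot))=\Big(u(r,\cdot),\ \mathcal L^\ast\big(\check\theta_\varepsilon\ast\phi_t(r,\cdot)\big)-\check\theta_\varepsilon\ast\big(\mathcal L^\ast\phi_t(r,\cdot)\big)\Big),\qquad \check\theta_\varepsilon(x):=\theta_\varepsilon(-x),
\]
and the right-hand side vanishes as $\varepsilon\to0$, $\omega$-wise and dominatedly in $r$, because $\phi_t(r,\cdot)$ is a $C^2$ function supported in the compact image of $\mathrm{supp}\,\phi$ under the flow, so both terms converge to $\mathcal L^\ast\phi_t(r,\cdot)$ in $\mathbf{L}^2$. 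With the Friedrichs claim replaced by this dual formulation, and with the moment bounds on the flow used to dominate $\|\nabla(\sigma\phi_t(r,\cdot))\|$ and $\|\mathcal A^\ast_e\phi_t(r,\cdot)\|$ uniformly in $r$ for the two stochastic integrals, your argument closes.
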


\begin{remark}
 Note that $\phi_t(r,x)$ is $\R$-valued. We consider that in
 (\ref{wspde2}), the equality holds for each component of $u$.
\end{remark}

\begin{remark}
 Under Brownian framework, this proposition is first proved by Bally
 and Matoussi in \cite{bm} for linear case via the polygonal approximation
 for Brownian motion (see Appendix A in \cite{bm} for more details). In fact,
 thanks especially to the fact that $\lambda$ is finite, we can make the similar approximation for It\^{o}-L\'{e}vy processes
 only by approximating polygonally the Brownian motion, the proof of this proposition follows step by step  the proof of Proposition 2.3 in \cite{bm} (pp. 156), so we omit it.
\end{remark}

\section{Sobolev solutions for parabolic semilinear PIDEs}
\label{section:PIDE}
In this section, we consider the PIDE (\ref{pde1}) under assumptions
(A1)-(A5). Moreover, we consider the following  decoupled  forward
backward stochastic differential equation (FBSDE in short) :
\beq\label{fbsde}
  \noindent\left\{
     \begin{array}{ll}
       X_{t,s}(x)= x+\integ{t}{s}b(X_{t,r}(x))dr+\integ{t}{s}\sigma(X_{t,r}(x))dW_r+\integ{t}{s}\int_{\E}
  \beta(X_{t,r-}(x),e)\dtmur;\\
       Y_s^{t,x}=g(X_{t,T}(x))+\integ{s}{T}f(r,X_{t,r}(x),Y_r^{t,x},Z_r^{t,x},V_r^{t,x})dr-\integ{s}{T}Z_r^{t,x}dW_r\\\hspace{5.5cm}
       -\integ{s}{T} \int_{\E} V_r^{t,x}(e)\dtmur.
     \end{array}
  \right.
\eeq 
According to Proposition 5.4 in \cite{cm} which deals with of
reflected BSDE, we know that (\ref{fbsde}) has a unique solution.
Moreover, we have the following estimate of the solution.
\begin{proposition}\label{prop:estimate} There exists a constant $c >0$  such that, for any $s\in[t,T]$:
 \begin{equation}\label{estimate}
   \sup\limits_{s\in[t,T]}E\big[\|Y_s^{t,\cdot}\|_2^2\big]+E\Big[\integ{t}{T}\|Z_s^{t,\cdot}\|_2^2ds+
   \integ{t}{T}\int_{\E} |V_s^{t,\cdot}(e)|_2^2\lambda(de)ds\Big]\leq
   c \big[ \|g \|_2^2 + \int_t^T \| f_s^0  \|_2^2 ds \big] .
 \end{equation}
\end{proposition}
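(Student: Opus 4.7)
The plan is to derive, for each fixed $x\in\R^d$, the standard a priori estimate for the BSDE with jumps in terms of $g(X_{t,T}(x))$ and $f^0(\cdot,X_{t,\cdot}(x))$, and then integrate against $\rho(x)\,dx$ and invoke the equivalence of norms (Proposition~\ref{equivalence:normes}) to convert the pathwise-in-flow expressions into the functional norms $\|g\|_2$ and $\|f_s^0\|_2$.

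\medskip

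\textbf{Step 1: Pointwise BSDE estimate.} Fix $x\in\R^d$. Apply It\^o's formula to $|Y_s^{t,x}|^2$ on $[s,T]$, taking into account the jump part arising from $V^{t,x}$. After taking expectation this yields, for every $s\in[t,T]$,
\begin{equation*}
E|Y_s^{t,x}|^2 + E\int_s^T |Z_r^{t,x}|^2\,dr + E\int_s^T\!\int_{\E} |V_r^{t,x}(e)|^2\,\lambda(de)\,dr
= E|g(X_{t,T}(x))|^2 + 2E\int_s^T Y_r^{t,x}\, f(r,X_{t,r}(x),Y_r^{t,x},Z_r^{t,x},V_r^{t,x})\,dr.
\end{equation*}
Using \textbf{(A2)}--\textbf{(A3)}, the integrand is bounded by
$|Y_r^{t,x}|\,(|f^0(r,X_{t,r}(x))|+C(|Y_r^{t,x}|+|Z_r^{t,x}|+\|V_r^{t,x}\|))$,
and a standard application of Young's inequality (with a small parameter $\varepsilon$ in front of $|Z_r^{t,x}|^2$ and $\|V_r^{t,x}\|^2$) together with Gronwall's lemma gives a constant $c_1$ independent of $x$ such that
\begin{equation*}
\sup_{s\in[t,T]} E|Y_s^{t,x}|^2 + E\!\int_t^T\! |Z_r^{t,x}|^2\,dr + E\!\int_t^T\!\!\int_{\E}\!\!|V_r^{t,x}(e)|^2\lambda(de)\,dr
\leq c_1\Big(E|g(X_{t,T}(x))|^2 + E\!\int_t^T\! |f^0(r,X_{t,r}(x))|^2\,dr\Big).
\end{equation*}

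\textbf{Step 2: Integration in $x$ with the weight $\rho$.} Multiply the previous inequality by $\rho(x)$ and integrate over $\R^d$. By Fubini, the left-hand side produces exactly
$\sup_{s}E\|Y_s^{t,\cdot}\|_2^2 + E\int_t^T\|Z_s^{t,\cdot}\|_2^2\,ds + E\int_t^T\int_{\E}|V_s^{t,\cdot}(e)|_2^2\lambda(de)\,ds$ (up to moving the supremum inside the $x$-integral, which only enlarges the left-hand side and is legitimate since the estimate holds uniformly in $s$).

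\textbf{Step 3: Equivalence of norms.} Apply Proposition~\ref{equivalence:normes}: the first inequality of that proposition gives
\[
\int_{\R^d} E|g(X_{t,T}(x))|^2\rho(x)\,dx \leq C\|g\|_2^2,
\]
and the second yields
\[
\int_{\R^d}\!\int_t^T E|f^0(r,X_{t,r}(x))|^2\,dr\,\rho(x)\,dx \leq C\!\int_t^T\!\|f_r^0\|_2^2\,dr.
\]
Substituting these into the estimate from Step~2 produces the announced bound with $c = c_1 C$.

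\medskip

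\textbf{Main obstacle.} The only nontrivial point is the passage from the pathwise-in-$x$ estimate to the weighted $L^2_\rho$ estimate: the right-hand side naturally involves $g$ and $f^0$ composed with the flow $X_{t,\cdot}(x)$, not with $x$ itself. This is precisely where the equivalence of norms of Proposition~\ref{equivalence:normes} is essential; without it one would be stuck with expressions of the form $E\|g\circ X_{t,T}\|_2^2$ that are not controlled by $\|g\|_2^2$ alone. The BSDE computation itself is routine, but the jump term requires that the $\|V\|^2$ contribution from the It\^o formula be absorbed carefully by choosing $\varepsilon$ small enough before applying Gronwall.
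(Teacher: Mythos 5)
Your proof is correct and follows exactly the route the paper intends: the paper itself omits the argument (asserting the estimate with a reference to Proposition 5.4 of \cite{cm}), and the standard way to obtain it is precisely your combination of the pointwise It\^o--Young--Gronwall estimate for the BSDE with jumps, integration against $\rho(x)\,dx$, and the equivalence of norms of Proposition \ref{equivalence:normes} to convert $E|g(X_{t,T}(x))|^2$ and $E\int_t^T|f^0(r,X_{t,r}(x))|^2\,dr$ into $\|g\|_2^2$ and $\int_t^T\|f_r^0\|_2^2\,dr$. You also correctly identify the equivalence of norms as the only non-routine ingredient, which is the same mechanism the paper uses in the existence proof of Theorem \ref{main}.
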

Our main  result in this section is the following where the proof will be given in Appendix \ref{appendix:mainPIDE}:
\begin{theorem}
\label{main} Assume that \textbf{(A1)-(A5)} hold. There exists a
unique solution $u\in \mathcal H_T$ of the PIDE (\ref{pde1}).
Moreover, we have the probabilistic representation of the solution:
$u(t,x)=Y_t^{t,x}$, where $(Y_s^{t,x},Z_s^{t,x},V_s^{t,x})$ is the
solution of BSDE (\ref{fbsde}) and, we have $ds\otimes
d\P\otimes\rho(x)dx-a.e.$,
 \begin{equation}
 \label{representation}
  \begin{split}
  &Y_s^{t,x}=u(s,X_{t,s}(x)),\quad Z_s^{t,x}=(\sigma^\ast\nabla
  u)(s,X_{t,s}(x)),\\
  & V_s^{t,x}(\cdot)=u(s,X_{t,s-}(x)+\beta(X_{t,s-}(x),\cdot))-u(s,X_{t,s-}(x)).
\end{split}
 \end{equation}
\end{theorem}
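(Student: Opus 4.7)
The strategy is to separate uniqueness and existence, exploiting the machinery built in Section~\ref{Flow}. Uniqueness will follow from the variational formulation of the PIDE tested against the random test functions $\phi_t(s,\cdot)$ coming from the inverse flow, which transforms (\ref{wspde1}) into the BSDE (\ref{fbsde}) and allows one to invoke the known uniqueness for the latter. Existence will be obtained by approximating the data $(f,g)$ by smooth sequences, solving (\ref{pde1}) classically for the mollified data, and then passing to the limit using the equivalence of norms (Proposition~\ref{equivalence:normes}) to translate the random $L^2$-bounds on the BSDE triple into deterministic $L^2_\rho$-bounds on $(u_n,\sigma^{\ast}\nabla u_n)$.

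For \emph{uniqueness}, let $u\in\mathcal{H}_T$ be any Sobolev solution of (\ref{pde1}) and fix $(t,x)$. Given $\phi\in C_c^{\infty}(\R^d)$, I would apply Proposition~\ref{weak:Itoformula} with the random test function $\phi_t(s,y)=\phi(X_{t,s}^{-1}(y))J(X_{t,s}^{-1}(y))$. Using the semimartingale decomposition of $\phi_t$ provided by Lemma~\ref{decomposition} and performing a stochastic integration by parts, the $\mathcal{L}^{\ast}$-terms cancel while the Brownian and compensated-Poisson martingale parts reorganize. What remains is exactly a backward equation for the triple defined by (\ref{representation}), namely $Y_s^{t,x}:=u(s,X_{t,s}(x))$, $Z_s^{t,x}:=(\sigma^{\ast}\nabla u)(s,X_{t,s}(x))$ and $V_s^{t,x}(e):=u(s,X_{t,s-}(x)+\beta(X_{t,s-}(x),e))-u(s,X_{t,s-}(x))$, coinciding with the BSDE in (\ref{fbsde}). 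Uniqueness of BSDE solutions with jumps then forces $u(t,x)=Y_t^{t,x}$ for $dt\otimes dx$-a.e.\ $(t,x)$, and any two Sobolev solutions must coincide.

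For \emph{existence}, I would mollify $(f,g)$ by smooth sequences $(f_n,g_n)$ converging to $(f,g)$ in the appropriate $L^2_\rho$-topologies, with $f_n$ uniformly Lipschitz. For smooth data, (\ref{pde1}) admits a classical solution $u_n$ whose probabilistic representation $u_n(t,x)=Y_t^{n,t,x}$ and the identifications (\ref{representation}) for $(Y^n,Z^n,V^n)$ follow from It\^o's formula applied to $u_n(s,X_{t,s}(x))$ (this regularity is exactly what is proved in the appendix). The standard BSDE stability estimate applied to the difference $(Y^n-Y^m,Z^n-Z^m,V^n-V^m)$, integrated in $x$ against $\rho(x)dx$, combined with Proposition~\ref{equivalence:normes}, then yields Cauchyness of $(u_n)$ in $\mathcal{H}_T$. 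The limit $u$ inherits the variational formulation (\ref{wspde1}) and the representation (\ref{representation}) by passing to the limit in the corresponding identities for $u_n$.

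The principal obstacle is the passage from the purely probabilistic object $Z$ to a genuine weak spatial derivative of the limit with $\sigma^{\ast}\nabla u\in L^2_\rho$. This is where Proposition~\ref{equivalence:normes} is used in both directions: on the one hand it dominates $\|\sigma^{\ast}\nabla u_n\|_{L^2_\rho([0,T]\times\R^d)}^2$ by $\int_{\R^d}E[\int_t^T|Z_s^{n,t,x}|^2\,ds]\rho(x)\,dx$, which is uniformly controlled by Proposition~\ref{prop:estimate}; on the other hand it transports the pointwise identification $Z_s^{n,t,x}=(\sigma^{\ast}\nabla u_n)(s,X_{t,s}(x))$ through the limit, so that the corresponding identity for $u$ survives. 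A secondary check is that $u(t,x)=Y_t^{t,x}$ is genuinely a deterministic function of $(t,x)$; this is standard for Markovian FBSDEs since $Y_t^{t,x}$ is $\mathcal{F}_t$-measurable while the Brownian increments and Poisson jumps generating it live on $[t,T]$ and are independent of $\mathcal{F}_t$, so $Y_t^{t,x}$ is constant $\P$-a.s.
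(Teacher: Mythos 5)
Your uniqueness argument is exactly the paper's: test against $\phi_t(s,\cdot)=\phi(X_{t,s}^{-1}(\cdot))J(X_{t,s}^{-1}(\cdot))$ via Proposition \ref{weak:Itoformula}, use the decomposition of Lemma \ref{decomposition} to cancel the $\mathcal L^{\ast}$ terms, undo the change of variables, and conclude that the triple \eqref{representation} solves \eqref{fbsde}, so BSDE uniqueness applies. The existence step, however, is organized differently. The paper does not mollify the nonlinear driver $f$: it first \emph{freezes} the nonlinearity by setting $F(s,x):=f(s,x,Y_s^{s,x},Z_s^{s,x},V_s^{s,x})$, where $(Y,Z,V)$ is the already-known solution of the nonlinear FBSDE \eqref{fbsde}; Proposition \ref{prop:estimate} and the equivalence of norms give $F\in\mathbf{L}^2_{\rho}([0,T]\times\R^d)$, and only the pair $(g,F)$ of functions of $(t,x)$ is approximated by smooth compactly supported data. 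The approximating BSDEs \eqref{bsde2} then have drivers independent of $(y,z,v)$, the Cauchy estimate is immediate, and the limit $u$ solves the linear PIDE with source $F$, which is identified a posteriori with the nonlinear PIDE through \eqref{representation}. Your version, which mollifies $f$ itself, must control $E\int_s^T|(f_n-f_m)(r,X_{t,r}(x),Y_r^{m,t,x},Z_r^{m,t,x},V_r^{m,t,x})|^2\,dr$ in the stability estimate; this does not follow from convergence of $f_n(\cdot,\cdot,0,0,0)$ in $\mathbf{L}^2_{\rho}$ alone, since the uniform Lipschitz bound on $f_n-f_m$ only bounds, but does not shrink, the discrepancy at nonzero $(y,z,v)$. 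You would need to specify a mollification converging locally uniformly in $(y,z,v)$ and combine it with the a priori bounds on $(Y^m,Z^m,V^m)$ from Proposition \ref{prop:estimate}. This is repairable but strictly more work; the paper's freezing trick buys a purely linear approximation scheme at no cost. The rest of your outline (classical solvability for smooth data from the appendix regularity results, transfer of bounds by Proposition \ref{equivalence:normes} in both directions, and the Markovian argument that $Y_t^{t,x}$ is deterministic) matches the paper.
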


\begin{remark}
 Since $u\in \mathcal H_T$, $u$ and $v=(\sigma^\ast\nabla
  u)$ are elements in $L^2_{\rho}([0,T]\times\mathbb{R}^d)$ and they are determined
  $\rho(x)dx$ a.e., but because of the equivalence of norms, there
  is no ambiguity in the definition of $u(s,X_{t,s}(x))$ and the others terms of \eqref{representation}.

\end{remark}

\begin{remark}
This stochastic flow method can be generalized to the study of
Sobolev solution of stochastic partial integro-differential
equations (SPIDEs for short) without essential difficulties (see
e.g. \cite{bm} for Brownian framewrok). More precisely, as the
authors have done in \cite{bm,pp1994}, by introducing an appropriate
backward doubly stochastic differential equation (BDSDE for short)
with jumps, we can provide a probabilistic interpretation for Sobolev
solution of an SPIDE by the solution of the BDSDE with jumps.
\end{remark}

\section{Obstacle problem for  PIDEs}
\label{section:obstacle}
In this part, we will study the obstacle
problem   (\ref{OPDIE})  with obstacle function
$h$, where we restrict our study in the one dimensional case ($n=1$).  We shall assume  the following hypothesis on the obstacle: \\[0.2cm]
\textbf{(A6)} $h\in C([0,T]\times\mathbb {R}^d;\mathbb {R})$ and  there exit $\iota,\ \kappa >0$ such that $|h(t,x)|\leq \iota (1+|x|^{\kappa}) $, for all $ x \in \mathbb R^d$.\\

We first introduce the reflected BSDE with jumps (RBSDE with jumps for short) associated with $(g,f,h)$  which has been studied  by Hamad\`ene and Ouknine \cite{ho}:
\begin{equation}
\label{rbsde1}
 \left\lbrace
\begin{aligned}
& Y_{s}^{t,x}
 =g(X_{t,T}(x))+
\int_{s}^{T}f(r,X_{t,r}(x),Y_{r}^{t,x},Z_{r}^{t,x},V_{r}^{t,x})dr+K_{T}^{t,x}-K_{s}^{t,x}\\
&\hspace{1.5cm} -\int_{s}^{T}Z_{r}^{t,x}dW_{r}-\integ{s}{T} \int_{\E} V_r^{t,x}(e)\dtmur,\;
P\text{-}a.s. , \; \forall \,  s \in [t,T]  \\
& Y_{s}^{t,x} \geq L_{s}^{t,x}, \quad \int_{t}^{T}(Y_{s}^{t,x}-L_{s}^{t,x})dK_{s}^{t,x}=0, \, \, P\text{-}a.s.\\
\end{aligned}
\right.
\end{equation}
The obstacle process  $L_{s}^{t,x}=h(s,X_{t,s}(x))$ is a c\`adl\`ag process which has only inaccessible jumps since $h$ is continuous and $(X_{t,s}(x))_{t\leq s\leq T}$ admits inaccessible jumps.  Moreover, using assumption \textbf{(A1)}  and \textbf{(A2)} and  equivalence of norm results 
(\ref{equi1}) and (\ref{equi2}), we get
\[
g(X_{t,T}(x)) \in \mathbf{L}^2({\cal F}_T),  \mbox{ and }
f(s,X_{t,s}(x),0,0,0) \in \mathcal {H}_{d}^2(t,T).
\]
Therefore according to  \cite{ho},  there exists a unique quadruple $(Y^{t,x},Z^{t,x},V^{t,x},K^{t,x})\in \mathcal {S}%
^{2}(t,T)\times \mathcal {H}_{d}^{2}(t,T)\times \mathcal {L}^{2}(t,T) \times\mathcal {A}^{2}(t,T)$
   solution of the
RBSDE with jumps \eqref{rbsde1}. \\[0.2cm]
More precisely, we consider  the  following definition  of weak solutions for the obstacle problem \eqref{OPDIE}:
\begin{definition}
\label{o-pde}We say that $(u,\nu )$ is the weak solution of the PIDE with
obstacle associated to $(g,f,h)$, if\\
(i) $\left\| u\right\|_{{\mathcal H}_T} ^{2}<\infty $, $u\geq h$, and $u(T,x)=g(x)$,\\
(ii) $\nu $ is a positive Radon  \textit{regular measure}  in the following sense, i.e.   for every measurable bounded and positive
functions $\phi $ and $\psi $,
\begin{align}
\nonumber &\int_{\mathbb{R}^{d}}\int_{t}^{T}\phi (s,X^{-1}_{t,s}(x))J(X^{-1}%
_{t,s}(x))\psi (s,x)1_{\{u=h\}}(s,x)\nu (ds,dx)\\
&=\int_{\mathbb{R}%
^{d}}\int_{t}^{T}\phi (s,x)\psi (s,X_{t,s}(x))dK_{s}^{t,x}dx\text{, a.s..}
\label{con-k}
\end{align}
where $(Y_{s}^{t,x},Z_{s}^{t,x},V_{s}^{t,x},K_{s}^{t,x})_{t\leq s\leq T}$ is the
solution of RBSDE with jumps (\ref{rbsde1}) and  such that \\$\int_{0}^{T}\int_{\mathbb{R}
^{d}}\rho (x)\nu (dt,dx)<\infty ,$\\
(iii) for every $\phi \in \mathcal D_T$%
\begin{align}\label{OPDE}
\nonumber &\int_{t}^{T}\int_{\mathbb{R}^{d}}u(s,x)\partial _{s}\phi(s,x)dxds+\int_{\mathbb{R}^{d}}(u(t,x )\phi (t,x
)-g(x )\phi (T,x))dx+\int_{t}^{T}\int_{\mathbb{R}^{d}}u(s,x)\mathcal{L}^*\phi(s,x)dxds\\
\nonumber &=\int_{t}^{T}\int_{\mathbb{R}^{d}}f(s,x ,u,\sigma ^{*}\nabla u, u(s,x+\beta(x,\cdot))-u(s,x))\phi(s,x)dxds
 +\int_{t}^{T}\int_{\mathbb{R}^{d}}\phi (s,x)1_{\{u=h\}}(s,x)\nu (ds,dx). \\
\end{align}
\end{definition}

First, we give a  weak It\^o's formula  similar to  the one given in Proposition \ref{weak:Itoformula}. This result is essential to show the link between a Sobolev solution to the obstacle problem and the associated reflected BSDE with jumps, which in turn insures the uniqueness of the solution. The proof of this proposition is the same as Proposition \ref{weak:Itoformula}.
\begin{proposition}
\label{weak:Itoformula1} Assume that conditions  \textbf{(A1)}-\textbf{(A6)} hold and  $\rho (x)=(1+\left| x\right|)^{-p}$ with $p\geq \gamma $ where
$\gamma =\kappa +d+1$. Let $u\in {\mathcal H_T}$ be a weak solution of
PIDEs with obstacle associated to $(g,f,h)$, 
then for $s\in[t,T]$ and $\phi\in
C_c^{\infty}(\R^d)$, \begin{equation}
\begin{array}{ll}
\displaystyle\int_{\R^d}\int_s^Tu(r,x)d\phi_t(r,x)dx+(u(s,\cdot),\phi_t(s,\cdot))-(g(\cdot),\phi_t(T,\cdot))
-\int_s^T( u(r,\cdot),\mathcal L^\ast \phi_t(r,\cdot))dr\\=\displaystyle\int_{\R^d}\int_s^Tf(r,x,u(r,x),\sigma^\ast\nabla
u(r,x),u(r,x+\beta(x,\cdot))-u(r,x))\phi_t(r,x)drdx\\+\displaystyle\int_{\mathbb{%
R}^{d}}\int_{s}^{T}\phi _{t}(r,x)1_{\{u=h\}}(r,x)\nu (dr,dx). \quad a.s.
\end{array}
\end{equation}
where  $\int_{\R^d}\int_s^Tu(r,x)d\phi_t(r,x)dx $ is well defined
thanks to the semimartingale decomposition result (Lemma
\ref{decomposition}).
\end{proposition}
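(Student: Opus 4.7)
The plan is to adapt the approach of Bally and Matoussi that underlies Proposition \ref{weak:Itoformula} (proved in \cite{bm}, Proposition 2.3). Since the variational equation \eqref{OPDE} differs from \eqref{wspde1} only by the additional reflected-measure term $\int_{\mathbb{R}^d}\int_s^T \phi_t(r,x)1_{\{u=h\}}(r,x)\nu(dr,dx)$, the same polygonal-approximation argument applies, and most of the work consists of checking that this extra term passes to the limit. In the final identity the role of It\^o's formula is played by the semimartingale decomposition of $\phi_t(s,x)$ given in Lemma \ref{decomposition}.

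Concretely, I would first approximate the Brownian motion $W$ by its polygonal interpolation $W^n$ on a dyadic partition of $[0,T]$ and, for a first regularization, truncate the Poisson random measure to $\mu^n$ on $\{|e|\geq 1/n\}$, which is $\lambda$-finite, so only finitely many jumps occur on $[0,T]$. Let $X^n_{t,s}$ be the flow generated by \eqref{sde} with $(W,\mu)$ replaced by $(W^n,\mu^n)$, and set $\phi^n_t(s,x)=\phi((X^n_{t,s})^{-1}(x))\,J((X^n_{t,s})^{-1}(x))$. Fix $\omega$ outside a null set. Between consecutive jump times of $\mu^n$, $\phi^n_t(\cdot,x)$ is $C^1$ in $s$ and $\phi^n_t(s,\cdot)\in C_c^\infty(\mathbb{R}^d)$, so on each smooth piece $\phi^n_t$ is an admissible test function in \eqref{OPDE}. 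Summing the identities over the smooth pieces, with the boundary terms at Poisson jump times collected into a discrete stochastic integral against $\mu^n$, yields a pre-limit identity where $\int_s^T(u,\partial_r\phi^n_t)\,dr$ has been replaced by $\int_{\mathbb{R}^d}\int_s^T u(r,x)\,d\phi^n_t(r,x)\,dx$ and the obstacle term $\int_{\mathbb{R}^d}\int_s^T\phi^n_t 1_{\{u=h\}}\,\nu(dr,dx)$ appears unchanged.

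The passage to the limit then relies on the pathwise convergence $\phi^n_t\to\phi_t$ (and of its $x$-derivatives), on Proposition \ref{estimatesde} for uniform moment bounds on $X^n_{t,s}$ and its inverse, and on the equivalence of norms \eqref{equi1}--\eqref{equi2} to control the deterministic integrals $(u,\phi^n_t(s,\cdot))$, $(g,\phi^n_t(T,\cdot))$, $\int(u,\mathcal L^\ast\phi^n_t)\,dr$, and $\int(f(\cdots),\phi^n_t)\,dr$. The growth bound \textbf{(A6)} together with the choice $p\geq\kappa+d+1$ guarantees integrability of every term against $\rho(x)\,dx$. The obstacle term passes to the limit by dominated convergence, using that $\nu$ satisfies $\int_0^T\!\!\int_{\mathbb{R}^d}\rho(x)\,\nu(dr,dx)<\infty$ and that $|\phi^n_t|$ is pathwise bounded on the random compact set where it is supported.

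The main obstacle will be the handling of the jump boundary contributions in the approximation: at each Poisson jump time of $\mu^n$ the process $\phi^n_t(\cdot,x)$ has a jump of magnitude $\mathcal A_e^\ast\phi^n_t(r-,x)$, and these pieces must be reassembled into the backward It\^o integral $\int_t^s\int_{\E}\mathcal A_e^\ast\phi_t(r-,x)\,\widetilde\mu(dr,de)$ of Lemma \ref{decomposition}, with the corresponding compensator cancellations verified in the limit. This is the jump analogue of the Brownian polygonal argument in \cite{bm}, and the Lipschitz bounds on $\beta$ in \textbf{(A4)} together with the one-to-one condition \textbf{(A5)} make it possible to carry out the same passage to the limit, after which all the ingredients of the announced identity are in place.
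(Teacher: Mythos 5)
Your proposal is correct and follows essentially the same route as the paper: the authors simply state that the proof is identical to that of Proposition \ref{weak:Itoformula}, which itself is the polygonal-approximation argument of Bally--Matoussi adapted to the jump setting via Lemma \ref{decomposition}, with the reflected-measure term passing to the limit exactly as you describe. Your write-up in fact supplies more detail (the truncation of $\mu$, the reassembly of the jump boundary terms, the integrability check using \textbf{(A6)} and $p\geq\kappa+d+1$) than the paper itself records.
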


The main result of this section is the following

\begin{theorem}
\label{mr2} Assume that conditions  \textbf{(A1)}-\textbf{(A6)} hold
and $\rho (x)=(1+\left| x\right|)^{-p}$ with $p\geq \gamma $ where
$\gamma =\kappa +d+1$.
There exists a weak solution $(u,\nu
)$ of the PIDE with obstacle (\ref{OPDIE})
associated to $(g,f,h)$ such that, $ds\otimes
d\P\otimes\rho(x)dx-a.e.$,
\begin{align}\label{con-pre}
\nonumber &Y_{s}^{t,x}=u(s,X_{t,s}(x)),Z_{s}^{t,x}=(\sigma ^{*}\nabla
u)(s,X_{t,s}(x)),\\
&V_{s}^{t,x}(\cdot)=u(s,X_{t,s-}(x)+\beta (X_{t,s-}(x),\cdot))-u(s,X_{t,s-}(x))\text{, a.s..} 
\end{align}
Moreover,  the reflected measure $\nu$ is a regular measure in the sense of the definition (ii) and satisfying the probabilistic interpretation \eqref{con-k}. 

If $(\overline{u},\overline{\nu })$ is another solution of the PIDE with obstacle(\ref
{OPDIE}) such that $\overline{\nu }$ satisfies (\ref{con-k}) with some $%
\overline{K}$ instead of $K$, where $\overline{K}$ is a continuous process
in $\mathcal {A}^{2}(t,T)$, then $\overline{u}=u$ and $\overline{%
\nu }=\nu $.

In other words, there is a unique Randon regular measure with support $\{u=h\}$ which satisfies (\ref{con-k}).
\end{theorem}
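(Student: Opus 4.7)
The plan is to combine the penalization approach of El~Karoui et al.\ \cite{Elk2} and Bally et al.\ \cite{BCEF} with the stochastic flow machinery of Section~\ref{Flow}, adapted to the jump setting. For each $n \in \mathbb{N}$, introduce the penalized driver
\[
f_n(t,x,y,z,v) := f(t,x,y,z,v) + n\bigl(y - h(t,x)\bigr)^-,
\]
which still satisfies \textbf{(A2)}--\textbf{(A3)}. By Theorem~\ref{main}, the PIDE \eqref{pde1} with $f$ replaced by $f_n$ admits a unique solution $u^n \in \mathcal{H}_T$ with the probabilistic representation $u^n(t,x) = Y^{n,t,x}_t$, where $(Y^n,Z^n,V^n)$ solves the FBSDE \eqref{fbsde} driven by $f_n$. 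Setting
\[
K^{n,t,x}_s := n \int_t^s \bigl(Y^{n,t,x}_r - h(r,X_{t,r}(x))\bigr)^- dr,
\]
the convergence results of Hamad\`ene--Ouknine \cite{ho} give $(Y^n,Z^n,V^n,K^n) \to (Y,Z,V,K)$ in $\mathcal{S}^2 \times \mathcal{H}^2 \times \mathcal{L}^2 \times \mathcal{A}^2$, where $(Y,Z,V,K)$ is the unique solution of the RBSDE \eqref{rbsde1}, and $K$ is continuous because the obstacle process has only inaccessible jumps.

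Next, using the equivalence of norms (Proposition~\ref{equivalence:normes}) to transfer BSDE convergences into $L^2_\rho$ convergences of $u^n$, $\sigma^*\nabla u^n$, and the non-local jump term, I obtain a limit $u \in \mathcal{H}_T$ satisfying \eqref{con-pre} and $u \geq h$. The remaining piece is the penalization measure $\nu^n(ds,dx) := n\bigl(u^n(s,x) - h(s,x)\bigr)^- ds\,dx$. A priori estimates on $\mathbb{E}[K^{n,t,x}_T]$, combined with the weight condition $p \geq \gamma = \kappa + d + 1$ and the equivalence of norms, yield
\[
\sup_n \int_0^T \int_{\mathbb{R}^d} \rho(x)\, \nu^n(ds,dx) < \infty,
\]
so a weak-compactness argument extracts a subsequential limit $\nu$, and passing to the limit in the weak formulation \eqref{wspde1} for $u^n$ produces \eqref{OPDE}.

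The main obstacle is to show that $\nu$ is \emph{regular} in the sense of Definition~\ref{o-pde}(ii), i.e.\ that the pairing identity \eqref{con-k} holds. This is where the jump structure of $X_{t,\cdot}$ enters nontrivially: one must check that the pathwise composition $\nu^n \circ X_{t,\cdot}$ identifies with $dK^{n,t,\cdot}$ and that this identification survives in the limit, despite the inaccessible jumps of the flow. The argument uses the weak It\^o formula of Proposition~\ref{weak:Itoformula1} with the random test function $\phi_t(s,x)$ of \eqref{random:testfunction}, the inversion formula \eqref{inverse:flow} for the flow, and the technical lemma announced for the Appendix to control the jump contribution to the tightness. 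The support property $\nu(\{u > h\}) = 0$ is then a consequence of the Skorokhod condition $\int_t^T (Y^{t,x}_s - h(s,X_{t,s}(x)))\,dK^{t,x}_s = 0$ for the RBSDE, transferred back to $\nu$ via \eqref{con-k}.

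For uniqueness, let $(\bar u,\bar\nu)$ be another weak solution with associated continuous increasing process $\bar K \in \mathcal{A}^2(t,T)$ satisfying \eqref{con-k}. Applying the weak It\^o formula of Proposition~\ref{weak:Itoformula1} to $\bar u$ with the test function $\phi_t$ and using the flow inversion (Proposition~\ref{flow}) to change variables, the processes $\bar Y^{t,x}_s := \bar u(s,X_{t,s}(x))$, $\bar Z^{t,x}_s := (\sigma^*\nabla \bar u)(s,X_{t,s}(x))$, $\bar V^{t,x}_s(\cdot) := \bar u(s,X_{t,s-}(x)+\beta(X_{t,s-}(x),\cdot)) - \bar u(s,X_{t,s-}(x))$ together with $\bar K^{t,x}$ solve the RBSDE \eqref{rbsde1} associated with $(g,f,h)$. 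The uniqueness of the solution of the RBSDE with jumps forces $\bar u = u$ (via \eqref{con-pre}) and $\bar K = K$, and then \eqref{con-k} forces $\bar\nu = \nu$, completing the proof.
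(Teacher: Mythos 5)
Your proposal is correct and follows essentially the same route as the paper: penalization, a priori estimates and the equivalence of norms to get convergence of $u^n$ in $\mathcal{H}_T$, tightness of the weighted penalization measures to extract the limit $\nu$, identification of $\nu$ with $dK^{t,x}$ by the change of variables along the flow and the Skorokhod condition, and uniqueness via the random test functions $\phi_t$ and uniqueness of the RBSDE with jumps. The only organizational difference is that you penalize the full nonlinear driver $f_n(t,x,y,z,v)=f(t,x,y,z,v)+n(y-h)^-$ directly, whereas the paper first treats $f$ independent of $(y,z,v)$ and then handles the general case by freezing $F(s,x):=f(s,x,Y_s^{s,x},Z_s^{s,x},V_s^{s,x})$; your variant is workable since the strong $L^2_\rho$ convergence of $(u^n,\sigma^*\nabla u^n)$ and the Lipschitz property let you pass to the limit in the nonlinear term, but the paper's two-step reduction avoids having to argue this.
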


\begin{remark}
The expression (\ref{con-k}) gives us the probabilistic
interpretation (Feymamn-Kac's formula) for the measure $\nu $ via
the nondecreasing process $K^{t,x}$ of the RBSDE with jumps. This formula was
first introduced in Bally et al. \cite{BCEF} (see also \cite{MX08}). Here we generalize their results to the case of PIDEs.
\end{remark}

From Lemma 3.1 in \cite{cm}, we know that if we have more information on the obstacle $L$, we can give a more explicit representation for the processes $K$. Then as a result of the above theorem, we have when $h$ is smooth enough, the reflected measure $\nu$ is Lebesgue absolute continuous, moreover there exist a unique $\widetilde{\nu}$ and a measurable function $(\alpha_s)_{s\geq 0}$ such that $\nu(ds,dx)=\alpha_s\widetilde{\nu}_s(dx)ds$ 
.

\vspace{0.5em}
\begin{proof}
\textbf{ a) Existence}: The existence of a solution
will be proved in two steps. For the first step, we suppose that
$f$ does not depend on $y,z,w$, then we are able to apply the usual penalization method. 
In the second step, we study the case when $f$ depends on $y,z,w$ with the result obtained in the first step.\\[0.1cm] 

\textit{Step 1} :
We will use the penalization method. For $n\in \mathbb{N}$, we consider for
all $s\in [t,T]$,
\begin{align*}
Y_{s}^{n,t,x}=g(X_{t,T}(x))&+\int_{s}^{T}f(r,X_{t,r}(x))dr+n%
\int_{s}^{T}(Y_{r}^{n,t,x}-h(r,X_{t,r}(x)))^{-}dr\\
&-\int_{s}^{T}Z_{r}^{n,t,x}dW_{r}-\integ{s}{T} \int_{\E} V_r^{n,t,x}(e)\dtmur.
\end{align*}

From Theorem (\ref{main}) in section 3, we know that $u_{n}(t,x):=Y_{t}^{n,t,x}$%
, is solution of the PIDE$(g,f_{n})$, where $%
f_{n}(t,x,y)=f(t,x)+n(y-h(t,x))^{-}$, i.e. for every $\phi \in\mathcal{D}_T$%
\begin{align}\label{o-equa1}
 \nonumber\int_{t}^{T}(u^{n}(s,\cdot),\partial
_{s}\phi(s,\cdot) )ds & +(u^{n}(t,\cdot ),\phi
(t,\cdot ))-(g(\cdot ),\phi (T,\cdot))+\int_{t}^{T}%
(u^{n}(s,\cdot),\mathcal{L}^*\phi(s,\cdot))ds\\
&=\int_{t}^{T}(f(s,\cdot),\phi(s,\cdot))ds+n\int_{t}^{T}((u^{n}-h)^{-}(s,\cdot ),\phi(s,\cdot))ds.
\end{align}
Moreover
\begin{align}\label{rep1}
\nonumber &Y_{s}^{n,t,x}=u_{n}(s,X_{t,s}(x)),Z_{s}^{n,t,x}=\sigma ^{*}\nabla
u_{n}(s,X_{t,s}(x)),\\
&V_{s}^{n,t,x}(\cdot)=u_{n}(s,X_{t,s-}(x)+\beta (X_{t,s-}(x),\cdot))-u_{n}(s,X_{t,s-}(x)) 
\end{align}
Set $K_{s}^{n,t,x}=n \displaystyle
\int_{t}^{s}(Y_{r}^{n,t,x}-h(r,X_{t,r}(x)))^{-}dr$. Then by
(\ref{rep1}), we have that $K_{s}^{n,t,x}=n \displaystyle
\int_{t}^{s}(u_{n}-h)^{-}(r,X_{t,r}(x))dr$.

Following the estimates and convergence results for
$(Y^{n,t,x},Z^{n,t,x},V^{n,t,x},K^{n,t,x})$ in the step 3 and step 5 of the proof of Theorem 1.2.  in
\cite{ho},  we get as $m$, $n$ tend to infinity  :

\begin{align*}
& E\sup_{t\leq s\leq T}\left| Y_{s}^{n,t,x}-Y_{s}^{m,t,x}\right|
^{2}+E\int_{t}^{T}\left| Z_{s}^{n,t,x}-Z_{s}^{m,t,x}\right|
^{2}ds\\
&+E\int_{t}^{T}\int_{\mathbb E}\left| V_{s}^{n,t,x}(e)-V_{s}^{m,t,x}(e)\right|
^{2}\lambda(de)ds+E\sup_{t\leq s\leq T}\left|
K_{s}^{n,t,x}-K_{s}^{m,t,x}\right| ^{2}\longrightarrow 0,
\end{align*}
and

\begin{align}
\label{K-estimate}
\nonumber &\sup_n E\left[\sup_{t\leq s\leq T}\left| Y_{s}^{n,t,x}\right| ^{2}+\int_{t}^{T}(\left|
Z_{s}^{n,t,x}\right|^2ds)+\int_{t}^{T}\int_{\mathbb E}\left| V_{s}^{n,t,x}(e)\right|^{2}\lambda(de)ds + (K_{T}^{n,t,x})^{2}\right]\\
&\leq C\left( 1+\lvert x\rvert^{2\kappa}\right) .
\end{align}

By the equivalence of norms (\ref{equi2}), we get
\begin{eqnarray*}
&&\int_{\mathbb{R}^{d}}\int_{t}^{T}\rho (x)(\left|
u_{n}(s,x)-u_{m}(s,x)\right| ^{2}+\left| \sigma ^{*}\nabla u_{n}(s,x)-\sigma
^{*}\nabla u_{m}(s,x)\right| ^{2})dsdx \\
&\leq &\frac{1}{k_{2}}\int_{\mathbb{R}^{d}}\rho (x)E\int_{t}^{T}(\left|
Y_{s}^{n,t,x}-Y_{s}^{m,t,x}\right| ^{2}+\left|
Z_{s}^{n,t,x}-Z_{s}^{m,t,x}\right| ^{2})dsdx\rightarrow 0.
\end{eqnarray*}
Thus $(u_{n})_{n\in\mathbb N}$ is a Cauchy sequence in $\mathcal{H}_T$, and the limit $%
u=\lim_{n\rightarrow \infty }u_{n}$ belongs to  $\mathcal{H}_T$.
Denote $\nu _{n}(dt,dx)=n(u_{n}-h)^{-}(t,x)dtdx$ and $\pi _{n}(dt,dx)=\rho
(x)\nu _{n}(dt,dx)$, then by (\ref{equi2})
\begin{eqnarray*}
\pi _{n}([0,T]\times \mathbb{R}^{d}) &=&\int_{\mathbb{R}^{d}}\int_{0}^{T}%
\rho (x)\nu _{n}(dt,dx)=\int_{\mathbb{R}^{d}}\int_{0}^{T}\rho
(x)n(u_{n}-h)^{-}(t,x)dtdx \\
&\leq &\frac{1}{k_{2}}\int_{\mathbb{R}^{d}}\rho (x)E\left|
K_{T}^{n,0,x}\right| dx\leq C\int_{\mathbb{R}^{d}}\rho (x)\left( 1+\lvert x\rvert^{\kappa}\right)dx<\infty .
\end{eqnarray*}
It follows that
\begin{equation}
\sup_{n}\pi _{n}([0,T]\times \mathbb{R}^{d})<\infty .  \label{est-measure}
\end{equation}
Moreover  by Lemma \ref{tight} (see Appendix \ref{appendix:tight}), the sequence of measures $(\pi _{n})_{n  \in \mathbb N}$  is tight.  Therefore, there exits a subsequence such that $(\pi _{n})_{n  \in \mathbb N}$  converges  weakly to a positive measure $\pi $.\\ Define $\nu =\rho
^{-1}\pi $; $\nu $ is a positive measure such that $\int_{0}^{T}\int_{%
\mathbb{R}^{d}}\rho (x)\nu (dt,dx)<\infty $, and so we have for
$\phi \in \mathcal{D}_T$ with
compact support in $x$,
\[
\int_{\mathbb{R}^{d}}\int_{t}^{T}\phi d\nu _{n}=\int_{\mathbb{R}%
^{d}}\int_{t}^{T}\frac{\phi }{\rho }d\pi _{n}\rightarrow \int_{\mathbb{R}%
^{d}}\int_{t}^{T}\frac{\phi }{\rho }d\pi =\int_{\mathbb{R}%
^{d}}\int_{t}^{T}\phi d\nu .
\]

Now passing to the limit in the PIDEs $(g,f_{n})$ (\ref{o-equa1}), we get that  that $(u,\nu )$
satisfies the PIDsE with obstacle associated to  $(g,f,h)$, i.e. for every $\phi \in
\mathcal{D}_T$, we have
\begin{eqnarray}
&&\int_{t}^{T}(u(s,\cdot),\partial _{s}\phi(s,\cdot) )ds+(u(t,\cdot ),\phi (t,\cdot))-(g(\cdot),\phi (T,\cdot))+\int_{t}^{T}(u(s,\cdot),\mathcal{L}^*\phi(s,\cdot))ds
\nonumber \\
&=&\int_{t}^{T}(f(s,\cdot),\phi(s,\cdot) )ds+\int_{t}^{T}\int_{\mathbb{R}%
^{d}}\phi (s,x)\nu (ds,dx).  \label{equa1} 
\end{eqnarray}
The last point is to prove that $\nu $ satisfies the probabilistic interpretation (%
\ref{con-k}). Since $K^{n,t,x}$ converges to $K^{t,x}$ uniformly in $t$, the
measure $dK^{n,t,x}\rightarrow dK^{t,x}$ weakly in probability.\\
Fix two continuous functions $\phi $, $\psi $ : $[0,T]\times \mathbb{R}%
^{d}\rightarrow \mathbb{R}^{+}$ which have compact support in $x$ and a
continuous function with compact support $\theta :\mathbb{R}^{d}\rightarrow %
\mathbb{R}^{+}$, we have
\begin{eqnarray*}
&&\int_{\mathbb{R}^{d}}\int_{t}^{T}\phi (s,X^{-1}_{t,s}(x))J(X%
^{-1}_{t,s}(x))\psi (s,x)\theta (x)\nu (ds,dx) \\
&=&\lim_{n\rightarrow \infty }\int_{\mathbb{R}^{d}}\int_{t}^{T}\phi (s,%
X^{-1}_{t,s}(x))J(X^{-1}_{t,s}(x))\psi (s,x)\theta
(x)n(u_{n}-h)^{-}(s,x)dsdx \\
&=&\lim_{n\rightarrow \infty }\int_{\mathbb{R}^{d}}\int_{t}^{T}\phi
(s,x)\psi (s,X_{t,s}(x))\theta
(X_{t,s}(x))n(u_{n}-h)^{-}(t,X_{t,s}(x))dsdx \\
&=&\lim_{n\rightarrow \infty }\int_{\mathbb{R}^{d}}\int_{t}^{T}\phi
(s,x)\psi (s,X_{t,s}(x))\theta (X_{t,s}(x))dK_{s}^{n,t,x}dx \\
&=&\int_{\mathbb{R}^{d}}\int_{t}^{T}\phi (s,x)\psi (s,X_{t,s}(x))\theta
(X_{t,s}(x))dK_{s}^{t,x}dx.
\end{eqnarray*}

We take $\theta =\theta _{R}$ to be the regularization of the indicator
function of the ball of radius $R$ and pass to the limit with $R\rightarrow
\infty $, it follows that
\begin{equation}\label{con-k1}
\int_{\mathbb{R}^{d}}\int_{t}^{T}\phi (s,X^{-1}_{t,s}(x))J(X^{-1}%
_{t,s}(x))\psi (s,x)\nu (ds,dx)=\int_{\mathbb{R}^{d}}\int_{t}^{T}\phi
(s,x)\psi (s,X_{t,s}(x))dK_{s}^{t,x}dx.
\end{equation}
Since $(Y_{s}^{n,t,x},Z_{s}^{n,t,x},V_{s}^{n,t,x},K_{s}^{n,t,x})$ converges to $%
(Y_{s}^{t,x},Z_{s}^{t,x},V_{s}^{t,x},K_{s}^{t,x})$ as $n\rightarrow \infty $ in $\mathcal {%
S}^{2}(t,T)$ $\times \mathcal { H}_d^{2}(t,T)\times \mathcal {L}^{2}(t,T)\times \mathcal {A}^{2}(t,T)$, and $%
(Y_{s}^{t,x},Z_{s}^{t,x},V_{s}^{t,x},K_{s}^{t,x})$ is the solution of RBSDEs with jumps ($g(X_{t,T}(x))$, $f$, $h$), then we have
\[
\int_{t}^{T}(Y_{s}^{t,x}-L_{s}^{t,x})dK_{s}^{t,x}=%
\int_{t}^{T}(u-h)(s,X_{t,s}(x))dK_{s}^{t,x}=0,\text{a.s.}
\]
It follows that $dK_{s}^{t,x}=1_{\{u=h\}}(s,X_{t,s}(x))dK_{s}^{t,x}$. In (\ref{con-k1}), setting $\psi =1_{\{u=h\}}$ yields

\begin{align*}
&\int_{\mathbb{R}^{d}}\int_{t}^{T}\phi (s,X^{-1}_{t,s}(x))J(X^{-1}%
_{t,s}(x))1_{\{u=h\}}(s,x)\nu (ds,dx)\\&=\int_{\mathbb{R}^{d}}\int_{t}^{T}\phi
(s,X^{-1}_{t,s}(x))J(X^{-1}_{t,s}(x))\nu (ds,dx)\text{, a.s.}
\end{align*}

Note that the family of functions $A(\omega )=\{(s,x)\rightarrow \phi (s,%
X^{-1}_{t,s}(x)):\phi \in C_{c}^{\infty }\}$ is an algebra which
separates the points (because $x\rightarrow X^{-1}_{t,s}(x)$ is a
bijection). Given a compact set $G$, $A(\omega )$ is dense in $C([0,T]\times
G)$. It follows that $J(X^{-1}_{t,s}(x))1_{\{u=h\}}(s,x)\nu (ds,dx)=J(%
X^{-1}_{t,s}(x))\nu (ds,dx)$ for almost every $\omega $. While $J(%
X^{-1}_{t,s}(x))>0$ for almost every $\omega $, we get $\nu
(ds,dx)=1_{\{u=h\}}(s,x)\nu (ds,dx)$, and (\ref{con-k}) follows.
Then we get easily that $Y_{s}^{t,x}=u(s,X_{t,s}(x))$, $%
Z_{s}^{t,x}=\sigma ^{*}\nabla u(s,X_{t,s}(x))$ and $V_{s}^{t,x}(\cdot)=u(s,X_{t,s-}(x)+\beta (X_{t,s-}(x),\cdot))-u(s,X_{t,s-}(x))$,  in view of the convergence
results for $(Y_{s}^{n,t,x},Z_{s}^{n,t,x},V_{s}^{n,t,x})$ and the equivalence of norms. So $u(s,X_{t,s}(x))=Y_{s}^{t,x}\geq h(s,X_{t,s}(x))$. Specially for $s=t$, we
have $u(t,x)\geq h(t,x)$.\\[0.2cm]
\textit{ Step 2 } : \textit{The  nonlinear  case where $f$
depends on $y, z$  and $ w$}.\\  Let define $F(s,x)\triangleq
f(s,x,Y_s^{s,x},Z_s^{s,x},V_s^{s,x}).$  By plugging into the facts that
$f^0\in \mathbf{L}^2_\rho([0,T]\times\R^d)$ and $f$ is Lipschitz
with respect to $(y,z,v)$, then thanks to Proposition  \ref{prop:estimate}
we have $F(s,x)\in \mathbf{L}^2_\rho([0,T]\times\R^d)$. Since $F$ is independent of $y,z,w$, by applying the result of Step 1 yields that there exists $(u,\nu )$ satisfying the PIDEs with obstacle $(g,F,h)$, i.e. for every $\phi \in
\mathcal{D}_T$, we have
\begin{eqnarray}
&&\int_{t}^{T}(u(s,\cdot),\partial _{s}\phi(s,\cdot) )ds+(u(t,\cdot ),\phi (t,\cdot))-(g(\cdot),\phi (T,\cdot))+\int_{t}^{T}(u(s,\cdot),\mathcal{L}^*\phi(s,\cdot))ds
\nonumber \\
&=&\int_{t}^{T}(F(s,\cdot),\phi(s,\cdot) )ds+\int_{t}^{T}\int_{\mathbb{R}%
^{d}}\phi (s,x)1_{\{u=h\}}(s,x)\nu (ds,dx).  \label{equa2}
\end{eqnarray}
Then by the uniqueness of the solution to the RBSDEs with jumps ($g(X_{t,T}(x))$, $f$, $h(X_{t,s}(x))$),  we get easily that $Y_{s}^{t,x}=u(s,X_{t,s}(x))$, $%
Z_{s}^{t,x}=\sigma ^{*}\nabla u(s,X_{t,s}(x))$, $V_{s}^{t,x}(\cdot)=u(s,X_{t,s-}(x)+\beta (X_{t,s-}(x),\cdot))-u(s,X_{t,s-}(x))$, and $\nu$ satisfies the probabilistic interpretation (\ref{con-k}). So $u(s,X_{t,s}(x))=Y_{s}^{t,x}\geq h(s,X_{t,s}(x))$. Specially for $s=t$, we have $u(t,x)\geq h(t,x)$, which is the desired result.
\\[0.1cm]

\textbf{ b) Uniqueness } :  Set $(\overline{u},\overline{\nu
})$ to be another weak solution of the PIDEs with
obstacle (\ref{OPDE}) associated to $(g,f,h)$; with $\overline{\nu }$ verifies (%
\ref{con-k}) for a nondecreasing process $\overline{K}$. We fix $\phi :%
\mathbb{R}^{d}\rightarrow \mathbb{R}$, a smooth function in $C_{c}^{2}(%
\mathbb{R}^{d})$ with compact support and denote $\phi _{t}(s,x)=\phi (%
X^{-1}_{t,s}(x))J(X^{-1}_{t,s}(x))$. From Proposition \ref{weak:Itoformula1}%
, one may use $\phi _{t}(s,x)$ as a test function in the PIDEs $(g,f,h)$ with $%
\partial _{s}\phi (s,x)ds$ replaced by a stochastic integral with respect to
the semimartingale $\phi _{t}(s,x)$. Then we get, for $t\leq s\leq T$
\begin{align}
&\int_{s}^{T}\int_{\mathbb{R}^{d}}\overline{u}(r,x)d\phi _{t}(r,x)dx+(
\overline{u}(s,\cdot ),\phi _{t}(s,\cdot ))-(g(\cdot ),\phi _{t}(T,\cdot))-\int_{s}^{T}\int_{\mathbb{R}^{d}}\overline{u}(r,x)\mathcal{L}^*\phi_t(r,x)drdx \label{o-pde-u1}
\nonumber \\
&=\int_{s}^{T}\int_{\mathbb{R}^{d}}f(r,x,\overline{u}(r,x),\sigma
^{*}\nabla \overline{u}(r,x),\overline{u}(r,x+\beta(x,\cdot))-\overline{u}(r,x))\phi _{t}(r,x)drdx \nonumber \\
& +\int_{s}^{T}\int_{\mathbb{
R}^{d}}\phi _{t}(r,x)1_{\{\overline{u}=h\}}(r,x)\overline{\nu }(dr,dx).
\end{align}
By (\ref{decomp}) in Lemma \ref{decomposition}, we have
\begin{align*}
&\int_{s}^{T}\int_{\mathbb{R}^{d}}\overline{u}(r,x)d\phi _{t}(r,x)dx
=\int_{s}^{T}(\int_{\mathbb{R}^{d}}(\sigma ^{*}\nabla \overline{u}
)(r,x)\phi
_{t}(r,x)dx)dW_{r}\\&+\displaystyle\int_s^T\int_{\E}\int_{\mathbb{R}^{d}}
\overline{u}(r,x) {\mathcal A}_e^\ast\phi_t(r-,x)dx\dtmur
 +\int_{s}^{T}\int_{\mathbb{R}^{d}}\overline{u}(r,x)\mathcal{L}^*\phi_t(r,x)drdx.
\end{align*}
Substitute this equality in (\ref{o-pde-u1}), we get
\begin{align*}
&\int_{\mathbb{R}^{d}}\overline{u}(s,x)\phi _{t}(s,x)dx =(g(\cdot
),\phi _{t}(T,\cdot))-\int_{s}^{T}(\int_{\mathbb{R}^{d}}(\sigma
^{*}\nabla \overline{u})(r,x)\phi
_{t}(r,x)dx)dW_{r}\\&+\displaystyle\int_s^T\int_{\E}\int_{\mathbb{R}^{d}}
{\mathcal A}_e \overline{u}(r-,x) \phi_t(r,x)dx\dtmur \\
&+\int_{s}^{T}\int_{\mathbb{R}^{d}}f(r,x,\overline{u}(r,x),\sigma
^{*}\nabla \overline{u}(r,x),\overline{u}(r,x+\beta(x,\cdot))-\overline{u}(r,x))\phi _{t}(r,x )drdx\\&+\int_{s}^{T}\int_{\mathbb{%
R}^{d}}\phi _{t}(r,x)1_{\{\overline{u}=h\}}(r,x)\overline{\nu }(dr,dx).
\end{align*}
Then by changing of variable $y=X^{-1}_{t,r}(x)$ and applying (\ref
{con-k}) for $\overline{\nu }$, we obtain
\begin{align*}
&\int_{\mathbb{R}^{d}}\overline{u}(s,X_{t,s}(y))\phi (y)dy =\int_{\mathbb{R}^{d}}g(X_{t,T}(y))\phi (y)dy\\\
&+\int_{s}^{T}\int_{\mathbb{R}^{d}}\phi(y)f(r,X_{t,r}(y),\overline{u}(r,X_{t,r}(y)),\sigma ^{*}\nabla \overline{u}%
(r,X_{t,r}(y)),{\mathcal A}_e \overline{u}(r,X_{t,r-}(y)))drdy \\
&+\int_{s}^{T}\int_{\mathbb{R}^{d}}\phi (y)1_{\{\overline{u}%
=h\}}(r,X_{t,s}(y))d\overline{K}_{r}^{t,y}dy-\int_{s}^{T}(\int_{\mathbb{R}%
^{d}}(\sigma ^{*}\nabla \overline{u})(r,X_{t,r}(y))\phi (y)dy)dW_{r}\\&+\int_{s}^{T}\int_{\mathbb E}\left(\int_{\mathbb{R}^{d}}{\mathcal A}_e \overline{u}(r,X_{t,r-}(y))\phi(y)dy\right)\tilde{\mu}(dr,de).
\end{align*}
Since $\phi $ is arbitrary, we can prove that for $\rho (y)dy$ almost every $%
y$, ($\overline{u}(s,X_{t,s}(y))$, $(\sigma ^{*}\nabla \overline{u}
)(s,X_{t,s}(y))$, $\overline{u}(s,X_{t,s-}(y)+\beta(X_{t,s-}(y),\cdot))-\overline{u}(s,X_{t,s-}(y)),\widehat{K}_{s}^{t,y}$) solves the RBSDEs with jumps
$(g(X_{t,T}(y)),f,h)$. Here $\widehat{K}_{s}^{t,y}$=$\int_{t}^{s}1_{\{%
\overline{u}=h\}}(r,X_{t,r}(y))d\overline{K}_{r}^{t,y}$. Then by the
uniqueness of the solution of the RBSDEs with jumps, we know $\overline{u}%
(s,X_{t,s}(y))=Y_{s}^{t,y}=u(s,X_{t,s}(y))$, $(\sigma ^{*}\nabla \overline{%
u})(s,X_{t,s}(y))=Z_{s}^{t,y}=(\sigma ^{*}\nabla u)(s,X_{t,s}(y))$, $\overline{u}(s,X_{t,s-}(y)+\beta (X_{t,s-}(y),\cdot))-\overline{u}(s,X_{t,s-}(y))=V_{s}^{t,y}(\cdot)=u(s,X_{t,s-}(y)+\beta (X_{t,s-}(y),\cdot))-u(s,X_{t,s-}(y))$ and $%
\widehat{K}_{s}^{t,y}=K_{s}^{t,y}$. Taking $s=t$ we deduce that $\overline{u}%
(t,y)=u(t,y)$, $\rho (y)dy$-a.s. and by the probabilistic interpretation (%
\ref{con-k}), we obtain
\[
\int_{s}^{T}\int_{\R^d} \phi _{t}(r,x)1_{\{\overline{u}=h\}}(r,x)\overline{\nu }%
(dr,dx)=\int_{s}^{T}\int_{\R^d} \phi _{t}(r,x)1_{\{u=h\}}(r,x)\nu (dr,dx).
\]
So $1_{\{\overline{u}=h\}}(r,x)\overline{\nu
}(dr,dx)=1_{\{u=h\}}(r,x)\nu (dr,dx)$.
\end{proof}

\section{Appendix}
\label{Appendix}
\subsection{Proof of  Proposition \ref{equivalence:normes}}
\label{appendix:equivalencenorm}
  In order to prove (\ref{equi1}), it is sufficient
to prove that $$c\leq E[\frac{J(X^{-1}_{t,s}(x))\rho(
X^{-1}_{t,s}(x))}{\rho(x)}]\leq C.$$ In fact, making the change of
variable $y=X_{t,s}(x)$, we can get the following relation:
$$\int_{\R^d}E(|\varphi(X_{t,s}(x))|)\rho(x)dx=\int_{\R^d}|\varphi(y)|E[\frac{J(X^{-1}_{t,s}(y))\rho(X^{-1}_{t,s}(y))}{\rho(y)}]\rho(y)dy.$$

We differentiate with respect to $y$ in \eqref{inverse:flow} in
order to get:
\begin{equation}
\label{inverse:flow1}
\begin{array}{lll}
\nabla X_{t,s}^{-1}(y) &  = & \displaystyle I - \int_t^s \nabla
\widehat{b}(X_{r,s}^{-1}(y) \nabla X_{t,r}^{-1}(y)dr  -
\int_t^s \nabla \sigma (X_{r,s}^{-1} (y))\nabla X_{t,r}^{-1}(y) \overleftarrow{dW}_r\\
& & - \displaystyle\int_t^s \int_{\E}  \nabla {\beta} (X_{r,s}^{-1}
(y),e) \nabla
X_{t,r}^{-1}(y) \widetilde{\mu}  (\overleftarrow{dr},de)\\
&&+ \displaystyle\int_t^s \int_{\E} \nabla \widehat{\beta}(X_{r,s}^{-1} (y),e) \nabla X_{t,r}^{-1}(y) {\mu} (\overleftarrow{dr},de) \\
& :=& I + \Gamma_{t,s} (y)
\end{array}
\end{equation}
 where $\nabla b$, $\nabla \sigma$, $\nabla \beta$ and $\nabla \widehat{\beta}$ are the gradient of $b$, $\sigma$,$\beta$ and $\widehat{\beta}$, respectively  and $I$ is the
identity matrix.
Since $J(X_{t,s}^{-1}(y)):=\det \nabla
X_{t,s}^{-1}(y)=\inf_{\|\xi\|=1}< \nabla X^{-1}_{t,s}(y)\xi,\xi>$, we
obtain
$$1-\|\Gamma_{t,s} (y)\|\leq J(X_{t,s}^{-1}(y))\leq 1+\|\Gamma_{t,s} (y)\|.$$
Writing $\Gamma_{t,s} (y):=C_{t,s}(y)+D_{t,s}(y)$, where
$D_{t,s}(y)$ denotes the integration with respect to the random
measure and $C_{t,s}(y)$
denotes the others.\\
According to Bally and Matoussi \cite{bm}, we know that for any $y$,
$E[|C_{t,s}(y)|^2]\leq K(s-t)$. Now we are going to prove the
similar relation for $D_{t,s}(y)$. We only deal with the first term
of $D_{t,s}(y)$ because another one can be treated similarly without
any difficulty. In fact, by Burkholder-David-Gundy inequality (of
course in the backward sense), we have
$$\begin{array}{lll}
&&E[\displaystyle|\int_t^s \int_{\E}  \nabla {\beta} (X_{r,s}^{-1}
(y),e) \nabla X_{t,r}^{-1}(y) \widetilde{\mu}
(\overleftarrow{dr},de)|^2]\\
  &\leq& C E[\integ{t}{s}\int_{\E}|\nabla {\beta}(X^{-1}_{r,s}(y),e)\nabla X^{-1}_{r,s}(y)|^2\lambda(de)dr]\\
  &\leq& C_1E[\integ{t}{s}\int_{\E}(1\wedge|e|)^2|\nabla X^{-1}_{r,s}(y)|^2\lambda(de)dr]\\
  &=&C_1E[\integ{t}{s}|\nabla X^{-1}_{r,s}(y)|^2\int_{\E}(1\wedge|e|^2)\lambda(de)dr]\\
  &\leq& K(s-t),
\end{array}$$
since $E[|\nabla X^{-1}_{r,s}(y)|^2]<\infty$. Therefore,
$E[|\Gamma^{t,y}_s|^2]\leq 4K(s-t)$. Hence, $$1-2\sqrt{K(s-t)}\leq
E[J(X^{-1}_{t,s}(y))]\leq 1+2\sqrt{K(s-t)}.$$ and the desired result
follows.

\subsection{Regularity of the solution of BSDEs with jumps}
\label{appendix:regularityBSDE} In this section, we are going to prove 
regularity  results for the solution of BSDEs with jumps  with respect to the parameter $(t,x)$ in
order to relate the solution of BSDEs to the classic solution of
PIDEs.  We note that some part of the results given in this section were established in a preprint  of Buckdahn and Pardoux  (1994) \cite{bp}. However, for convenience of the reader and for completeness of the paper, we give the whole proofs.
We first start by giving the $L^p$-estimates  for the solution of the following BSDEs with jumps: 
\begin{equation}\label{bdsde}
Y_t=\xi+\int_t^T f(s,Y_s,Z_s,V_s)ds-\int_t^T
Z_sdW_s-\int_t^T\int_{\E} V_s(e)\widetilde\mu(ds,de).
\end{equation}
\begin{theorem} Assume that $f$ is uniformly Lipschitz with respect to $(y,z,v)$ and
additionally that for some $p\geq 2$, $\xi\in L^{p}_m({\cal F}_T)$ and 
\beq\label{inteoff}E\int_0^T|f(t,0,0,0)|^{p}dt<\infty.\eeq Then 
\begin{equation*} E\bigg[\sup\limits_{t}|Y_t|^{p}+\bigg(\int_0^T|Z_t|^2dt\bigg)^{p/2}+\bigg(\int_0^T  \Big(\int_{\E} |V_t(e)|^2  \lambda (de)\Big) dt\bigg)^{p/2}\bigg]<\infty.\end{equation*}
\end{theorem}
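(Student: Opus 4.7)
The plan is to establish the a priori bound by applying It\^o's formula to $|Y|^p$ and combining it with the Burkholder--Davis--Gundy inequality, Young's inequality, and Gronwall's lemma. Because $x\mapsto |x|^p$ is not smooth at zero when $p$ is not an even integer, I would first apply It\^o to the regularized function $(|Y_s|^2+\varepsilon)^{p/2}$ and pass to the limit $\varepsilon \downarrow 0$. Between times $t$ and $T$ this produces
\begin{equation*}
|Y_t|^p + \frac{p(p-1)}{2}\int_t^T |Y_s|^{p-2}\mathbf{1}_{\{Y_s\neq 0\}}|Z_s|^2\,ds + J_t = |\xi|^p + p\int_t^T |Y_s|^{p-2} Y_s\cdot f(s,Y_s,Z_s,V_s)\,ds - N_T+N_t,
\end{equation*}
where $J_t$ collects the jump contributions coming from $\widetilde\mu$ and $N$ is a local martingale driven by $dW$ and $\widetilde\mu$.

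Next I would estimate the right-hand drift by using the Lipschitz hypothesis on $f$ to write $|f(s,Y_s,Z_s,V_s)|\leq |f(s,0,0,0)|+C(|Y_s|+|Z_s|+\|V_s\|)$ and absorb the $|Y_s|^{p-1}|Z_s|$ and $|Y_s|^{p-1}\|V_s\|$ terms via Young's inequality against $|Y_s|^{p-2}|Z_s|^2$ and the jump bracket. The key ingredient for $J_t$ is the elementary inequality
\begin{equation*}
|y+v|^p-|y|^p-p|y|^{p-2}y\cdot v \;\geq\; c_p\,\bigl(|y|^2\vee|v|^2\bigr)^{p/2-1}|v|^2,
\end{equation*}
valid for $p\geq 2$ with $c_p>0$, which, after compensating $\mu$ by $\widetilde\mu+\lambda dt$, provides a nonnegative contribution controlling $\int_t^T\!\!\int_{\E}(|Y_{s-}|^2\vee|V_s(e)|^2)^{p/2-1}|V_s(e)|^2\lambda(de)\,ds$ from below.

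Taking the supremum over $r\in[t,T]$ and expectation, I would then invoke the Burkholder--Davis--Gundy inequality on the continuous and purely discontinuous pieces of $N$: the Brownian piece is dominated by $C\,E\bigl(\int_t^T |Y_s|^{2(p-1)}|Z_s|^2\,ds\bigr)^{1/2}\leq \eta\,E\sup_r|Y_r|^p+C_\eta E\int_t^T |Y_s|^{p-2}|Z_s|^2\,ds$, and the jump piece is treated analogously. Choosing $\eta$ small enough to reabsorb into the left-hand side and applying Gronwall's lemma to $\varphi(t):=E\sup_{r\in[t,T]}|Y_r|^p$ yields $E\sup|Y|^p<\infty$. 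The bounds on $Z$ and $V$ then follow by rearranging the It\^o identity at $t=0$, taking expectation, and applying BDG once more to control the stochastic integrals by the already estimated $E\sup|Y|^p$ and the data $\xi$, $f(\cdot,0,0,0)$.

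The main obstacle will be the careful bookkeeping around the jump contribution $J_t$: the displayed inequality yields a mixed lower bound of the form $(|Y_{s-}|\vee|V_s(e)|)^{p-2}|V_s(e)|^2$ rather than $|V_s(e)|^p$ or $|Y_{s-}|^{p-2}|V_s(e)|^2$ outright, so recovering the full $\mathcal{L}^p$-norm of $V$ requires splitting on $\{|V_s(e)|\leq |Y_{s-}|\}$ versus its complement and treating each regime separately. A secondary technical point is to verify that $N$ is a genuine (not merely local) martingale after taking expectations; this is handled by a standard localization along the stopping times $\tau_k=\inf\{s:|Y_s|+\int_0^s|Z_r|^2 dr+\int_0^s\|V_r\|^2 dr\geq k\}$ and passing to the limit with Fatou's lemma.
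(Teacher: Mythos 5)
Your argument is correct in outline, but it follows a genuinely different route from the paper's. The paper never applies It\^o's formula to $|Y|^p$ directly: it first writes the semimartingale decomposition of $|Y_t|^2$ and then applies It\^o's formula to $\phi_{n,p}(|Y_t|^2)$, where $\phi_{n,p}(x)=(x\wedge n)^p+pn^{p-1}(x-n)^+$ is a $C^1$ truncation of $x^p$ with \emph{bounded} derivative. This truncation replaces your stopping-time localization (all stochastic integrals are genuine martingales at once), and — more importantly — the weighted term $\phi_{n,p}'(|Y_s|^2)\,\|V_s\|^2\,ds$ appears automatically on the good side of the identity, because $\|V_s\|^2 ds$ is already part of the finite-variation component of $|Y|^2$. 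Consequently the paper can simply discard the jump convexity correction as nonnegative, whereas your single-stage approach must extract the $|Y_{s-}|^{p-2}|V_s(e)|^2$ control from that correction via the quantitative lower bound $|y+v|^p-|y|^p-p|y|^{p-2}y\cdot v\geq c_p(|y|\vee|v|)^{p-2}|v|^2$; that lemma is true (and standard in the $L^p$-BSDE literature), and note that since $(|y|\vee|v|)^{p-2}\geq|y|^{p-2}$ the case splitting you worry about is not even needed for the absorption step. The paper then gets $\sup_tE|Y_t|^{2p}<\infty$ by Gronwall and Fatou, upgrades to $E\sup_t|Y_t|^{2p}<\infty$ in a second pass, and — exactly as in your last step — obtains the $Z$ and $V$ bounds from the equation itself via BDG, though it does so on subintervals with $C_p(t-s)^{p/2}<1$ to absorb the $\big(\int(|Z_r|+\|V_r\|)dr\big)^p$ term coming from the Lipschitz bound on $f$; your sketch should make that absorption explicit. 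Two harmless imprecisions in your write-up: for $p\geq2$ the map $x\mapsto|x|^p$ is already $C^2$, so the $\varepsilon$-regularization is unnecessary; and in the vector-valued case the Brownian bracket term is $\frac p2\int\big(|Y_s|^{p-2}|Z_s|^2+(p-2)|Y_s|^{p-4}|Z_s^*Y_s|^2\big)ds$ rather than $\frac{p(p-1)}2\int|Y_s|^{p-2}|Z_s|^2ds$, though only the lower bound $\frac p2|Y_s|^{p-2}|Z_s|^2$ is used.
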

\begin{proof}
We follow the idea of Buckdahn and Pardoux \cite{bp}. The proof is divided into 3 steps.\\ 
\vspace{0.2mm}
Step 1:
From (\ref{bdsde}),
\begin{equation*}
Y_t=Y_0-\int_0^t f(s,Y_s,Z_s,V_s)ds+\int_0^tZ_sdW_s+\int_0^t\int_{\E} V_s(e)\widetilde\mu(ds,de).
\end{equation*}
Then by It\^{o}'s formula,
\beqn
\displaystyle|Y_t|^2&=&|Y_0|^2-2 \int_0^t(f(s,Y_s,Z_s,V_s),Y_s)ds+\int_0^t(|Z_s|^2+\|V_s\|^2)ds\\
&+&2\int_0^t(Y_s,Z_sdW_s)+\int_0^t\int_{\E}(|Y_{s-}+V_s(e)|^2-|Y_{s-}|^2)\widetilde\mu(ds,de).
\eeqn

Let $\phi_{n,p}(x)=(x\wedge n)^p+pn^{p-1}(x-n)^+$ for all $p\geq 1$. Then $\phi_{n,p}\in C^{1}(\mathbb R_+)$, $\phi'_{n,p}(x)=p(x\wedge n)^{p-1}$ bounded and absolutely continuous with
$$\phi''_{n,p}(x)=p(p-1)(x\wedge n)^{p-2}\textbf{1}_{[0,n]}(x).$$

Again by applying It\^{o}'s formula to $\phi_{n,p}(|Y_t|^2)$,
\beqn
\phi_{n,p}(|Y_T|^2)&=&\phi_{n,p}(|Y_t|^2)-2\int_t^T\phi'_{n,p}(|Y_s|^2)(f(s,Y_s,Z_s,V_s),Y_s)ds
+2\int_t^T\phi'_{n,p}(|Y_s|^2)(Y_s,Z_sdW_s)\\&+&\int_t^T\phi''_{n,p}(|Y_s|^2)(Z_sZ_s^\ast Y_s,Y_s)ds
+\int_t^T\phi'_{n,p}(|Y_s|^2)|Z_s|^2ds+\int_t^T\phi'_{n,p}(|Y_s|^2)\|V_s\|^2ds\\
&+&\int_t^T\int_{\E}[\phi_{n,p}(|Y_{s-}+V_s(e)|^2)-\phi_{n,p}(|Y_{s-}|^2)]\widetilde\mu(ds,de)\\
&+&\int_t^T\int_{\E}[\phi_{n,p}(|Y_{s-}+V_s(e)|^2)-\phi_{n,p}(|Y_{s-}|^2)]-(|Y_{s-}+V_s(e)|^2-|Y_{s-}|^2)\phi'_{n,p}(|Y_{s-}|^2)\lambda(de)ds.
\eeqn
Since $(Y,Z,V)\in\mathcal B^2$, it follows from Burkholder-Davis-Gundy's inequality,
$$\displaystyle E\left[\underset{0\leq t\leq T}{\text{sup}}\left|\int_0^t\phi'_{n,p}(|Y_s|^2)(Y_s,Z_sdW_s)\right|\right]\leq Cpn^{p-1}\left\|Y\right\|_{\mathcal S^2}\left\|Z\right\|_{\mathcal H^2},$$
that the $dW$ integral above is uniformly integrable, hence it is a martingale with zero expectation. From the boundedness of $\phi'_{n,p}$, the integrand in the $\widetilde{\mu}$-integral can be written as follows
$$\phi_{n,p}(|Y_{s-}+V_s(e)|^2)-\phi_{n,p}(|Y_{s-}|^2)= \psi_s(e)(2(Y_{s-},V_s(e))+|V_{s}(e)|^2),$$
where $\psi_s(e)$ is a bounded and predictable process. By BDG's inequality
\beqn
\displaystyle E\left[\underset{0\leq t\leq T}{\text{sup}}\left|\int_0^t\int_{\E}\psi_s(e)(Y_{s-},V_s(e))\widetilde{\mu}(ds,de)\right|\right]&\leq& cE\left[\bigg(\int_0^T\int_{\E}|\psi_s(e)(Y_{s-},V_s(e))|^2\mu(ds,de)\bigg)^{1/2}\right]\\
&\leq&cpn^{p-1}\left\|Y\right\|_{\mathcal S^2}\left\|V\right\|_{\mathcal L^2},
\eeqn
and by the decomposition $\widetilde{\mu}(ds,de)=\mu(ds,de)-ds\lambda(de)$,
\beqn
\displaystyle E\left[\underset{0\leq t\leq T}{\text{sup}}\left|\int_0^t\int_{\E}\psi_s(e)|V_s(e)|^2\widetilde{\mu}(ds,de)\right|\right]&\leq& 2pn^{p-1}E\left[\int_0^T\int_{\E}|V_s(e)|^2\lambda(de)ds\right]\\
&=&2pn^{p-1}\left\|V\right\|_{\mathcal L^2}^2.
\eeqn
We have also
$$\displaystyle E\left[\int_0^t\int_{\E}(\phi_{n,p}(|Y_{s-}+V_s(e)|^2)-\phi_{n,p}(|Y_{s-}|^2))\widetilde{\mu}(ds,de)\right]=0.$$
From Taylor's expansion of $\phi_{n,p}$ and the positivity of $\phi''_{n,p}$, we conclude
$$\int_t^T\int_{\E}[\phi_{n,p}(|Y_{s-}+V_s(e)|^2)-\phi_{n,p}(|Y_{s-}|^2)-(|Y_{s-}+V_s(e)|^2-|Y_{s-}|^2)\phi'_{n,p}(|Y_{s-}|^2)]\lambda(de)ds\geq 0.$$
Using again $\phi''_{n,p}\geq 0$ we conclude that:
\beqn
E\phi_{n,p}(|Y_t|^2)&+&E\int_t^T\phi'_{n,p}(|Y_s|^2)(|Z_s|^2+||V_s||^2)ds\\
&\leq&E\phi_{n,p}(|\xi|^2)+2E\int_t^T\phi'_{n,p}(|Y_s|^2)(f(s,Y_s,Z_s,V_s),Y_s)ds\\
&\leq&E\phi_{n,p}(|\xi|^2)+CE\int_t^T\phi'_{n,p}(|Y_s|^2)|Y_s|(|f(s,0,0,0)|+|Y_s|+|Z_s|+\|V_s\|)ds.
\eeqn
Hence we deduce that 
\beqn
E\phi_{n,p}(|Y_t|^2)&+&\frac{1}{2}E\int_t^T\phi'_{n,p}(|Y_s|^2)(|Z_s|^2+\|V_s\|^2)ds\\
&\leq&E\phi_{n,p}(|\xi|^2)+CE\int_t^T\phi'_{n,p}(|Y_s|^2)(|f(s,0,0,0)|^2+|Y_s|^2)ds\\
&\leq&E\phi_{n,p}(|\xi|^2)+CE\int_t^T(|f(s,0,0,0)|^{2p}+\phi'_{n,p}(|Y_s|^2)^{\frac{p}{p-1}}+\phi'_{n,p}(|Y_s|^2)|Y_s|^2)ds\\
&\leq&E\phi_{n,p}(|\xi|^2)+CE\int_t^T(|f(s,0,0,0)|^{2p}+\phi_{n,p}(|Y_s|^2))ds.
\eeqn
Then it follows from Gronwall's lemma that there exists a constant $C(p,T)$ independent of $n$ such that 
$$\underset{0\leq t\leq T}{\text{sup}}E\phi_{n,p}(|Y_t|^2)\leq C(p,T)E[|\xi|^{2p}+\int_0^T|f(t,0,0,0)|^{2p}dt],$$
hence from Fatou's lemma 
\beq\label{controlY}
\underset{0\leq t\leq T}{\text{sup}}E[|Y_t|^{2p}]<\infty,
\eeq
and also
\beq\label{controlZV}
E\int_0^T|Y_s|^{2(p-1)}(|Z_s|^2+\|V_s\|^2)ds<\infty,
\eeq
\beq
\label{controlV}
E\int_0^T\int_{\E}[|Y_{s-}+V_s(e)|^{2p}-|Y_{s-}|^{2p}-p(|Y_{s-}+V_s(e)|^2-|Y_{s-}|^2)|Y_{s-}|^{2(p-1)}]\lambda(de)ds<\infty ,
\eeq
and this holds for any $p \geq 1$.\\
\vspace{0.5mm}
Step 2: Now, again from It\^o's formula,
\beqn
\displaystyle|Y_T|^{2p}&\geq&|Y_t|^{2p}-2p\int_t^T|Y_s|^{2(p-1)}(f(s,Y_s,Z_s,V_s),Y_s)ds\\
&+&p\int_t^T|Y_s|^{2(p-1)}(|Z_s|^2+\|V_s\|^2)ds+2\int_t^T|Y_s|^{2(p-1)}(Y_s,Z_sdW_s)\\
&+&\int_t^T\int_{\E}(|Y_{s-}+V_s(e)|^{2p}-|Y_{s-}|^{2p})\widetilde\mu(ds,de).
\eeqn

It follows from (\ref{controlY}) and (\ref{controlZV}) that the above $dW$-integral is a uniformly integral martingale, and from (\ref{controlZV}) and (\ref{controlV}) that the $\widetilde{\mu}$-integral is a uniformly integrable martingale. It is then easy to conclude that 
$$E[\underset{0\leq t\leq T}{\text{sup}}|Y_t|^{2p}]<\infty,\ p\geq 1.$$
\vspace{0.2mm}
Step 3: Finally, 
\begin{equation*}
\int_s^tZ_rdW_r+\int_s^t\int_{\E} V_r(e)\widetilde\mu(dr,de)=Y_t-Y_s+\int_s^t f(r,Y_r,Z_r,V_r)dr,
\end{equation*}
and from BDG inequality, (\ref{controlY}) and (\ref{inteoff}), for any $p\geq 2$, there exists $C_p$ such that for all $0\leq s\leq t\leq T$, $n\geq 1$ if $\tau_n=\text{inf}\{u\geq s, \int_s^u(|Z_r|^2+\|V_r\|^2)dr\geq n\}\wedge t $,
\beqn
\displaystyle E\left[(\int_s^{\tau_n}(|Z_r|^2+\|V_r\|^2)dr)^{p/2}\right]&\leq&C_p E\left[1+(\int_s^{\tau_n}(|Z_r|+\|V_r\|)dr)^{p}\right]\\
&\leq&C_p (1+(t-s)^{p/2}E\left[(\int_s^{\tau_n}(|Z_r|^2+\|V_r\|^2)dr)^{p/2}\right].
\eeqn
Hence if $C_p (t-s)^{p/2}<1,\ n\geq 1$, $\displaystyle E\left[(\int_s^{\tau_n}(|Z_r|^2+\|V_r\|^2)dr)^{p/2}\right]\leq \frac{C_p}{1-C_p(t-s)^{p/2}}$. It clearly follows that
$$\displaystyle E\left[\bigg(\int_0^T|Z_t|^2dt\bigg)^{p/2}+\bigg(\int_0^T\|V_t\|^2dt\bigg)^{p/2}\bigg)\right]<\infty, \ p\geq 2.$$
\end{proof}
\medskip
From now on, we denote by $\Sigma=(Y,Z,V)$ and ${\mathcal B}^p$ the
space of solutions, {\it i.e.},\beqn ||\Sigma||^p_{{\mathcal
B}^p}\triangleq
E\bigg[\sup\limits_{t}|Y_t|^p+\bigg(\int_0^T|Z_t|^2dt\bigg)^{p/2}+\bigg(\int_0^T||V_t||^2dt\bigg)^{p/2}\bigg].\eeqn
In the sequel, we will consider a specific class of BSDE where
\beqn\xi=g(X_{t,T}(x))
\,\,\mbox{and}\,\,f(s,y,z,v)=f(s,X_{t,s}(x),y,z,v)\eeqn and we assume 
$$\begin{array}{cc}
\,\,\,\,\mathbf{(H)}   \left\{\begin{array}{l}
g\in C_p^3({\bbR^d;\bbR^m}),\\
\forall s\in[0,T], (x,y,z,v)\mapsto f(s,x,y,z,v)\in C^3 \\\mbox{ and
all their derivatives are bounded.}
\end{array}
\right.
\end{array}$$
Note that $f$ is differentiable w.r.t. $v$ in the sens of
Fr\'echet and its Fr\'echet differential is bounded with the norm
in $L^2(\E,\lambda; \mathbb R^k)$.
Let $(Y_s^{t,x},Z_s^{t,x},V_s^{t,x})_{t\leq s\leq T}$ denote the
unique solution of the following BSDE:
\begin{equation}\label{bdsde4}
  \begin{array}{ll}
\displaystyle Y_s^{t,x}=g(X_{t,T}(x))+\int_s^T
f(r,X_{t,r}(x),Y_r^{t,x},Z_r^{t,x},V_r^{t,x})dr-\int_s^T
Z_r^{t,x}dW_r-\int_s^T\int_{\E} V_r^{t,x}(e)\widetilde\mu(dr,de).
  \end{array}
\end{equation}
It follows easily from the existence result in \cite{bbp97}:
\begin{corollary}
 For each $t\in[0,T]$, $x\in\bbR^d$, the BSDE(\ref{bdsde4}) has a
 unique solution $$\Sigma^{t,x}=(Y^{t,x},Z^{t,x},V^{t,x})\in {\cal B}^2,$$ and
 $Y_t^{t,x}$ defines a deterministic mapping from
 $[0,T]\times\bbR^d$ into $\bbR^m$.
\end{corollary}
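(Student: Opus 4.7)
The plan is to deduce the corollary directly from the general BSDE existence/uniqueness result together with the $L^p$-moment theorem established just before it, and to obtain the deterministic character of $Y_t^{t,x}$ from a standard independence argument. First I would verify that under hypothesis $(\mathbf{H})$ the data $\bigl(g(X_{t,T}(x)),\,f(\cdot,X_{t,\cdot}(x),\cdot,\cdot,\cdot)\bigr)$ satisfy the standing assumptions of the BSDE theory. Since $g\in C^3_p(\mathbb R^d;\mathbb R^m)$ grows at most polynomially and Proposition \ref{estimatesde} yields $E\sup_{t\le r\le T}|X_{t,r}(x)|^p<\infty$ for all $p\ge 2$, one obtains $g(X_{t,T}(x))\in\mathbf L^p_m(\mathcal F_T)$ for every $p\ge 2$. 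Similarly, the boundedness of the derivatives of $f$ implies that $f$ is uniformly Lipschitz in $(y,z,v)$ and that $f(s,X_{t,s}(x),0,0,0)$ has finite $L^p$-moments uniformly in $s$, which gives \eqref{inteoff}.

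Next I would invoke the existence and uniqueness theorem for BSDEs with jumps under Lipschitz drivers (as in Barles--Buckdahn--Pardoux \cite{bbp97} or Tang--Li) to obtain a unique adapted triple $(Y^{t,x},Z^{t,x},V^{t,x})$ in $\mathcal S^2\times\mathcal H^2\times\mathcal L^2$ solving \eqref{bdsde4}. Combining this with the $L^p$-estimates proven in the theorem at the beginning of Appendix \ref{appendix:regularityBSDE} (applied with $p=2$) upgrades the solution to the space $\mathcal B^2$, which yields the first assertion $\Sigma^{t,x}\in\mathcal B^2$.

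For the second assertion, that $Y_t^{t,x}$ is a deterministic function of $(t,x)$, I would argue by the independence of future increments. Let $\mathcal G^t_s:=\sigma\{W_r-W_t,\,\widetilde\mu((t,u]\times A):\,t\le r\le s\le u,\,A\in\mathcal B_{\mathbb E}\}$ for $s\ge t$, which is independent of $\mathcal F_t$. The forward flow $X_{t,\cdot}(x)$ solves an SDE driven only by these increments starting from the deterministic point $x$, so $X_{t,\cdot}(x)$ is $\mathcal G^t_T$-adapted. Running the Picard iteration that defines the BSDE solution on the sub-filtration $\mathcal G^t_\cdot$ produces a triple that is $\mathcal G^t_\cdot$-adapted and solves \eqref{bdsde4}; by uniqueness in $\mathcal B^2$ this triple coincides with $(Y^{t,x},Z^{t,x},V^{t,x})$. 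In particular $Y_t^{t,x}$ is $\mathcal G^t_t$-measurable, hence independent of $\mathcal F_t$. On the other hand $Y^{t,x}$ being $\mathbb F$-adapted forces $Y_t^{t,x}$ to be $\mathcal F_t$-measurable; an $\mathcal F_t$-measurable random variable independent of $\mathcal F_t$ is a.s. constant, which yields the deterministic mapping $(t,x)\mapsto Y_t^{t,x}$.

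The argument is essentially a bookkeeping exercise, so there is no substantive obstacle; the only point requiring a little care is the independence-of-future-increments step, which must be carried out on the enlarged filtration generated by the Brownian motion \emph{and} the Poisson random measure together, since both drive the forward and backward equations.
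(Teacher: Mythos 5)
Your proposal is correct and follows the same route the paper intends: the paper simply derives the corollary from the existence and uniqueness result of Barles--Buckdahn--Pardoux \cite{bbp97} for Lipschitz BSDEs with jumps, combined with the $L^p$-estimates just established, and the deterministic character of $Y_t^{t,x}$ is the standard consequence of the fact that $(X_{t,\cdot}(x),Y^{t,x},Z^{t,x},V^{t,x})$ is adapted to the filtration generated by the increments of $W$ and $\mu$ after time $t$, which is independent of $\mathcal F_t$. Your write-up merely makes explicit the details the paper leaves implicit (including the point that the sub-filtration must carry both the Brownian increments and the Poisson random measure), so no further comment is needed.
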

Now we are going to deal with the regularity of the solution with
respect to the parameter $x$. Let us establish the following proposition:
\begin{proposition}
Under the assumption in the previous theorem and assume moreover
that {\bf{(H)}} holds. Then, for any $p\geq 2$, there exists $C_p$,
$q$ such that for any $0\leq t\leq T$, $x,x'\in \bbR^d$,
$h,h'\in\bbR\setminus\{0\}$, $1\leq i\leq d$,\begin{enumerate}
                                               \item[(i)]$\|\Sigma^{t,x}-\Sigma^{t,x'}\|_{\cB^p}^p\leq
                                               C_p(1+|x|+|x'|)^q|x-x'|^p$;

                                               \item[(ii)]$\|\Delta_h^i\Sigma^{t,x}-\Delta_{h'}^i\Sigma^{t,x'}\|_{\cB^p}^p\leq
C_p(1+|x|+|x'|+|h|+|h'|)^q(|x-x'|^p+|h-h'|^p)$
                                             \end{enumerate}

where
$\Delta_h^i\Sigma_s^{t,x}=\frac{1}{h}(Y_s^{t,x+he_i}-Y_s^{t,x},
Z_s^{t,x+he_i}-Z_s^{t,x},V_s^{t,x+he_i}-V_s^{t,x})$, and
$(e_1,\cdots,e_d)$ is an orthonormal basis of $\bbR^d$.
\end{proposition}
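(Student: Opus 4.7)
The plan is to reduce both (i) and (ii) to the $L^p$-estimate for BSDEs with jumps proved at the start of this appendix, applied to the BSDEs satisfied by the difference processes, and to import the forward-SDE regularity from Proposition \ref{estimatesde}.

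For (i), set $(\bar Y,\bar Z,\bar V) := (Y^{t,x}-Y^{t,x'},\,Z^{t,x}-Z^{t,x'},\,V^{t,x}-V^{t,x'})$ and $\bar X_r := X_{t,r}(x)-X_{t,r}(x')$. Subtracting the two BSDEs, the triple solves a BSDE with terminal datum $\bar g := g(X_{t,T}(x))-g(X_{t,T}(x'))$ and driver
\[
F_r := f(r,X_{t,r}(x),Y_r^{t,x},Z_r^{t,x},V_r^{t,x}) - f(r,X_{t,r}(x'),Y_r^{t,x'},Z_r^{t,x'},V_r^{t,x'}).
\]
Under \textbf{(H)}, a chain-rule linearization (subtracting and adding intermediate values) yields $F_r = \varphi_r\,\bar X_r + \alpha_r\,\bar Y_r + \beta_r\,\bar Z_r + \langle \gamma_r,\bar V_r\rangle_\lambda$ with bounded predictable coefficients. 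Applying the $L^p$-estimate to this linear BSDE gives
\[
\|(\bar Y,\bar Z,\bar V)\|_{\mathcal B^p}^p \leq C_p\, E\Bigl[|\bar g|^p + \Bigl(\int_t^T |\bar X_r|\,dr\Bigr)^p\Bigr].
\]
Since $g\in C_p^3$ has polynomial-growth derivatives, $|\bar g|\leq C(1+|X_{t,T}(x)|+|X_{t,T}(x')|)^{q_1}|X_{t,T}(x)-X_{t,T}(x')|$, and combining H\"older's inequality with the first two moment bounds of Proposition \ref{estimatesde} (i.e.\ $E\sup_r|X_{t,r}(x)|^{p'}\leq C(1+|x|^{p'})$ and $E\sup_r|\bar X_r|^{p'}\leq C|x-x'|^{p'}$ for any $p'\geq 2$) delivers (i).

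For (ii) the same scheme is iterated at the level of discrete derivatives. The increment $\Delta_h^i\Sigma^{t,x}$ itself solves a BSDE with terminal $\Delta_h^i[g(X_{t,T}(x))]$ and driver the corresponding $\Delta_h^i$-increment of $f$ along $\Sigma^{t,x}$; the argument of (i), together with the third bound of Proposition \ref{estimatesde}, yields $\|\Delta_h^i\Sigma^{t,x}\|_{\mathcal B^p}^p\leq C_p(1+|x|)^q$. Taking the further difference $\Delta_h^i\Sigma^{t,x}-\Delta_{h'}^i\Sigma^{t,x'}$, a second linearization produces a linear BSDE whose source is driven by $\Delta_h^i X_{t,r}(x) - \Delta_{h'}^i X_{t,r}(x')$ together with terms in $\bar X_r,\bar Y_r,\bar Z_r,\bar V_r$ that are already controlled by step (i), with bounded coefficients. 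The $L^p$-estimate applied a second time, combined with the fourth bound of Proposition \ref{estimatesde},
\[
E\bigl[\sup_{t\leq r\leq s}|\Delta_h^i X_{t,r}(x) - \Delta_{h'}^i X_{t,r}(x')|^p\bigr] \leq M_p(s-t)(|x-x'|^p + |h-h'|^p),
\]
and the $C_p^3$-regularity of $g$ and $f$ to control the second differences of the terminal condition and driver by first-order differences of $X$ and its $\Delta$-derivatives (with polynomial-growth prefactors), yields (ii).

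The main technical obstacle is the linearization in the jump component $v$, which takes values in the infinite-dimensional space $L^2_\lambda$. One must invoke the Fr\'echet-differentiability of $f$ in $v$ (part of \textbf{(H)}) to write, by a mean-value theorem along a segment in $L^2_\lambda$,
\[
f(r,x,y,z,V)-f(r,x,y,z,V') = \int_0^1 (\partial_v f)(r,x,y,z,\theta V+(1-\theta)V')[V-V']\,d\theta,
\]
and then identify the $\widetilde{\mathcal P}$-measurable kernel $\gamma_r(e)$ representing the bounded linear operator $\partial_v f$ on $L^2_\lambda$, so that the linearized BSDE for the differences still fits the hypotheses of the $L^p$-theorem proved above. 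Once this is settled, the polynomial weights $(1+|x|+|x'|)^q$ in (i) and $(1+|x|+|x'|+|h|+|h'|)^q$ in (ii) emerge naturally by combining the polynomial growth of $g,\nabla g,\nabla^2 g$ with the moment estimates of Proposition \ref{estimatesde}.
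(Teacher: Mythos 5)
Your proposal is correct and follows essentially the same route as the paper: linearize the difference (respectively the difference of discrete derivatives) of the BSDEs by a mean-value integral along the segment $\theta\Sigma^{t,x}+(1-\theta)\Sigma^{t,x'}$, apply the $L^p$-estimate of the appendix to the resulting linear BSDE, and control the terminal datum and source term via the polynomial growth of $g$ and its derivatives together with the moment bounds of Proposition \ref{estimatesde}, iterating once more for (ii) with the second-difference source controlled by part (i). The point you flag about representing the Fr\'echet derivative of $f$ in $v$ by a kernel $\eta_r$ acting on $\mathbf{L}^2_\lambda$ is exactly how the paper handles the jump component as well.
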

\noindent
\begin{proof} Note that after applying $L^p$-estimation
of the solution to the present situation, we can deduce that
$\forall p\geq 2$, there exist $C_p$, $q$ such that 
\beqn
E\bigg[\sup\limits_{s}|Y_s^{t,x}|^p+\bigg(\int_t^T|Z_s^{t,x}|^2ds\bigg)^{p/2}+\bigg(\int_t^T\|V_s^{t,x}\|^2ds\bigg)^{p/2}\bigg]
\leq C_p(1+|x|^q). \eeqn
For $t\leq s\leq T$, 
\beqn
        Y_s^{t,x}-Y_s^{t,x'}
       &=&g(X_{t,T}(x))-g(X_{t,T}(x'))\\&+&\int_s^T(f(r,X_{t,r}(x),Y_r^{t,x},Z_r^{t,x},V_r^{t,x})-f(r,X_{t,r}(x'),Y_r^{t,x'},Z_r^{t,x'},V_r^{t,x'}))dr\\
        &-&\int_s^T(Z_r^{t,x}-Z_r^{t,x'})dW_r-\int_s^T\int_{\E}(V_r^{t,x}(e)-V_r^{t,x'}(e))\widetilde\mu(dr,de)\\
       &=&\int_0^1g'(\theta X_{t,T}(x)+(1-\theta)X_{t,T}(x'))(X_{t,T}(x)-X_{t,T}(x'))d\theta\\&+&\int_s^T\bigg[\varphi_r(t,x,x')(X_{t,r}(x)-X_{t,r}(x'))
        +\psi_r(t,x,x')(Y_r^{t,x}-Y_r^{t,x'})\\&+&\xi_r(t,x,x')(Z_r^{t,x}-Z_r^{t,x'})+\langle\eta_r(t,x,x'),(V_r^{t,x}-V_r^{t,x'})\rangle\bigg]dr\\
          &-&\int_s^T(Z_r^{t,x}-Z_r^{t,x'})dW_r-\int_s^T\int_{\E}(V_r^{t,x}(e)-V_r^{t,x'}(e))\widetilde\mu(dr,de),
\eeqn where 
$$\begin{array}{cc}
\displaystyle\varphi_r(t,x,x')=\int_0^1\frac{\partial f}{\partial
x}(\Xi_{r,\theta}^{t,x,x'})d\theta,\quad
\displaystyle\psi_r(t,x,x')=\int_0^1\frac{\partial f}{\partial y}(\Xi_{r,\theta}^{t,x,x'})d\theta\\
\displaystyle\xi_r(t,x,x')=\int_0^1\frac{\partial f}{\partial
z}(\Xi_{r,\theta}^{t,x,x'})d\theta,\quad
\displaystyle\eta_r(t,x,x')=\int_0^1\frac{\partial f}{\partial
v}(\Xi_{r,\theta}^{t,x,x'})d\theta,\\
\Xi_{r,\theta}^{t,x,x'}=\theta\Sigma_r^{t,x}+(1-\theta)\Sigma_r^{t,x'},
\end{array}$$
and $\langle\cdot,\cdot\rangle$ stands for the scalar product in
$L^2(\E,\lambda; \bbR^m)$.\smallskip
Then given the boundedness of the derivatives, the previous
proposition and \beqn |\varphi_r(t,x,x')|\leq
C(1+|X_{t,r}(x)|+|X_{t,r}(x')|)^q,\eeqn we know that $(i)$ holds
true after applying the $L^p$-estimates of the solution.\smallskip
Now we turn to $(ii)$. In fact, we have \beqn
        \Delta_h^iY_s^{t,x} &=&\frac{1}{h}(Y_s^{t,x+he_i}-Y_s^{t,x})\\
       &=&\int_0^1g'(\theta X_{t,T}(x+he_i)+(1-\theta)X_{t,T}(x))\Delta_h^iX_{t,T}(x)d\theta\\
        &+&\int_s^T\bigg[\varphi_r(t,x+he_i,x)\Delta_h^iX_{t,r}(x)+\psi_r(t,x+he_i,x)\Delta_h^iY_r^{t,x}\\
        &+&\xi_r(t,x+he_i,x)\Delta_h^iZ_r^{t,x}
        +\langle \eta_r(t,x+he_i,x),\Delta_h^iV_r^{t,x}\rangle\bigg]dr\\
        &-&\int_s^T\Delta_h^iZ_r^{t,x}dW_r-\int_s^T\int_{\E}\Delta_h^iV_r^{t,x}(e)\widetilde\mu(dr,de).
\eeqn Note that for each $p\geq 2$, there exists $C_p$ such that
$E[\sup\limits_{s}|\Delta_h^iX_{t,s}(x)|^p]\leq C_p$. Then same
calculation as in $(i)$ implies that \beqn \|\Delta_h^i
\Sigma^{t,x}\|^p_{{\mathcal B}^p} \leq C_p(1+|x|^q+|h|^q). \eeqn
Finally, we consider \beqn
        \Delta_h^iY_s^{t,x}-\Delta_{h'}^iY_s^{t,x'}
       &=& \Gamma^{i,t,s}(h,x;h',x')\\
        &+&\int_s^T\bigg[\varphi_r(t,x+he_i,x)(\Delta_h^iX_{t,r}(x)-\Delta_{h'}^iX_{t,r}(x'))+
        \psi_r(t,x+he_i,x)(\Delta_h^iY_r^{t,x}-\Delta_{h'}^iY_r^{t,x'})\\
        &+&\xi_r(t,x+he_i,x)(\Delta_h^iZ_r^{t,x}-\Delta_{h'}^iZ_r^{t,x'})
        +\langle \eta_r(t,x+he_i,x),\Delta_h^iV_r^{t,x}-\Delta_{h'}^iV_r^{t,x'}\rangle\bigg]dr\\
         & - &\int_s^T(\Delta_h^iZ_r^{t,x}-\Delta_{h'}^iZ_r^{t,x'})dW_r
        -\int_s^T\int_{\E}(\Delta_h^iV_r^{t,x}(e)-\Delta_{h'}^iV_r^{t,x'}(e))\widetilde\mu(dr,de),\eeqn
where 
\beqn
\Gamma^{i,t,s}(h,x;h',x') &= &\int_0^1\big(g'(\theta
X_{t,T}(x+he_i)+(1-\theta)X_{t,T}(x))\Delta_h^iX_{t,T}(x)\\ &-&g'(\theta X_{t,T}(x'+h'e_i)+(1-\theta)X_{t,T}(x'))\Delta_{h'}^iX_{t,T}(x')\big)d\theta \\
&+&\int_s^T\bigg[(\varphi_r(t,x+he_i,x)-\varphi_r(t,x'+h'e_i,x'))\Delta_{h'}^iX_{t,r}(x')\\
&+&(\psi_r(t,x+he_i,x)-\psi_r(t,x'+h'e_i,x'))\Delta_{h'}^iY_r^{t,x'}\\
&+&(\xi_r(t,x+he_i,x)-\xi_r(t,x'+h'e_i,x'))\Delta_{h'}^iZ_r^{t,x'}\\
&+&\langle\eta_r(t,x+he_i,x)-\eta_r(t,x'+h'e_i,x'),\Delta_{h'}^iV_r^{t,x'}\rangle\bigg]dr.
\eeqn
Since $f$ has bounded derivatives, we have \beqn
&&|\psi_r(t,x+he_i,x)-\psi_r(t,x'+h'e_i,x')|+|\xi_r(t,x+he_i,x)-\xi_r(t,x'+h'e_i,x')|\\
&&+|\eta_r(t,x+he_i,x)-\eta_r(t,x'+h'e_i,x')|
\leq 
C(|\Sigma_r^{t,x+he_i}-\Sigma_r^{t,x'+h'e_i}|+|\Sigma_r^{t,x}-\Sigma_r^{t,x'}|)\eeqn
and \beqn &&|\varphi_r(t,x+he_i,x)-\varphi_r(t,x'+h'e_i,x')|\\
 &\leq&
C\bigg(1+|\Sigma_r^{t,x+he_i}|+|\Sigma_r^{t,x'+h'e_i}|+|\Sigma_r^{t,x}|+|\Sigma_r^{t,x'}|\bigg)^q
\bigg(|\Sigma_r^{t,x+he_i}-\Sigma_r^{t,x'+h'e_i}|+|\Sigma_r^{t,x}-\Sigma_r^{t,x'}|\bigg),\eeqn
it follows from the previous proposition that there exist a constant
$C_p$ and some $\alpha_p$ such that \beqn E[\sup_{t\leq s\leq
T}|\Gamma^{i,t,s}(h,x;h',x')|^p]\leq
C_p(1+|x|+|x'|+|h|+|h'|)^{\alpha_p}(|x-x'|^p+|h-h'|^p).\eeqn
Then, statement (ii) follows from the same estimate, which ends the
proof.
\end{proof}
\medskip
Therefore, using Kolmogorov's criterion, we can get that  $Y$, $Z$,
$V$ are a.s. differentiable w.r.t. $x$. Iterating the same argument,
we get in fact:
\begin{theorem}
For all $0\leq t\leq T$, there exists a version of
$\Sigma^{t,x}=(Y^{t,x},Z^{t,x},V^{t,x})$ such that $x\mapsto
\Sigma^{t,x}$ is a.e. of class $C^2$ from $\bbR^d$ into
$D([t,T];\bbR^m)\times L^2([t,T];\bbR^{m\times d})\times
L^2([t,T]\times \E,ds\lambda(de);\bbR^m)$, where $D([t,T];\bbR^m)$
denotes the set of $\bbR^m$-valued c\`{a}dl\`{a}g functions on
$[t,T]$. Moreover, for any $p\geq 2$, there exist $C_p$ and $q$
such that
$$\begin{array}{ll}
\displaystyle\|\frac{\partial^{|\alpha|}}{\partial
x^{\alpha}}\Sigma^{t,x}\|^p_{\cB^p}\leq C_p(1+|x|^q),\quad 0\leq
|\alpha|\leq 2,\\
\displaystyle\|\frac{\partial^2}{\partial x_i\partial
x_j}\Sigma^{t,x}-\frac{\partial^2}{\partial x_i\partial
x_j}\Sigma^{t,x'}\|^p_{\cB^p}\leq C_p(1+|x|+|x'|)^q|x-x'|^p,\quad
0\leq t\leq T.
\end{array}
$$
\end{theorem}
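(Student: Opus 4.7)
The plan is to apply the Kolmogorov--\v{C}entsov continuity criterion to the $L^p$ estimates of the preceding proposition, viewed as estimates on $\cB^p$-valued random fields on $\bbR^d$, and then to iterate to reach second derivatives.

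\textbf{First derivative.} Taking $p$ large enough (say $p>d$) in estimate (i) gives
$$\|\Sigma^{t,x}-\Sigma^{t,x'}\|_{\cB^p}^p \leq C_p(1+|x|+|x'|)^q|x-x'|^p,$$
so Kolmogorov's criterion yields a continuous version of $x\mapsto \Sigma^{t,x}\in\cB^p$, with the growth bound $\|\Sigma^{t,x}\|_{\cB^p}^p\le C_p(1+|x|^q)$ following from the $L^p$ estimate at a single basepoint combined with (i). Applying the same criterion to the $\cB^p$-valued map $(x,h)\mapsto \Delta_h^i\Sigma^{t,x}$ on $\bbR^d\times(\bbR\setminus\{0\})$, estimate (ii) produces a jointly continuous version that extends continuously to $h=0$; this extension is, by construction, $\partial_{x_i}\Sigma^{t,x}\in\cB^p$, and its polynomial bound in $x$ is a direct consequence of (ii) with $x'=x$, $h'$ fixed, and the one-point estimate of $\Delta_{h'}^i\Sigma^{t,x}$ obtained from (i) divided by $|h'|$.

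\textbf{Second derivative and iteration.} Formal differentiation of BSDE \eqref{bdsde4} in $x_i$ shows that $\partial_{x_i}\Sigma^{t,x}$ solves a linear BSDE with jumps: terminal value $g'(X_{t,T}(x))\,\partial_{x_i}X_{t,T}(x)$, and driver linear in $(\partial_{x_i}Y,\partial_{x_i}Z,\partial_{x_i}V)$ with coefficients $\partial_y f$, $\partial_z f$, $\partial_v f$ evaluated along $(X_{t,r}(x),\Sigma_r^{t,x})$, plus an inhomogeneous term $\partial_x f\cdot\partial_{x_i}X_{t,r}(x)$. Under $\mathbf{(H)}$ these coefficients are bounded, and Proposition~\ref{estimatesde} provides the $L^p$ bounds and $x$-regularity of $X_{t,\cdot}(x)$ and of its derivative flow, while the first-derivative $\cB^p$-estimate of step one controls the $x$-regularity of the coefficients evaluated at $\Sigma_r^{t,x}$. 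Repeating verbatim the computation of the preceding proposition for this linear BSDE produces analogues of (i) and (ii) for $\partial_{x_i}\Sigma^{t,x}$. A second application of Kolmogorov's criterion then yields $\partial^2_{x_ix_j}\Sigma^{t,x}$ as an element of $\cB^p$ together with the growth bound $\|\partial^\alpha\Sigma^{t,x}\|_{\cB^p}^p\le C_p(1+|x|^q)$ for $|\alpha|\le 2$ and the Hölder continuity of the second derivatives stated in the theorem.

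\textbf{Main obstacle.} The nontrivial step is the iteration: one must check that the Hölder-type computation of the preceding proposition still goes through when applied to the differentiated linear BSDE, where the coefficients $\partial_y f(\cdots,\Sigma_r^{t,x})$, $\partial_x f(\cdots,\Sigma_r^{t,x})$, etc.\ depend on $\Sigma^{t,x}$ and inherit only polynomially-growing $L^p$-Hölder modulus in $x$ from step one. Propagating this polynomial growth through the Gronwall step is what produces the factor $(1+|x|+|x'|)^q$ at each order. The identification of the continuous extension at $h=0$ with the solution of the formally differentiated BSDE is standard and follows from the $\cB^p$-stability of BSDEs with jumps, so it is subsumed in the Kolmogorov step.
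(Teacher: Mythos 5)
Your proposal is correct and follows essentially the same route as the paper, which disposes of this theorem in a single sentence: apply Kolmogorov's criterion to the $\cB^p$-H\"older estimates (i) and (ii) of the preceding proposition to obtain continuity and a.s.\ differentiability in $x$, then iterate the same argument (differentiating the BSDE and repeating the estimates for the resulting linear BSDE with jumps) to reach second derivatives. Your write-up in fact supplies more detail on the iteration and the identification of the $h\to 0$ limit than the paper does.
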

Additionally, we have the following regularity in $t$:
\begin{proposition}
For all $p\geq 2$ and $s\geq t\vee t'$, there exist $C_p$ and $q$
such that
$$\begin{array}{ll}
\displaystyle\|\Sigma_s^{t,x}-\Sigma_s^{t',x'}\|^p_{\cB^p}\leq
C_p(1+|x|+|x'|)^q(|t-t'|^{p/2}+|x-x'|^p),\\
\displaystyle\|\frac{\partial^{|\alpha|}}{\partial
x^{\alpha}}\Sigma_s^{t,x}-\frac{\partial^{|\alpha|}}{\partial
x^{\alpha}}\Sigma_s^{t',x'}\|^p_{\cB^p}\leq
C_p(1+|x|+|x'|)^q(|t-t'|^{p/2}+|x-x'|^p),\quad 0\leq |\alpha|\leq 2.
\end{array}
$$
\end{proposition}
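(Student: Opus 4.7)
The plan is to reduce to a pure time increment by the triangle inequality
\[\|\Sigma_s^{t,x}-\Sigma_s^{t',x'}\|_{\cB^p} \;\leq\; \|\Sigma_s^{t,x}-\Sigma_s^{t',x}\|_{\cB^p} + \|\Sigma_s^{t',x}-\Sigma_s^{t',x'}\|_{\cB^p},\]
the second term being controlled directly by part~(i) of the previous proposition, which yields the $|x-x'|^p$ contribution with the polynomial weight $(1+|x|+|x'|)^q$. Hence one may assume $t\leq t'$, and everything is reduced to proving $\|\Sigma^{t,x}-\Sigma^{t',x}\|_{\cB^p}^p \leq C_p(1+|x|)^q(t'-t)^{p/2}$ for the case of a pure time shift (and analogously for the derivatives).

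For this, I would subtract the BSDEs \eqref{bdsde4} written at parameters $(t,x)$ and $(t',x)$, both considered on the common interval $[t',T]$, and apply to the difference the $L^p$-estimates proved earlier in this appendix. The Lipschitz property of $f$ in $(y,z,v)$ absorbs the $(Y,Z,V)$-increments into the left-hand side, and what remains on the right is the data mismatch
\[E|g(X_{t,T}(x))-g(X_{t',T}(x))|^p + E\int_{t'}^T \bigl|f(r,X_{t,r}(x),\Theta_r)-f(r,X_{t',r}(x),\Theta_r)\bigr|^p\,dr,\]
where $\Theta_r=(Y_r^{t,x},Z_r^{t,x},V_r^{t,x})$. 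By the $x$-regularity in hypothesis~\textbf{(H)} and the polynomial $L^p$-bounds already shown for $\Theta$, this is bounded by $C_p(1+|x|)^q \, E\sup_{t'\leq r\leq T}|X_{t,r}(x)-X_{t',r}(x)|^p$.

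The heart of the proof is the flow estimate
\[E\sup_{t'\leq r\leq T}|X_{t,r}(x)-X_{t',r}(x)|^p \;\leq\; C_p(1+|x|^p)(t'-t)^{p/2},\]
which I would derive from the cocycle identity $X_{t,r}(x)=X_{t',r}(X_{t,t'}(x))$ (property~(iii) of Proposition~\ref{estimatesde}) combined with the Lipschitz dependence of $X_{t',\cdot}$ on its initial datum (second estimate of Proposition~\ref{estimatesde}): this reduces matters to the short-time bound $E|X_{t,t'}(x)-x|^p \leq C_p(1+|x|^p)(t'-t)^{p/2}$, which is the standard BDG-sharpened form of the first estimate of Proposition~\ref{estimatesde} applied to the martingale components of \eqref{sde}.

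For the derivatives $\partial_x^{\alpha}\Sigma^{t,x}$, $1\leq|\alpha|\leq 2$, I proceed analogously: each such derivative satisfies a linear BSDE obtained by formally differentiating \eqref{bdsde4} in $x$, with coefficients involving $\partial^\alpha g$, $\partial^\alpha f$, and the derivative flows $\partial_x^\alpha X_{t,\cdot}(x)$. The same triangle-inequality plus $L^p$-stability strategy again reduces the problem to time-Hölder estimates on $\partial_x^\alpha X_{t,\cdot}$, themselves obtained by differentiating the cocycle identity $X_{t,r}=X_{t',r}\circ X_{t,t'}$ and invoking the moment bounds of Proposition~\ref{estimatesde}. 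The main technical obstacle I anticipate is the bookkeeping of polynomial weights and of chain-rule cross terms at order $|\alpha|=2$ (involving products such as $\nabla X_{t',r}(X_{t,t'}(x))\,\nabla X_{t,t'}(x) - \nabla X_{t',r}(x)$); however, each such term is handled uniformly in $L^p$ by combining Proposition~\ref{estimatesde} with the spatial regularity estimates already established in the preceding proposition, so no essentially new input is required.
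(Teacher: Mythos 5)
The paper states this proposition without proof (it is quoted as a regularity result in the spirit of Buckdahn--Pardoux), so there is no argument of the authors' to compare against line by line. Your overall architecture --- triangle inequality to isolate a pure time shift, the spatial increment being covered by part (i) of the preceding proposition, $L^p$-stability of the BSDE absorbing the $(Y,Z,V)$-increments, and the cocycle identity $X_{t,r}=X_{t',r}\circ X_{t,t'}$ reducing everything to a short-time estimate on the flow --- is the natural one, and is exactly how the analogous statement is proved for continuous diffusions.

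The gap is in what you call the heart of the proof. The short-time estimate $E\,|X_{t,t'}(x)-x|^p\leq C_p(1+|x|^p)(t'-t)^{p/2}$ is \emph{not} a BDG-sharpened form of the first estimate of Proposition \ref{estimatesde} in the jump setting. For the compensated Poisson integral the relevant moment inequality (Kunita, or Bichteler--Jacod) reads
\begin{equation*}
E\sup_{t\leq s\leq t'}\Big|\int_t^{s}\int_{\E}\beta\,\dtmur\Big|^p\;\leq\;C_p\,\Big(E\Big(\int_t^{t'}\int_{\E}|\beta|^2\lambda(de)dr\Big)^{p/2}+E\int_t^{t'}\int_{\E}|\beta|^p\lambda(de)dr\Big),
\end{equation*}
and under \textbf{(A4)} the second term is of order $(t'-t)$ to the \emph{first} power, which for $p>2$ dominates $(t'-t)^{p/2}$ as $t'\to t$. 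This is precisely why Proposition \ref{estimatesde}, quoted from Fujiwara--Kunita, asserts only $E\sup_{t\le r\le s}|X_{t,r}(x)-x|^p\leq M_p(s-t)(1+|x|^p)$. The first power is sharp: for a finite-activity jump term, one jump occurs in $(t,t']$ with probability of order $t'-t$ and produces an $O(1)$ displacement, so $E|X_{t,t'}(x)-x|^p\asymp t'-t\gg (t'-t)^{p/2}$. Consequently your argument, run with the estimate that is actually available, yields $|t-t'|$ in place of $|t-t'|^{p/2}$, which for $p>2$ is strictly weaker than the claimed bound; moreover the same example (take $f=0$ and $g$ with nonvanishing gradient, so that $\|\Sigma^{t,x}-\Sigma^{t',x}\|_{\cB^p}^p\geq E|g(X_{t,T}(x))-g(X_{t',T}(x))|^p\asymp t'-t$) indicates that the exponent $p/2$ cannot hold at all for $p>2$ in the presence of jumps, so the time increment should carry the exponent $1\wedge(p/2)$ rather than $p/2$. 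You need either to replace the false flow estimate and propagate the resulting first-power term through the statement, or to restrict to $p=2$. The rest of your plan (reduction of the data mismatch via \textbf{(H)}, differentiation of the cocycle identity for $1\leq|\alpha|\leq 2$) is sound modulo this same issue, which recurs verbatim for the derivative flows.
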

Hence, we have
\begin{corollary}
The function $(t,x)\mapsto Y_t^{t,x}\in C_p^{1,2}([0,T]\times\bbR^d;
\bbR^m)$.
\end{corollary}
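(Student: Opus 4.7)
The statement has two parts: $C^2$-regularity in $x$ and $C^1$-regularity in $t$, both with polynomial growth. Observe first that $u(t,x) := Y_t^{t,x}$ is deterministic, since at time $t$ only the increments of $W$ and $\mu$ on $[t,T]$ enter and these are independent of whatever information accrues before $t$; hence $Y_t^{t,x}$ is measurable with respect to a trivial $\sigma$-algebra. The preceding Theorem, specialized at $s = t$, then directly yields that $x \mapsto u(t,x)$ is of class $C^2$ with $\| D^{\alpha} u(t,\cdot)\|$ bounded by $C_p(1+|x|^q)$ uniformly in $t$ for $|\alpha| \leq 2$, and the preceding Proposition on joint $(t,x)$ regularity ensures these derivatives depend continuously on $t$.

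For the $t$-differentiability the plan is to exploit the Markovian structure of the BSDE. By uniqueness of (\ref{bdsde4}) combined with the strong Markov property of $X_{t,\cdot}(x)$, one has the flow property $Y_r^{t,x} = u(r, X_{t,r}(x))$ a.s. for every $r \in [t,T]$. Writing (\ref{bdsde4}) over $[t, t+h]$ and taking expectation (using that $Y_t^{t,x}$ is deterministic so the martingale parts vanish) yields
\[
 u(t,x) = \mathbb{E}\bigl[u(t+h, X_{t,t+h}(x))\bigr] + \mathbb{E}\!\int_t^{t+h}\! f\bigl(s,X_{t,s}(x),Y_s^{t,x},Z_s^{t,x},V_s^{t,x}\bigr)ds.
\]
Since $u(t+h,\cdot) \in C^2$ by the first part, It\^o's formula applied to the \emph{fixed} $C^2$ function $y \mapsto u(t+h,y)$ along the jump diffusion $(X_{t,s}(x))_{s \in [t,t+h]}$ gives, after expectation,
\[
 \mathbb{E}\bigl[u(t+h, X_{t,t+h}(x))\bigr] = u(t+h,x) + \mathbb{E}\!\int_t^{t+h}\! \bigl(\mathcal{L}u(t+h,\cdot)\bigr)(X_{t,s}(x))\,ds.
\]
Subtracting, dividing by $h$, and passing to the limit $h \to 0^+$ (with the symmetric argument on $[t-h,t]$ for the left derivative) produces the candidate identity
\[
 \partial_t u(t,x) = -\mathcal{L}u(t,x) - f\bigl(t,x,u(t,x),(\sigma^{*}\nabla u)(t,x),u(t,x+\beta(x,\cdot))-u(t,x)\bigr),
\]
whose right-hand side is continuous in $(t,x)$ with polynomial growth, delivering $u \in C^{1,2}_p$.

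The principal obstacle is the limit of $h^{-1}\mathbb{E}\int_t^{t+h} f(s,X_{t,s}(x),Y_s^{t,x},Z_s^{t,x},V_s^{t,x})\,ds$ as $h \to 0^+$: the density processes $Z^{t,x}$ and $V^{t,x}$ are defined only modulo $ds \otimes d\mathbb{P}$-null sets and admit no pointwise value at $s = t$. The resolution is to identify them by uniqueness of semimartingale decomposition: compare the BSDE decomposition of $Y^{t,x}$ with that obtained by applying It\^o's formula to $u(s,X_{t,s}(x))$ after a time-mollification of $u$ (legitimate once we have $u \in C^{0,2}_p$ together with the $L^p$-H\"older continuity in $t$ from the preceding Proposition), and then remove the regularization using the equivalence of norms. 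This yields, $ds\otimes d\mathbb{P}$-a.e., $Z_s^{t,x} = (\sigma^{*}\nabla u)(s,X_{t,s}(x))$ and $V_s^{t,x}(\cdot) = u(s, X_{t,s-}(x)+\beta(X_{t,s-}(x),\cdot)) - u(s, X_{t,s-}(x))$, and the right-continuity of these quantities at $s = t$, together with the Lipschitz continuity of $f$, permits passage to the limit.
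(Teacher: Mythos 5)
Your argument is correct, and it supplies precisely what the paper omits: after the space-regularity theorem and the $t$-regularity proposition the authors write only ``Hence, we have'' and state the corollary, so there is no written proof to compare against. Your reading of the logic is the right one: the preceding results give $C^2$-regularity in $x$ with polynomially growing derivatives and joint continuity of $D^\alpha u$ for $|\alpha|\leq 2$, but the time estimate is only of order $|t-t'|^{p/2}$ in $\mathcal{B}^p$, hence H\"older-$\frac12$ at best, so differentiability in $t$ cannot come from Kolmogorov's criterion and must be extracted from the equation itself. The dynamic-programming identity, It\^o's formula applied to the frozen-time function $u(t+h,\cdot)$ along the jump diffusion, and the limit $h\to 0$ is the classical Pardoux--Peng route (and the one in the Buckdahn--Pardoux preprint that the authors cite for this section), and your treatment of the delicate $f$-term is sound. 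Two minor remarks: (i) the identification $Z^{t,x}_s=(\sigma^{*}\nabla u)(s,X_{t,s}(x))$ and $V^{t,x}_s(\cdot)=u(s,X_{t,s-}(x)+\beta(X_{t,s-}(x),\cdot))-u(s,X_{t,s-}(x))$ can be obtained more directly here from the $x$-differentiability already established, via the standard relation $Z^{t,x}_s=\nabla_x Y^{t,x}_s(\nabla X_{t,s}(x))^{-1}\sigma(X_{t,s}(x))$ and its jump analogue --- this is exactly how the paper reads off $v_n(t,x)=Z^{n,t,x}_t=\sigma^{*}\nabla u_n(t,x)$ in Appendix C --- so the time-mollification detour, while legitimate, is not needed; (ii) passing to the limit in $h^{-1}E\int_t^{t+h}f\,ds$ tacitly uses continuity of $f$ in its time variable, which hypothesis (H) does not state explicitly but clearly intends, since without it the conclusion $u\in C^{1,2}_p$ would be false. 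Neither point is a genuine gap in your reasoning.
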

\subsection{Proof of Theorem \ref{main}: existence and uniqueness of solution of PIDEs \eqref{pde1}}
\label{appendix:mainPIDE}
(i) \textit{Uniqueness }: Let $u^1,u^2\in {\mathcal H}_T$ be two
weak solutions of (\ref{pde1}), then Proposition \ref{weak:Itoformula}
implies that for $i=1,2$
$$\begin{array}{ll}
\displaystyle\int_{\R^d}\int_s^Tu^i(r,x)d\phi_t(r,x)dx+(u^i(s,x),\phi_t(s,x))-(g(x),\phi_t(T,x))
-\int_s^T(u^i(r,\cdot),{\mathcal
L}^\ast \phi_t(r,\cdot))dr\\=\displaystyle\int_{\R^d}\int_s^Tf(r,x,u^i(r,x),\sigma^\ast\nabla
u^i(r,x),u^i(r,x+\beta(x,\cdot))-u^i(r,x))\phi_t(r,x)drdx.
\end{array}$$
By the decomposition of the semimartingale $\phi_t(s,x)$, we have
\begin{equation}
\label{Ito:faible}
\begin{split}
&\int_{\R^d}\int_s^Tu^i(r,x)d\phi_t(r,x)dx \\=&
\int_{\R^d}\displaystyle\int_s^T u^i(r,x) {\mathcal
L}^\ast\phi_t(r,x)dr
-\sum_{j=1}^{d}\int_{\R^d}\int_s^T\left(\sum_{i=1}^{d}\frac{\partial}{\partial
x_i}(\sigma^{ij}(x) u^i(r,x) \phi_t(r,x))\right)dW_r^j \\ +&
\int_{\R^d}\int_s^T\int_{\E}  u^i(r,x) {\mathcal
A}^\ast_e\phi_t(r-,x)\dtmur.
\end{split}
\end{equation}

We substitute this in the above equation and get
$$\begin{array}{ll}
\displaystyle\int_{\R^d}u^i(s,x)\phi_t(s,x)dx&=\displaystyle\int_{\R^d}g(x)\phi_t(T,x)dx-\displaystyle\int_s^T\int_{\R^d}(\sigma^\ast\nabla
u^i)(r,x)\phi_t(r,x)dxdW_r\\&-\displaystyle\int_s^T\int_{\E}\int_{\R^d}
u^i(r,x) {\mathcal A}^\ast_e\phi_t(r-,x)dx\dtmur\\
&+\displaystyle\int_s^T\int_{\R^d}f(r,x,u^i(r,x),\sigma^\ast\nabla
u^i(r,x),u^i(r,x+\beta(x,\cdot))-u^i(r,x))\phi_t(r,x)drdx.
\end{array}$$
Then by the change of variable, we obtain
$$\begin{array}{ll}
&\displaystyle\int_{\R^d}u^i(s,X_{t,s}(x))\phi(x)dx\\&=\displaystyle\int_{\R^d}g(X_{t,T}(x))\phi(x)dx
-\displaystyle\int_{\R^d}\int_s^T\phi(x)(\sigma^\ast\nabla
u^i)(r,X_{t,r}(x))dxdW_r\\
&\displaystyle-\int_{\R^d}\int_s^T\int_{\E}\phi(x)(u^i(s,X_{t,r-}(x)+\beta(X_{t,r-}(x),e))-u(s,X_{t,r-}(x)))\dtmur
dx\\
&\displaystyle+\int_{\R^d}\int_s^T\phi(x)f(r,X_{t,r}(x),u^i(r,X_{t,r}(x)),\sigma^\ast\nabla
u^i(r,X_{t,r}(x)),{\mathcal A}_eu^i(r,X_{t,r-}(x)))drdx.
\end{array}$$
Since $\phi$ is arbitrary, we deduce that
$(Y_s^{i,t,x},Z_s^{i,t,x},V_s^{i,t,x})$ solve the BSDE associated
with $(g(X_{t,T}(x)),f)$, $\rho(x)dx$-a.e., where
$$\begin{array}{cc}
Y_s^{i,t,x}=u^i(s,X_{t,s}(x)),Z_s^{i,t,x}=(\sigma^\ast\nabla
  u^i)(s,X_{t,s}(x))\mbox{ and
  }\\V_s^{i,t,x}=u^i(s,X_{t,s-}(x)+\beta(X_{t,s-}(x),\cdot))-u^i(s,X_{t,s-}(x)).\end{array}$$
This means $\rho(x)dx$-a.e., we have
$$Y_s^{i,t,x}=g(X_{t,T}(x))+\integ{s}{T}f(r,X_{t,r}(x),Y_r^{i,t,x},Z_r^{i,t,x},
V_r^{i,t,x})dr-\integ{s}{T}Z_r^{i,t,x}dW_r
       -\integ{s}{T}V_r^{i,t,x}(e)\dtmur.$$
 Then the uniqueness follows from the uniqueness of the BSDE.
\\[0.2cm]
\textit{(ii) Existence} : Let us set $$F(s,x)\triangleq
f(s,x,Y_s^{s,x},Z_s^{s,x},V_s^{s,x}).$$ Plugging into the facts that
$f^0\in \mathbf{L}^2_\rho([0,T]\times\R^d)$ and $f$ is Lipschitz
with respect to $(y,z,v)$, then thanks to Proposition \ref{prop:estimate}
we have $F(s,x)\in \mathbf{L}^2_\rho([0,T]\times\R^d)$. Since $g\in
\mathbf{L}^2_\rho(\R^d)$, we can approximate them by a sequence of
smooth functions with compact support $(g_n,F_n)$ such that
\begin{equation}\label{appr}
 \left\{
  \begin{array}{ll}
     g_n  \rightarrow g \mbox{ in } \mathbf{L}^2_\rho(\R^d),\\
  F_n(s,x)\rightarrow  F(s,x) \mbox{ in } \mathbf{L}^2_\rho([0,T]\times\R^d).
  \end{array}
 \right.
\end{equation}
Denote $(Y_s^{n,t,x},Z_s^{n,t,x},V_s^{n,t,x})\in {\mathcal
S}^2\times {\mathcal H}^2\times {\mathcal L}^2$ the solution of the
BSDE associated with $(\xi_n,F_n)$, where $\xi_n=g_n(X_{t,T}(x))$,
i.e.
\begin{equation}\label{bsde2}
 \begin{array}{ll}
Y_s^{n,t,x}=g_n(X_{t,T}(x))+\integ{s}{T}F_n(r,X_{t,r}(x))dr-\integ{s}{T}Z_r^{n,t,x}dW_r
       -\integ{s}{T} \int_{\E} V_r^{n,t,x}(e)\dtmur.
 \end{array}
\end{equation}
Since $F_n$ and $g_n$ are smooth enough, from the regularity result
of the solution with respect to $(t,x)$ (see the  proof in Appendix), we know that $u_n(t,x):=Y_t^{n,t,x}\in C^{1,2}([0,T]\times
\R^d)$ is the classic solution for the following PIDE:
\begin{equation}\label{pde2} \left\{
\begin{array}{ll}
(\partial_t+{\mathcal L})u(t,x)+F_n(t,x)=0\\u(T,x)=g_n(x).
\end{array}\right.
 \end{equation}
Moreover, we know that $v_n(t,x):=Z^{n,t,x}_t=\sigma^\ast \nabla
u_n(t,x)$. Besides, from the flow property, we can deduce that
$Y_s^{n,t,x}=u_n(s,X_{t,s}(x))$ and $Z_s^{n,t,x}=\sigma^\ast \nabla
u_n(s,X_{t,s}(x))$, as well as the representation of the jump part
$$V_s^{n,t,x}:=w_n(s,X_{t,s}(x)):=u_n(s,X_{t,s-}(x)+\beta(X_{t,s-}(x),\cdot))-u_n(s,X_{t,s-}(x)),\,a.s..$$
Applying the equivalence of norms result and the Proposition
\ref{prop:estimate}, we have
$$\begin{array}{ll}
 &\displaystyle \int_{\R^d}\int_t^T(|u_n(s,x)|^2+|\sigma^\ast \nabla
 u_n(s,x)|^2)ds \rho(x)dx\\
 \leq &\displaystyle c \int_{\R^d}\int_t^T E(|u_n(s,X_{t,s}(x))|^2+|\sigma^\ast \nabla
u_n(s,X_{t,s}(x))|^2)ds \rho(x)dx\\
= &\displaystyle c \int_{\R^d}\int_t^T
E(|Y_s^{n,t,x}|^2+|Z_s^{n,t,x}|^2)ds \rho(x)dx\\
\leq &\displaystyle
c \, \Big(\int_{\R^d}E|g_n(X_{t,T}(x))|^2\rho(x)dx+\int_{\R^d}\int_t^T
E|F_n(s,X_{t,s}(x))|^2ds \rho(x)dx \Big)\\
\leq &\displaystyle
c \, \Big(\int_{\R^d}|g_n(x)|^2\rho(x)dx+\int_{\R^d}\int_t^T
|F_n(s,x)|^2ds \rho(x)dx \, \Big)<+\infty,
\end{array}$$
where $c$ is a constant which changes from line to line.
Therefore, we have shown that $\forall n$, $u_n\in {\mathcal H}_T$
solves the PIDE associated with $(g_n,f_n)$, and for every $\phi\in
C_c^{1,\infty}([0,T]\times \R^d)$,
\begin{equation}
\begin{array}{ll}\label{wspde3}
\displaystyle\int_t^T(u_n(s,\cdot),\partial_s\phi(s,\cdot))ds+(u_n(t,\cdot),\phi(t,\cdot))-(g_n(\cdot),\phi(T,\cdot))
-\int_t^T({\mathcal L} u_n(s,\cdot),\phi(s,\cdot))ds
\\=\displaystyle\int_t^T(F_n(s,\cdot),\phi(s,\cdot))ds.
\end{array}
\end{equation}
Now for $m,n\in N$, applying It\^{o}'s formula to
$|Y_s^{m,t,x}-Y_s^{n,t,x}|^2$, we get for any $s\in [t,T]$,
$$\begin{array}{ll}
 &\displaystyle E|Y_s^{m,t,x}-Y_s^{n,t,x}|^2+E\int_s^T
 |Z_r^{m,t,x}-Z_r^{n,t,x}|^2dr+E\int_s^T\int_{\E}
 |V_r^{m,t,x}(e)-V_r^{n,t,x}(e)|^2\lambda(de)dr\\
 =& E|g_m(X_{t,T}(x))-g_n(X_{t,T}(x))|^2\\&+2\displaystyle E\int_s^T (Y_r^{m,t,x}-Y_r^{n,t,x})
 (F_m(r,X_{t,r}(x))-F_n(r,X_{t,r}(x)))dr.
\end{array}$$
Combining the properties of $f_m$ and $f_n$ with the basic
inequality $2ab\leq \varepsilon a^2+\varepsilon^{-1}b^2$, we have:
$$\begin{array}{ll}
 &\displaystyle E|Y_s^{m,t,x}-Y_s^{n,t,x}|^2+E\int_s^T
 |Z_r^{m,t,x}-Z_r^{n,t,x}|^2dr\\
 \leq &\displaystyle
E \, |g_m(X_{t,T}(x))-g_n(X_{t,T}(x))|^2
+\varepsilon E \, \int_s^T|Y_r^{m,t,x}-Y_r^{n,t,x}|^2dr\\
&\displaystyle+\frac{1}{\varepsilon}E\int_s^T|F_m(r,X_{t,r}(x))-F_n(r,X_{t,r}(x))|^2dr.
\end{array}$$
It follows again from the equivalence of norms that
$$\begin{array}{ll}
 &\displaystyle \int_{\R^d}E|Y_s^{m,t,x}-Y_s^{n,t,x}|^2\rho(x)dx\\
 \leq &\displaystyle
\int_{\R^d}E|g_m(X_{t,T}(x))-g_n(X_{t,T}(x))|^2\rho(x)dx
+\varepsilon\int_{\R^d}E\int_s^T|Y_r^{m,t,x}-Y_r^{n,t,x}|^2dr\rho(x)dx\\
&\displaystyle+\frac{1}{\varepsilon}\int_{\R^d}E\int_s^T|F_m(r,X_{t,r}(x))-F_n(r,X_{t,r}(x))|^2dr\rho(x)dx\\
\leq &\displaystyle \varepsilon
C\int_{\R^d}E\int_s^T|Y_r^{m,t,x}-Y_r^{n,t,x}|^2dr\rho(x)dx+C
\int_{\R^d}|g_m(x)-g_n(x)|^2\rho(x)dx\\
&\displaystyle
+\frac{C}{\varepsilon}\int_{\R^d}\int_s^T|F_m(r,x)-F_n(r,x)|^2dr\rho(x)dx.
\end{array}$$
Choosing $\varepsilon$ appropriately, by Gronwall's inequality and
the convergence of $F_n$ and $g_n$, we get as $m,n\rightarrow
\infty$,
$$\displaystyle \sup\limits_{t\leq s\leq T}\int_{\R^d}E|Y_s^{m,t,x}-Y_s^{n,t,x}|^2\rho(x)dx\rightarrow 0,$$
which implies immediately as $m,n\rightarrow \infty$,
$$\displaystyle \int_{\R^d}E\int_s^T|Y_r^{m,t,x}-Y_r^{n,t,x}|^2dr\rho(x)dx+
\int_{\R^d}E\int_s^T|Z_r^{m,t,x}-Z_r^{n,t,x}|^2dr\rho(x)dx\rightarrow
0.$$ Thanks again to the equivalence of norms (\ref{equi2}), we
get as $m,n\rightarrow \infty$:
$$\begin{array}{ll}
&\displaystyle\int_t^T\int_{\R^d}(|u_m(s,x)-u_n(s,x)|^2+|\sigma^\ast\nabla
u_m(s,x)-\sigma^\ast\nabla u_n(s,x)|^2)\rho(x)dxds\\
\leq &\displaystyle
\frac{1}{C}\int_t^T\int_{\R^d}E(|(u_m-u_n)(s,X_{t,s}(x))|^2+|(\sigma^\ast\nabla
u_m-\sigma^\ast\nabla u_n)(s,X_{t,s}(x))|^2)\rho(x)dxds\\
=&\displaystyle
\frac{1}{C}\int_t^T\int_{\R^d}E(|Y_s^{m,t,x}-Y_s^{n,t,x}|^2+|Z_s^{m,t,x}-Z_s^{n,t,x}|^2)\rho(x)dxds\rightarrow
0,
\end{array}$$
which means that $u_n$ is Cauchy sequence in $\mathcal H_T$. Denote
its limit as $u$, then $u\in \mathcal H_T$. Moreover, since
$X_{t,s}(x)$ has at most countable jumps, using again the
equivalence of norms, we can deduce that
$$\begin{array}{ll}
\displaystyle\int_t^T\int_{\R^d}|w_m(s,x)-w_n(s,x)|^2\rho(x)dxds
\rightarrow 0,
\end{array}$$
henceforth there exists at least a subsequence $\{u_{n_k}\}$, such
that $dt\otimes dP\otimes \rho(x)dx$-a.e.,
\beqn\left\{\begin{array}{lll}
Y^{n_k,t,x}_s&=&u_{n_k}(s,X_{t,s}(x))\rightarrow u(s,X_{t,s}(x))=:Y_s^{t,x},\\
Z^{n_k,t,x}_s&=&\sigma^\ast\nabla u_{n_k}(s,X_{t,s}(x))\rightarrow
\sigma^\ast\nabla u(s,X_{t,s}(x))=: Z_s^{t,x},\\
V^{n_k,t,x}_s&=&w_{n_k}(s,X_{t,s}(x))\rightarrow
u(s,X_{t,s-}(x)+\beta(X_{t,s-}(x),\cdot))-u(s,X_{t,s-}(x)):=
V_s^{t,x}.
\end{array}\right.\eeqn Then we get the desired probabilistic
representations (\ref{representation}). Passing limit in
(\ref{wspde3}), we have for every $\phi$,
\begin{equation}
\begin{array}{ll}\label{wspde4}
\displaystyle\int_t^T(u(s,\cdot),\partial_s\phi(s,\cdot))ds+(u(t,\cdot),\phi(t,\cdot))-(g(\cdot),\phi(T,\cdot))
-\int_t^T( u(s,\cdot),{\mathcal L}^\ast \phi(s,\cdot))ds
\\=\displaystyle\int_t^T(f(s,\cdot,u(s,\cdot),\sigma^\ast\nabla
u(s,\cdot),u(s,x+\beta(x,\cdot))-u(s,x)),\phi(s,\cdot))ds,
\end{array}
\end{equation}
which means that $u\in \mathcal H_T$ is the weak solution of
(\ref{pde1}). $\hfill \Box$

\subsection{Proof of the tightness of the sequence $(\pi_n)_{ n\in \mathbb N}$}
\label{appendix:tight}
Recall first that  $\nu _{n}(dt,dx)=n(u_{n}-h)^{-}(t,x)dtdx$ and $\pi _{n}(dt,dx)=\rho
(x)\nu _{n}(dt,dx)$ where $ u_n $ is the solution of the PIDEs \eqref{o-equa1}.  

\begin{lemma}
\label{tight}
 The sequence of measure $(\pi _{n})_{n  \in \mathbb N}$  is tight.
\end{lemma}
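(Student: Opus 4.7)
The plan is to invoke Prokhorov's theorem. Since $[0,T]$ is already compact, tightness of $(\pi_n)_{n\in\mathbb{N}}$ on $[0,T]\times\mathbb{R}^d$ reduces to the uniform tail estimate
\[
\sup_n \pi_n\big([0,T]\times\{|x|>R\}\big)\longrightarrow 0 \quad \text{as } R\to\infty,
\]
the total mass already being controlled by \eqref{est-measure} in the proof.

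First I would apply the equivalence of norms \eqref{equi2} to the truncated function $\Psi_R(s,x):= n(u_n-h)^-(s,x)\mathbf{1}_{\{|x|>R\}}$, together with the probabilistic representation $dK_t^{n,0,x}=n(u_n-h)^-(t,X_{0,t}(x))\,dt$, to obtain
\[
c\,\pi_n\big([0,T]\times\{|x|>R\}\big) \;\leq\; \int_{\mathbb{R}^d}\rho(x)\,E\!\left[\int_0^T \mathbf{1}_{\{|X_{0,t}(x)|>R\}}\,dK_t^{n,0,x}\right]dx.
\]

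Next I would split the $x$-integration into $\{|x|>R/2\}$ and $\{|x|\leq R/2\}$. On $\{|x|>R/2\}$ I drop the indicator and invoke \eqref{K-estimate} through $E K_T^{n,0,x}\leq C(1+|x|^{\kappa})$; this piece is majorized by $C\int_{|x|>R/2}\rho(x)(1+|x|^{\kappa})\,dx$, which vanishes as $R\to\infty$ since $\rho(x)(1+|x|^{\kappa})$ is globally integrable thanks to $p\geq \kappa+d+1$. On $\{|x|\leq R/2\}$, the event $\{|X_{0,t}(x)|>R\}$ forces $|X_{0,t}(x)-x|>R/2$; combining Cauchy--Schwarz with \eqref{K-estimate} and the flow displacement moment estimate $E\sup_{t\leq T}|X_{0,t}(x)-x|^{q}\leq M_q T(1+|x|^{q})$ from Proposition \ref{estimatesde} (with some $q\geq 2$) yields
\[
E\!\left[\int_0^T \mathbf{1}_{\{|X_{0,t}(x)-x|>R/2\}}\,dK_t^{n,0,x}\right]\;\leq\;\frac{C(1+|x|^{\kappa+q/2})}{R^{q/2}},
\]
so the second piece is at most $C R^{-q/2}\int_{|x|\leq R/2}\rho(x)(1+|x|^{\kappa+q/2})\,dx$, which is of order $R^{-q/2}+R^{\kappa+d-p}$ and tends to $0$ as well.

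Summing the two contributions delivers the uniform vanishing of the tails, hence tightness of $(\pi_n)_{n}$. The main bookkeeping issue is matching the polynomial growth $(1+|x|^{\kappa})$ from the $K$-estimate and the factor $(1+|x|^{q/2})$ coming from the flow displacement against the decay of $\rho$; the calibration $p\geq \kappa+d+1$ is exactly what is needed to close this accounting for some $q\geq 2$.
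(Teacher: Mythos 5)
Your proof is correct, but it takes a genuinely different route from the paper's. The paper controls the tail $\pi_n([0,T]\times\{|x|\geq 2K\})$ by splitting according to the displacement of the \emph{inverse} flow $X^{-1}_{0,s}(x)$, which forces it to (a) invoke exponential (large-deviation type) tail estimates for the sup of the stochastic integrals driving the flow, (b) perform a change of variables $y=X^{-1}_{0,s}(x)$ that brings in the Jacobian, and (c) establish the auxiliary integrability $\int\rho(x)^{-1}E\bigl[(\sup_r\rho(X_{0,r}(x))J(X_{0,r}(x)))^2\bigr]dx<\infty$. You instead apply the lower bound in the norm-equivalence \eqref{equi2} directly to the truncated function $n(u_n-h)^-\mathbf{1}_{\{|x|>R\}}$ (exactly the same device the paper uses to get \eqref{est-measure}), which transfers the tail of $\pi_n$ onto the event $\{|X_{0,t}(x)|>R\}$ under the \emph{forward} flow; the far region $\{|x|>R/2\}$ is then killed by the first-moment bound $E K_T^{n,0,x}\leq C(1+|x|^\kappa)$ and the integrability of $\rho(x)(1+|x|^\kappa)$, while the near region only needs Chebyshev on $\sup_t|X_{0,t}(x)-x|$ via the polynomial moment estimates of Proposition \ref{estimatesde}, plus Cauchy--Schwarz against \eqref{K-estimate}. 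Your calibration is sound: the two pieces are $O(R^{\kappa+d-p})$ and $O(R^{-q/2}+R^{\kappa+d-p})$ respectively, both vanishing since $p\geq\kappa+d+1$. What your approach buys is economy: no large-deviation estimates, no inverse flow, no Jacobian control --- polynomial tail decay suffices because only tightness, not a rate, is needed. What the paper's approach buys is sharper (exponential in $K$) tail decay and a template that survives if one only has the total-mass bound \eqref{est-measure} rather than the pointwise-in-$x$ second-moment bound \eqref{K-estimate}; but since \eqref{K-estimate} is available here, your shorter argument closes the lemma.
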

\begin{proof}
Since here we need to deal with the additional jump part, we adapt the proof of Theorem 4 in \cite{BCEF}.  
We shall prove that for every $\epsilon>0$ , there exists some constant $K$ such that
\beq
\int_0^T\int_{\mathbb R^d} \textbf{1}_{\left\lbrace \lvert x\rvert\geq 2K\right\rbrace} \pi_n(ds,dx)\leq \epsilon,\ \forall n\in N.
\eeq
We first write
\begin{align*}
&\int_0^T\int_{\mathbb R^d} \textbf{1}_{\left\lbrace \lvert x\rvert\geq 2K\right\rbrace} \pi_n(ds,dx)\\&= \int_0^T\int_{\mathbb R^d} \textbf{1}_{\left\lbrace \lvert x\rvert\geq 2K\right\rbrace} \left( \textbf{1}_{\left\lbrace \left|X^{-1}_{0,s}(x)-x\right|\leq K\right\rbrace} + \textbf{1}_{\left\lbrace \left|X^{-1}_{0,s}(x)-x\right|\geq K\right\rbrace} \right) \pi_n(ds,dx)\\
&:=I^n_K+L^n_K, \quad P-a.s.
\end{align*}
Taking
expectation yields \beqn \int_0^T\int_{\mathbb R^d}
\textbf{1}_{\left\lbrace \lvert x\rvert\geq 2K\right\rbrace}
\pi_n(ds,dx)=E I^n_K+E L^n_K. \eeqn
By (\ref{est-measure}) and  for $K \geq 2\lVert b \rVert_{\infty}T $, we get 
\beqn
E L^n_K &\leq& \int_0^T\int_{\mathbb R^d} \P\left(\underset{0\leq r\leq T}{{\rm sup}}\left|X^{-1}_{0,r}(x)-x\right|\geq K\right) \pi_n(ds,dx)\\
&\leq& \left(  C_1\ {\rm exp}(-C_2K^2)+C_3\ {\rm exp}(-C_4K) \right) \pi_n\left([0,T]\times \mathbb R^d \right)\\
&\leq& C'_1\ {\rm exp}(-C_2K^2)+C'_3\ {\rm exp}(-C_4K), 
\eeqn  so $E L^n_K \leq\epsilon$ for $K$ sufficiently large.
On the other hand, if $\left|x\right|\geq 2K $ and $\left|X^{-1}_{0,s}(x)-x\right|\leq K $ then $\left|X^{-1}_{0,s}(x)\right|\geq K $. Therefore
\beqn
E I^n_K&\leq&E \int_0^T\int_{\mathbb R^d}\textbf{1}_{\left\{\left|X^{-1}_{0,s}(x)\right|\geq K \right\}}\rho(x)\nu_n(ds,dx)\\
&=&E \int_0^T\int_{\mathbb
R^d}\textbf{1}_{\left\{\left|X^{-1}_{0,s}(x)\right|\geq K
\right\}}\rho(x)n(u_{n}-h)^{-}(s,x)dsdx \eeqn which, by the change
of variable $y= X^{-1}_{0,s}(x)$, becomes

\begin{align*}
& E \int_0^T\int_{\mathbb R^d}\textbf{1}_{\left\{\left| y\right|\geq K \right\}}\rho(X_{0,s}(y))J(X_{0,s}(y))n(u_{n}-h)^{-}(s,X_{0,s}(y))dsdy \\ 
&\leq E \int_{\mathbb R^d}\rho(x)\left(\rho(x)^{-1}\textbf{1}_{\left\{\left|x\right|\geq K\right\}} \underset{0\leq r\leq T}{{\rm sup}}\rho(X_{0,r}(x))J(X_{0,r}(x))\right)K^{n,0,x}_{T}dx \\
&\leq \left(E \int_{\mathbb R^d} \left(K^{n,0,x}_{T}\right)^2\rho(x)dx\right)^{1/2}\\ &\hspace{0.5cm}\left(E \int_{\mathbb R^d} \left(\rho(x)^{-1} \textbf{1}_{\left\{\left|x\right|\geq K\right\}}\underset{0\leq r\leq T}{{\rm sup}}\rho(X_{0,r}(x))J(X_{0,r}(x))\right)^2 \rho(x)dx\right)^{1/2} \\
&\leq C \left(E \int_{\mathbb R^d} \left(\rho(x)^{-1}
\textbf{1}_{\left\{\left|x\right|\geq K\right\}}\underset{0\leq
r\leq T}{{\rm sup}}\rho(X_{0,r}(x))J(X_{0,r}(x))\right)^2
\rho(x)dx\right)^{1/2}.
\end{align*}
where the last inequality is a consequence of (\ref{K-estimate}). It is now sufficient to prove that \beq \int_{\mathbb
R^d}\rho(x)^{-1}E\left[\left(\underset{0\leq r\leq T}{{\rm
sup}}\rho(X_{0,r}(x))J(X_{0,r}(x))\right)^2\right]dx<\infty . \eeq

Note that
\begin{align*}
&E\left[\left(\underset{0\leq r\leq T}{{\rm sup}}\rho(X_{0,r}(x))J(X_{0,r}(x))\right)^2\right]\\ &\leq \left[E\left(\underset{0\leq r\leq T}{{\rm sup}}\left|\rho(X_{0,r}(x))\right|\right)^4\right]^{1/2}\left[E\left(\underset{0\leq r\leq T}{{\rm sup}}\left|J(X_{0,r}(x))\right|\right)^4\right]^{1/2}\\
&\leq C\left[E\left(\underset{0\leq r\leq T}{{\rm
sup}}\left|\rho(X_{0,r}(x))\right|\right)^4\right]^{1/2}.
\end{align*}
Therefore it is sufficient to prove that:
\begin{equation*}
\int_{\mathbb R^d}\frac{1}{\rho(x)}\left(E\left[\underset{t\leq
r\leq T}{{\rm
sup}}\left|\rho(X_{t,r}(x))\right|^4\right]\right)^{1/2}dx<\infty.
\end{equation*}
Since $\rho(x)\leq 1$, we have
\beqn
E\left[\underset{t\leq r\leq T}{{\rm sup}}\left|\rho(X_{t,r}(x))\right|^4\right]&\leq& E\left[\underset{t\leq r\leq T}{{\rm sup}}\left|\rho(X_{t,r}(x))\right|^4\textbf{1}_{\left\lbrace \underset{t\leq r\leq T}{{\rm sup}}\left|X_{t,r}(x)-x\right|\leq\frac{\left|x\right|}{2}\right\rbrace }\right]\\
& &+\P\left(\underset{t\leq r\leq T}{{\rm sup}}\left|X_{t,r}(x)-x\right|\geq\frac{\left|x\right|}{2}\right)\\
&=&:A(x)+B(x).
\eeqn
If $\underset{t\leq r\leq T}{{\rm sup}}\left|X_{t,r}(x)-x\right|\leq\frac{\left|x\right|}{2}$ then $\left|X_{t,r}(x)\right|\geq\frac{\left|x\right|}{2}$ and so $\left|\rho(X_{t,r}(x))\right|\leq \left(1+\frac{\lvert x \rvert}{2} \right)^{-p}$. Thus we have that $A(x)\leq \left(1+\frac{\lvert x \rvert}{2} \right)^{-4p}$ and so $\int_{\mathbb R^d} \left(1+\lvert x \rvert \right)^{p}A(x)^{1/2}dx<\infty$. On the other hand, if $\lvert x \rvert \geq 4\lVert b \rVert_{\infty}T$, then (the same argument as in the existence proof step 2 of Theorem 4 in \cite{BCEF} for the It\^o integral with respect to the Brownian motion; and see e.g. Theorem 5.2.9 in \cite{app} for the integral with respect to the compensated Poisson random measure) 
\begin{align*}
\label{large_deviation}
B(x) &\leq \P\left(\underset{t\leq s\leq T}{{\rm sup}}\left|\int_0^s\sigma(X_{0,r}(x))dW_r\right|\geq \frac{\lvert x \rvert}{8} \right)\\
&\hspace{0.3cm} + \P\left(\underset{t\leq s\leq T}{{\rm sup}}\left|\int_0^s\int_{\mathbb E}\beta(X_{0,r-}(x),e)\tilde \mu(dr,de)\right|\geq \frac{\lvert x \rvert}{8} \right)\\
&\leq C_1\ {\rm exp}(-C_2\lvert x \rvert^2)+C_3\ {\rm exp}(-C_4\lvert x \rvert)
\end{align*}
and so $\int_{\mathbb R^d} \left(1+\lvert x \rvert \right)^{p}B(x)^{1/2}dx<\infty$.

\end{proof}

\small{

}

\end{document}